\newcommand\PP{\mathbb P}
\newcommand\C{\mathbb C}
\newcommand\Q{\mathbb Q}
\newcommand\R{\mathbb R}
\newcommand\Z{\mathbb Z}
\newcommand\N{\mathbb N}
\newcommand{\X}{\mathcal{X}}
\newcommand{\LL}{\mathcal{L}}
\newcommand{\M}{\mathcal{M}}
\newcommand{\cN}{\mathcal{N}}
\newcommand{\T}{T}
\newcommand{\cZ}{\mathcal{Z}}
\newcommand{\fD}{\mathfrak{D}}
\newcommand{\fd}{\mathfrak{d}}
\newcommand{\eps}{\epsilon}
\newcommand\Aut{\operatorname{Aut}}
\newcommand\bra{\langle}
\newcommand\ket{\rangle}
\newcommand\res{\operatorname{Res}}
\newcommand{\Hom}{\operatorname{Hom}}
\newcommand{\spec}{\operatorname{Spec}}
\newcommand{\NE}{\operatorname{NE}}
\newcommand{\pic}{\operatorname{Pic}}
\newcommand{\jk}{\operatorname{JK}}
\newcommand{\opZ}{\operatorname{Z}}
\makeatletter \@addtoreset{equation}{section} \makeatother
\newtheorem{thm}{Theorem}[section]
\newtheorem{prop}[thm]{Proposition}
\newtheorem{lem}[thm]{Lemma}
\newtheorem{cor}[thm]{Corollary}
\theoremstyle{definition}
\newtheorem{definition}[thm]{Definition}
\newtheorem{rmk}[thm]{Remark}
\newcommand\narrowdots{\hbox to 1em{.\hss.\hss.}}
\title[Log CY surfaces and Jeffrey-Kirwan residues]{Log Calabi-Yau surfaces and Jeffrey-Kirwan residues}
\author{Riccardo Ontani}
\author{Jacopo Stoppa}
\email{rontani@sissa.it}
\email{jstoppa@sissa.it}
\address{SISSA, via Bonomea 265, 34136 Trieste, Italy} 
\address{Institute for Geometry and Physics (iGAP), Via Beirut 2-4, 34014 Trieste, Italy}
\begin{document}
	
	\begin{abstract} We prove an equality, predicted in the physical literature, between the Jeffrey-Kirwan residues of certain explicit meromorphic forms attached to a quiver without loops or oriented cycles and its Donaldson-Thomas type invariants.
	
	In the special case of complete bipartite quivers we also show independently, using scattering diagrams and theta functions, that the same Jeffrey-Kirwan residues are determined by the the Gross-Hacking-Keel mirror family to a log Calabi-Yau surface. 
	
		MSC2020: 14J33, 16G20, 81Q60.
	\end{abstract}
	
	\maketitle

	\setcounter{tocdepth}{1}
	\tableofcontents

\section{Introduction}

In this work we will study a class of Jeffrey-Kirwan residues associated with hyperplane arrangements \cite{brionvergne, szenesvergne}, i.e., suitable periods of explicit meromorphic forms. The particular periods we consider are suggested by the physical literature (especially \cite{beau, beht, cordova}), where they appear in the expression of important quantities for certain physical theories. 
	
	The relevant data for us are given by a quiver $Q$, without loops or oriented cycles, together with a dimension vector $d$. The gauge group is a product of unitary groups $\prod_{i \in Q_0} \operatorname{U}(d_i)$, modulo the diagonal $\operatorname{U}(1)$. The corresponding meromorphic form is defined on the torus $\mathbb{T}(d) = \left(\prod_{i\in Q_0}(\C^*)^{d_i}\right)/\C^*$, and is given explicitly by 
	\begin{align}\label{MeroForms}
		&\nonumber\opZ_Q(d) = \prod_{i\in Q_0}\prod^{d_i}_{s \neq s' = 1} \frac{u_{i,s'} - u_{i, s}}{u_{i, s} - u_{i,s'}-1} \\&\prod_{\{i\to j\} \in \bar{Q}_1}\prod^{d_i}_{s = 1}\prod^{d_j}_{s'=1}\left(\frac{u_{j,s'} - u_{i,s} + 1 - \frac{1}{2}R_{ij}}{u_{i,s} - u_{j,s'} + \frac{1}{2}R_{ij}}\right)^{\bra j, i\ket}  \bigwedge_{(i, s) \in Q_0 \times \{1,\ldots,d_i\} \setminus (\bar{i}, d_{\bar{i}})} du_{i,s},
	\end{align}
	for any fixed ordering of the pairs $(i, s) \in Q_0 \times \{1,\ldots,d_i\} \setminus (\bar{i}, d_{\bar{i}})$. Here $\bar{i} \in Q_0$ is a fixed reference vertex, and $\bar{Q}$, $\bra -, - \ket$ denote the reduced quiver and skew-symmetrised Euler form of $Q$, respectively. The omission of a variable $u_{\bar{i}, N_{\bar{i}}}$ corresponds to quotienting the gauge group by the diagonal $\operatorname{U}(1)$. The variables $R_{ij}$ denote (a priori) arbitrary parameters, known as $R$-charges. For each fixed real stability vector $\zeta = \{\zeta_i,\,i \in Q_0\}$, the physical computations of \cite{beht, cordova} construct a canonical cycle $C \subset \mathbb{T}(d)$, depending on $\zeta$ and $\opZ_Q(d)$, such that the quantity of physical interest can be computed as the period
	\begin{equation}\label{IntroPeriod} 
		\jk(\opZ_Q(d), \zeta) = \left(\frac{1}{2\pi {\rm i}}\right)^{\dim \mathbb{T}(d)}\int_C \opZ_Q(d).
	\end{equation}
	Mathematically, this can be understood as the conjectural identity 
	\begin{equation}\label{DTJK}
		\bar{\chi}_Q(d, \zeta) = \frac{1}{d!}\jk(\opZ_Q(d), \zeta),
	\end{equation} 
	where the left hand side denotes the generalised Donaldson-Thomas invariant of the quiver $Q$ (see e.g. \cite{js, ks}), and the right hand side is the JK (Jeffrey-Kirwan) residue studied in \cite{brionvergne, szenesvergne} and reviewed for our purposes in Section \ref{FlagsSec}. In particular, $\jk(\opZ_Q(d), \zeta)$ should not depend on $R_{ij}$.
	
	In the following, $Z$ denotes a complex valued linear form on $\Z Q_0$, the central charge, inducing the real stability vector $\zeta$. This can be lifted canonically to an element $\hat{Z}$ of the Lie algebra of $\mathbb{T}(d)$, given by $\hat{Z}_{i, s} = Z_i$ for $i \in Q_0$, $s = 1,\ldots, d_i$. Similarly, we write $\widehat{\zeta}$ for the corresponding lift of the real stability vector. Note that the lift $\widehat{\zeta}$ is used in the construction of the JK residue $\jk(\opZ_Q(d), \zeta)$. 
	
	\begin{rmk}\label{regularityRmk} An important point, which does not seem to be addressed in detail in the physical literature, is that the mathematical theory of Jeffrey-Kirwan residues \cite{brionvergne, szenesvergne} requires that the stability vector $\zeta$ satisfies a suitable \emph{regularity} condition with respect to the dimension vector $d$, i.e., the lift $\widehat{\zeta}$ should not lie in the hyperplane arrangement defined by $d$, namely $\{\cup_{i,s,s'} V(u_{i, s} - u_{i,s'})\big)\cup\big( \cup_{i,j,s,s'} V(u_{i,s} - u_{j,s'})\}$ (see Section \ref{FlagsSec} for the details). In this paper we always assume (and keep track of) this regularity condition. 
	\end{rmk} 
	
\subsection{Quivers} Part of our work is devoted to proving a version of the conjectural identity \eqref{DTJK}.
	
We work with \emph{abelianised JK residues}, introduced from physical considerations in \cite{beau}, in the limit of large $R$-charges, which we denote by $\jk^{\infty}_{ab}$. A physical argument (see \cite{beau}, Section 3.4) predicts that in fact we have
	\begin{equation*}
		\jk(\opZ_Q(d), \zeta) = \jk^{\infty}_{ab}(\opZ_Q(d), \zeta).
	\end{equation*} 
	This limitation to abelianised invariants is due to the fact that at present we are only able to prove a crucial iterated residues expansion, \eqref{JKIntro} below, in this case, although we expect that a similar result holds independently of abelianisation.
	
	Let $ Q$ be a quiver without loops or oriented cycles. The abelianised JK residue $\jk_{ab}(\opZ_{ Q}(d), \zeta_{ Q})$, with respect to a regular stability vector $\zeta_{Q}$, is defined in Section \ref{AbelianSec}, following \cite{beau}. It is given by a rational linear combination of the form $\sum_{Q' \in \mathcal{S}(d)} c_{Q'} \jk(\opZ_{Q'}(\mathbb{I}), \zeta_{Q'})$, for a suitable set of quivers $\mathcal{S}(d)$, where $\mathbb{I}$ denotes the full abelian dimension vector. In Sections \ref{SpanningTreeSec} and \ref{AbelianJKSec}, we will show that, for fixed $R$-charges $\bar{R} = \bar{R}_{ij}$ lying in a dense open cone, the JK residue $\jk(\opZ_{Q'}(\mathbb{I}), \zeta_{Q'})$ can be computed as a sum of contributions, one for each spanning tree $T$ of the reduced quiver $\bar{Q}'$, with fixed root $i_{Q'} \in Q'_0$. In brief, the abelianised JK residue can be computed as a linear combination of iterated residues $\operatorname{IR}_0$ (a notion recalled in Appendix \ref{iterResSec}),
	\begin{equation}\label{JKIntro}
		\frac{1}{d!}\jk_{ab}(\opZ_{ Q}(d), \zeta_{ Q}) = \sum_{Q' \in \mathcal{S}(d)} c_{Q'} \sum_{T\subset \bar{Q}'} \eps(T)\operatorname{IR}_{0} (  \phi_{x_T}(\opZ_{Q', \bar{R}}(\mathbb{I})(u))), 
	\end{equation}
	for certain $\eps(T) \in \{0, 1\}$, where $x_T$ denotes a singular point of the affine hyperplane arrangement defined by $(Q', i_{Q'}, \mathbb{I}, \bar{R})$, corresponding to a spanning tree $T$ under a natural bijection, and the map $\phi_{x_T}$ picks a specific identification of $\opZ_{Q', \bar{R}}(\mathbb{I})(u)$ near $x_T$ with a meromorphic function in a neighbourhood of $0 \in \C^{|Q'_0|-1}$ (see Proposition \ref{BETHAbelian} and Corollary \ref{BETHAbelianTrees}). This identity, valid for general $Q$ as above, seems to be interesting in its own right. Indeed, we use it in order to prove our first result, which relates abelianised JK residues to quiver invariants.
	\begin{thm}\label{QuiverChiTheorem} Let $Q$ be a quiver without loops or oriented cycles. Suppose $\widehat{\zeta}$ is regular with respect to the hyperplane arrangement defined by the dimension vector $d$. (This holds, for example, under the stronger condition that $d$ and $\zeta$ are coprime). Then, we have
		\begin{equation}\label{DTJKabelian}
			\frac{1}{d!}\jk^{\infty}_{ab}(\opZ_Q(d), \zeta) = \bar{\chi}_Q(d,\zeta).
		\end{equation}
	\end{thm}
	\begin{rmk} The regularity assumption is restrictive. However, as we explained in Remark \ref{regularityRmk}, this assumption is not a drawback of our approach, but rather is needed for the JK residue $\jk(\opZ_Q(d), \zeta)$ (and so $\jk^{\infty}_{ab}(\opZ_Q(d), \zeta)$) to be well defined, at least according to the existing mathematical literature. Clearly, an extension of the theory beyond this case is desirable.  
	\end{rmk}	
	
	\subsection{Log CY surfaces} The other main insight we take from the physical literature (in particular \cite{beniniS2, closset}) is the expectation that, quite independently of the relation to quiver invariants \eqref{DTJK}, the generating function  
	\begin{equation}\label{JKPartFunc}
		\sum_{d \in \Z_{\geq0}Q_0} \frac{1}{d!}\jk(\opZ_Q(d), \zeta) \prod_{i\in Q_0}\prod^{d_i}_{s = 1} e^{\hat{Z}_{i, s}} 
	\end{equation} 
could be computed in terms of the complex structure of a suitable family of Calabi-Yau manifolds $\mathcal{X} \to S$, such that $e^{\hat{Z}_{i, s}}$ provide coordinates on $S$, the base of the family; and at the same time that, after a well-defined change of variables, \eqref{JKPartFunc} might also encode certain interesting Gromov-Witten invariants. We emphasise that a priori our expectations are only based on a formal analogy with the more complicated Jeffrey-Kirwan generating functions appearing in \cite{beniniS2, closset}, and we do not prove here a direct link to the physical results of these works. 
	
	However, our second result, Theorem \ref{MainThm}, does confirm these expectations for a particular class of Jeffrey-Kirwan generating functions \eqref{JKPartFunc}. That is, we provide a family $\mathcal{X} \to S$ as above and explain precisely how its complex structure determines \eqref{JKPartFunc} in these cases; and how this also yields an interpretation of \eqref{JKPartFunc} in terms of Gromov-Witten theory. 
	
Our approach is based on the mirror symmetry construction for log Calabi-Yau surfaces proposed by Gross, Hacking and Keel\footnote{Here and in the rest of the paper, we reference the unabdriged version of \cite{ghk}, available as {\tt arXiv:1106.4977v1}.} \cite{ghk}. Thus, we consider pairs $(Y, D)$ where $Y$ is a smooth complex projective rational surface, endowed with an anticanonical cycle of rational curves 
	\begin{equation*}
		D = D_1 + \cdots + D_n, 
	\end{equation*}
	(i.e., Looijenga pairs). The complement $U = Y \setminus D$ is log Calabi-Yau: it is endowed with a holomorphic symplectic form $\Omega$, unique up to scaling, with simple poles at infinity.
	
	Suppose that the intersection matrix $(D_i \cdot D_j)$ is not negative semidefinite (i.e., we are in the \emph{positive case} of \cite{ghk}, Section 0.2). Then, by \cite{ghk}, Lemma 5.9, $U$ is affine. The GHK (Gross-Hacking-Keel) mirror to the log Calabi-Yau surface $(U, \Omega)$ is constructed as a family of affine surfaces 
	\begin{equation*}
		\X \to S = \spec \C[\NE(Y)],
	\end{equation*} 
	over the affine toric variety $\spec \C[\NE(Y)]$. The fibres of $\X \to S$ have log-canonical singularities, and a restriction of the family gives a smoothing of the special fibre 
	\begin{equation*}
		\X_0 \cong \mathbb{V}_n := \mathbb{A}^2_{x_1x_2} \cup \cdots \cup \mathbb{A}^2_{x_n x_1} \subset \mathbb{A}^n,
	\end{equation*}
	corresponding to the large complex structure limit. A crucial point is that the restriction to the structure torus,  
	\begin{equation*}
		T_Y = \pic(Y) \otimes \C^* = \spec \C[A_1(Y)] \subset S,
	\end{equation*}
	is a versal family of deformations of $(U, \Omega)$ as a log Calabi-Yau surface. A proof of the latter statement is announced in \cite{ghk}, Section 0.2, as well as in \cite{ghkModuli}, Section 1; recently, a proof has appeared in \cite{LaiZhou}. Thus, in particular, $(U, \Omega)$ appears as a fibre of its own mirror family. 
	
	In \cite{ghk}, Section 0.5, homological mirror symmetry conjectures are formulated for the induced family $\X \to T_Y$. As we discuss more precisely in Section \ref{GHKSec}, it is known that the complex parameters $T_Y$ correspond both to the periods of the holomorphic symplectic form $\Omega$ on the fibres $\X_s$ and to the choice of a complexified K\"ahler class $B + {\rm i}\omega$ on $U$, restricted from $Y$.
	
	Through a remarkable series of reductions (see \cite{ghk}, Sections 1-3), briefly recalled in Section \ref{GHKSec}, the germ of the family $\X \to S$ around $0 \in S$ is shown to be equivalent to the datum of a \emph{consistent scattering diagram} in $\R^2$,
	\begin{equation*}
		\bar{\fD} = \{(\fd, f_{\fd})\}.
	\end{equation*}
	A key step in the proof shows that $\bar{\fD}$ can be obtained as the (essentially unique) consistent completion of an initial, finite scattering diagram $\bar{\fD}_0$, which can be described explicitly. The process of consistent completion can be understood as eliminating all monodromy. Then, the consistent diagram $\bar{\fD}$ determines uniquely a family $\bar{\X}^o$, endowed with canonical regular functions 
	\begin{equation*}
		\bar{\vartheta}_q(t) = \overline{\operatorname{Lift}}_{t} (q),\,t \in \R^2, 
	\end{equation*}
	(defined using a sum over \emph{broken lines}), and the mirror family $\X \to S$, with its theta functions $\vartheta_q$, can be reconstructed from these. The functions $\bar{\vartheta}_q$ satisfy the fundamental identity\footnote{We call our variable $t$, rather than $Q$ as in \cite{ghk}, in order to avoid confusion with quivers.}
	\begin{equation}\label{MonodromyIntro}
		\bar{\vartheta}_q(t^+) = \theta_{\fd, \bar{\fD}}\left(\bar{\vartheta}_q(t^-)\right),  
	\end{equation}
	along each ray $\fd$, where $\theta_{\fd, \bar{\fD}}$ is the automorphism attached to the weight function $f_{\fd}$.  
	
	We can now describe our second result. It is only valid for the class of \emph{complete bipartite quivers} (see Sections \ref{AbelianSec}, \ref{GenMonSec2} and \cite{rw}, Section 5 for background on these). The relevant log Calabi-Yau surfaces have a toric model mapping to $\PP^2$, with boundary given by a triangle of lines $L_1 + L_2 + L_3$. On the other hand, the result does \emph{not} rely on any a priori correspondence between JK residues and quiver invariants (such as Theorem \ref{QuiverChiTheorem}, or the conjectural identity \eqref{DTJK}), but rather shows directly how the Jeffrey-Kirwan generating function is determined by the GHK family: see our outline of the proof in Section \ref{outline}. In brief, we show that the consistent scattering diagram $\bar{\fD}$ for the mirror family $\X \to S$ automatically knows about JK residues.  
	\begin{thm}\label{MainThm} Let $Q$ denote a quiver as above. Suppose $Q$ is complete bipartite, and let $\zeta$ denote a nontrivial compatible stability vector. Then, it is possible to construct an affine log Calabi-Yau surface $U = Y \setminus D$, depending only on $Q$, with toric model mapping to $(\PP^2, L_1 + L_2 + L_3)$, such that the following hold.
		\begin{enumerate}
			\item[$(i)$] The weight function $f_{\fd}$ in the consistent scattering diagram $\bar{\fD}$ for $U$ can be identified canonically with a sum over dimension vectors for $Q$, 
			\begin{equation*}
				f_{\fd} = \exp\left(\sum_{k>0} \sum_{|d| = k m_{\fd}} k c_d z^d\right),
			\end{equation*}  
			with 
			\begin{equation*}
				z^d = \exp\big(2\pi {\rm i} \int_{\tilde{\beta}(d)} \Omega\big),
			\end{equation*}
			for a class $\tilde{\beta}(d) \in H_2(\X_s, \Z)$, determined by the dimension vector $d$. Here, $|d|$ denotes the element of $\N^2$ induced by $d$ and $m_{\fd}$ is the primitive generator of $\fd$.
			\item[$(ii)$] Assume $\widehat{\zeta}$ is regular with respect to the hyperplane arrangement defined by $d$. Let $D$ denote the dimension of the moduli space of stable representations $\mathcal{M}^{\zeta-st}_d(Q)$. Then, in the canonical expansion above, we have
			\begin{align}
				c_d z^d &= (-1)^D \frac{1}{d!}\jk^{\infty}_{ab}(\opZ_Q(d), \zeta) \exp\big(2\pi {\rm i} \int_{\tilde{\beta}(d)} \Omega\big)\label{MainThmJK}\\
				&= N_{\beta(d)}(Y, D) \exp\big(2\pi {\rm i} \int_{ \beta(d)} [B + {\rm i}\omega]\big).\label{MainThmGW}
			\end{align}
			These identities hold \emph{independently} of the equalities relating JK residues to quiver invariants \eqref{DTJK}, \eqref{DTJKabelian}. Here, 
			\begin{align*}
				N_{\beta(d)}(Y, D)= \int_{[\overline{\mathfrak{M}}((\tilde{Y})^{o}/C^o,\beta(d))]^{\rm vir}}1
			\end{align*}
			denotes a relative genus $0$ Gromov-Witten invariant computed on a blowup $\pi\!:\tilde{Y}  \to Y $, with exceptional divisor $C$, with respect to a degree $\beta(d) \in H_2(\tilde{Y}, \Z)$, where $\pi$ and $\beta(d)$ are determined by $d$.
		\end{enumerate}  
	\end{thm}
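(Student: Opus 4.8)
The plan is to realise $(Y,D)$ by an explicit blowup of $(\PP^2,L_1+L_2+L_3)$, to run the GHK reconstruction of Section \ref{GHKSec} to obtain the consistent scattering diagram $\bar{\fD}$, to read off its ray functions from a perturbation of the finite initial diagram $\bar{\fD}_0$ as sums over scattering trees, and finally to match those sums with the iterated residue expansion \eqref{JKIntro}. Write $Q$ with left vertices $i_1,\dots,i_{n_1}$, right vertices $j_1,\dots,j_{n_2}$ and one arrow $i_a\to j_b$ for all $a,b$, and let $\pi\colon Y\to\PP^2$ be the blowup at $n_1$ general interior points of $L_1$ and $n_2$ general interior points of $L_2$, with exceptional curves $E_1,\dots,E_{n_1}$ over $L_1$ and $F_1,\dots,F_{n_2}$ over $L_2$; take $D$ to be the cycle of strict transforms $\tilde L_1+\tilde L_2+\tilde L_3$. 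One checks that $-K_Y=D$, that $D$ is a triangle of rational curves, and that $(D_i\cdot D_j)$ has $\tilde L_3^2=1>0$, hence is not negative semidefinite; so by \cite{ghk}, Lemma 5.9, $U=Y\setminus D$ is affine and $\pi$ is the required toric model over $(\PP^2,L_1+L_2+L_3)$. All of this depends only on $Q$. To a dimension vector $d$ one then attaches, combinatorially, a monomial $z^d\in\C[\NE(Y)]$ — tracking the exceptional curves $E_a$, $F_b$ with the multiplicities $d_{i_a}$, $d_{j_b}$ — and, via the GHK period identification of the $T_Y$-parameters with periods of $\Omega$ on a mirror fibre $\X_s$, a class $\tilde\beta(d)\in H_2(\X_s,\Z)$ with $z^d=\exp(2\pi{\rm i}\int_{\tilde\beta(d)}\Omega)$.

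\textbf{Proof of $(i)$.} By the description of the scattering diagram of a toric model (\cite{ghk}, Sections 1--3, and Section \ref{GHKSec}), the initial diagram $\bar{\fD}_0$ consists of $n_1$ parallel walls carrying the weights $1+z^{[E_a]}$, of $n_2$ parallel walls in a transverse direction carrying $1+z^{[F_b]}$, and of nothing over $L_3$; and $\bar{\fD}$ is its essentially unique consistent completion, characterised by \eqref{MonodromyIntro}. On a ray $\fd$ the curve classes contributing to $\log f_{\fd}$ are exactly those whose ``boundary direction'' is a positive multiple of $m_{\fd}$; writing such a class as carrying $E_a$ with multiplicity $p_a$ and $F_b$ with multiplicity $q_b$ and identifying it with the dimension vector $d$ given by $d_{i_a}=p_a$, $d_{j_b}=q_b$ — here \emph{completeness} of $Q$ is used, so that every such $d$ occurs — one has $|d|=(\sum_a p_a,\sum_b q_b)\in\N^2$ proportional to $m_{\fd}$, and $f_{\fd}=\exp(\sum_{k>0}\sum_{|d|=km_{\fd}}kc_dz^d)$ with $z^d$ as above. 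This is $(i)$; there remain the rational numbers $c_d$ to compute.

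\textbf{Proof of $(ii)$, the identity \eqref{MainThmJK}.} The plan is to compute $c_d$ by perturbing the $n_1+n_2$ initial walls into general position, so that the consistent completion is assembled from successive two-wall collisions; by the change of lattice / perturbation technique this expresses $c_d$ as a finite sum over trivalent \emph{scattering trees} whose leaves are labelled by the copies $(i,s)$, $s\le d_i$, each tree contributing a product over its internal edges of elementary commutator pairings together with a multiple-cover factor, a tree being retained exactly when its flow is compatible with the chamber of $\zeta$. On the other side, by \eqref{JKIntro} (Proposition \ref{BETHAbelian} and Corollary \ref{BETHAbelianTrees}), $\frac1{d!}\jk^{\infty}_{ab}(\opZ_Q(d),\zeta)$ is a sum over the abelianisations $Q'\in\mathcal{S}(d)$ and over rooted spanning trees $T\subset\bar Q'$, with weight $\eps(T)\operatorname{IR}_0(\phi_{x_T}(\opZ_{Q',\bar R}(\mathbb I)(u)))$. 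The core of the argument is to match the two sums term by term: $(a)$ a bijection between the singular points $x_T$ of the affine arrangement of $(Q',i_{Q'},\mathbb I,\bar R)$ — equivalently the $T$ with $\eps(T)=1$ — and the scattering trees retained for $\zeta$, under which the sum over $Q'\in\mathcal{S}(d)$ matches the deformation-to-distinct-points and multiple-cover reorganisation of the scattering; $(b)$ the evaluation of $\operatorname{IR}_0(\phi_{x_T}(\opZ_{Q',\bar R}(\mathbb I)(u)))$ as a product over the edges of $T$, each edge contributing the pairing factor read off the exponent $\bra j,i\ket$ in \eqref{MeroForms}, equal to the factor of the corresponding internal edge of the matching scattering tree; and $(c)$ reconciling the signs $\eps(T)$ and the orientation of $T$ with the chamber-compatibility of scattering flows and with the overall sign $(-1)^D$, where $D=\dim\mathcal{M}^{\zeta-st}_d(Q)=1-\bra d,d\ket$ and regularity of $\widehat{\zeta}$ (Remark \ref{regularityRmk}) makes the JK residue, the dimension $D$ and the coefficients $\eps(T)\in\{0,1\}$ unambiguous. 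I expect $(a)$--$(c)$ to be the main obstacle: it amounts to proving that the GHK consistent scattering diagram of $U$ coincides with the ``JK scattering diagram'' assembled from iterated residues — an identification of two a priori quite different tree sums, whose agreement is precisely the content of \eqref{MainThmJK}.

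\textbf{Proof of $(ii)$, the identity \eqref{MainThmGW}.} For the second equality the plan is to invoke the Gromov--Witten reading of a consistent scattering diagram of this toric type: the same perturbation that produced the scattering-tree expansion of $c_d$ is, by the relative Gromov--Witten correspondence underlying the Gromov--Witten/quiver correspondence for complete bipartite quivers (cf. \cite{rw}, and \cite{ghk}, Section \ref{GHKSec}), a count of $\mathbb{A}^1$-curves on a one-point blowup $\pi\colon\tilde Y\to Y$ with exceptional divisor $C$; this identifies $c_d$ with $N_{\beta(d)}(Y,D)=\int_{[\overline{\mathfrak{M}}((\tilde{Y})^{o}/C^o,\beta(d))]^{\rm vir}}1$ for the class $\beta(d)\in H_2(\tilde Y,\Z)$ determined by $d$, while the period $\exp(2\pi{\rm i}\int_{\tilde\beta(d)}\Omega)$ is matched with $\exp(2\pi{\rm i}\int_{\beta(d)}[B+{\rm i}\omega])$ through the identification, recalled in Section \ref{GHKSec}, of the $T_Y$-parameters with complexified K\"ahler classes on $U$. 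Since \eqref{MainThmJK} and \eqref{MainThmGW} both compute the single coefficient $c_d$ of $\bar{\fD}$, neither identity uses the conjectural correspondences \eqref{DTJK}, \eqref{DTJKabelian}.
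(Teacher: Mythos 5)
Your construction of $(Y,D)$ by blowing up $n_1$ points on $L_1$ and $n_2$ points on $L_2$ of $\PP^2$, and your derivation of part $(i)$ from the GHK consistent completion of the initial diagram, agree with what the paper does in Section~\ref{GenMonSec2}. Note however that the paper gets $(i)$ and \eqref{MainThmGW} essentially for free from the GHK identity $\operatorname{Scatter}(\bar{\fD}'_0)=\nu_*((\mathfrak{D}')^{\rm can})$ (see \eqref{canonicalDiag}, \eqref{tropVertex}, \eqref{MainProof(i)}), whereas you route \eqref{MainThmGW} through a separate GW/quiver correspondence that the paper does not need at this stage. The real problem is \eqref{MainThmJK}: you set up a plan to match, term by term, the Gross--Pandharipande--Siebert perturbative scattering-tree expansion of $c_d$ against the iterated-residue expansion \eqref{JKIntro} of $\jk^\infty_{ab}$, and you then state explicitly that your steps $(a)$--$(c)$ are ``the main obstacle.'' Since the identification of these two tree sums \emph{is} \eqref{MainThmJK}, deferring it leaves the theorem unproved. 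There is also no a priori bijection between the trivalent scattering trees produced by generic wall perturbation and the (generally non-trivalent) rooted spanning trees $T\subset\bar Q'$ indexing the JK singular points $x_T$, nor an obvious reason the multiple-cover factors on one side should reproduce $\operatorname{IR}_0(\phi_{x_T}(\opZ_{Q',\bar R}(\mathbb I)(u)))$ on the other; these would need to be established.

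The paper avoids this matching entirely. After reducing \eqref{MainThmJK} to the identity \eqref{JKresW}, it proceeds through a different chain: the degeneration formula \eqref{GWdegeneration} and the GW/quiver identity \eqref{DTGWrel} of \cite{rsw} express $c_d=N[(P_1,P_2)]$ in terms of the abelianised quiver invariants $\bar\chi_{\cN}(d(k^1,k^2),\widehat{\zeta}^*)$ (see \eqref{GWBipartiteCorr}); the isomonodromic flat connection framework of \cite{fgs} — the analytic continuation of the flat sections $\hat\vartheta$ across critical rays and the branch-cut jump \eqref{linearForms} — realises each such $\bar\chi_{\cN}$ as a sum over rooted trees of $\eps(T)\operatorname{IR}_0(\phi(\operatorname{W}_T(w)))$ \eqref{linearForms2}, with $\eps(T)=\bar\chi_T(d(k^1,k^2),\widehat{\zeta}^*)$ by Proposition~\ref{GHKTreeContribution}; Proposition~\ref{SSTProp} shows that for abelian tree representations regularity forces $\eps(T)\in\{0,1\}$ to be the stable-tree indicator; and Corollary~\ref{BETHAbelianTrees} together with the large-$R$-charge limit \eqref{LargeRResidue} identifies these iterated residues with those of the abelianised one-loop factor, closing the loop to $\jk^\infty_{ab}$. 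Your plan, if carried out, would be a conceptually appealing alternative that never passes through quiver invariants; but carrying it out would essentially require proving Corollary~\ref{BETHAbelianTrees} directly against the GPS scattering-tree sum, which is the content you have not supplied. If you want to pursue this route, the place to start is to make the bijection in your step $(a)$ precise and to verify $(b)$ on small examples (say $K(1,2)$, $K(2,2)$), where both sums are finite and explicitly computable.
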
 
	Note that, according to \cite{ghk}, the invariant $N_{\beta}(Y, D)$ should be thought of as enumerating suitable holomorphic discs in the log Calabi-Yau $U$ (see \cite{brini} for recent related results). 
	\begin{rmk}
		The main reason for the limitation to bipartite quivers is our use of some of the results in \cite{rsw, rw}, which are only proved in this case. It should be possible, though highly nontrivial, to extend the methods of this paper to a larger class of quivers and log Calabi-Yau surfaces, by relying on the recent results of \cite{brini}. We also expect that similar results hold in a refined setting, for the quantum mirrors of log Calabi-Yau surfaces studied in \cite{bousseauThesis, bousseauQuantum}.
	\end{rmk}
	From our perspective, the crucial point is proving the first identity \eqref{MainThmJK} in Theorem \ref{MainThm} $(ii)$. From this, the equality with Gromov-Witten invariants \eqref{MainThmGW}, as well as the following result, follow quite easily by known correspondences.  
	\begin{cor}[Alternative proof of Theorem \ref{QuiverChiTheorem} for bipartite quivers]\label{MainCor} Suppose $Q$ is complete bipartite, and $\widehat{\zeta}$ is regular with respect to the hyperplane arrangement defined by $d$. Then, we have
		\begin{equation*}
			\bar{\chi}_Q(d, \zeta) = (-1)^Dc_d = \frac{1}{d!}\jk^{\infty}_{ab}(\opZ_Q(d), \zeta).
		\end{equation*}
	\end{cor}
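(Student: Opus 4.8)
The plan is to obtain Corollary~\ref{MainCor} directly from Theorem~\ref{MainThm}$(ii)$, combined with the already established Gromov-Witten/quiver correspondence for complete bipartite quivers; no new analytic input is needed.

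The second equality in the statement, $(-1)^D c_d = \frac{1}{d!}\jk^{\infty}_{ab}(\opZ_Q(d),\zeta)$, is immediate. By Theorem~\ref{MainThm}$(i)$ one has $z^d = \exp\big(2\pi{\rm i}\int_{\tilde{\beta}(d)}\Omega\big)$, which is a nonzero monomial in the (integral) coefficient ring in which the wall functions $f_{\fd}$ are written, hence a non-zero-divisor; cancelling it from both sides of \eqref{MainThmJK} and using $(-1)^{2D}=1$ gives the claimed equality at once.

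For the first equality I would pass through the Gromov-Witten side \eqref{MainThmGW}. The left-hand sides of \eqref{MainThmJK} and \eqref{MainThmGW} are the same quantity $c_d z^d$; equating the right-hand sides, and invoking the canonical identification recalled in Section~\ref{GHKSec} — under which the complex parameters $T_Y$ simultaneously record the periods of $\Omega$ on the mirror fibres and the complexified K\"ahler class $B+{\rm i}\omega$ restricted from $Y$, so that the monomials $\exp\big(2\pi{\rm i}\int_{\tilde{\beta}(d)}\Omega\big)$ and $\exp\big(2\pi{\rm i}\int_{\beta(d)}[B+{\rm i}\omega]\big)$ coincide — one obtains $c_d = N_{\beta(d)}(Y,D)$. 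It remains to identify the relative genus $0$ invariant $N_{\beta(d)}(Y,D)$ of the log Calabi-Yau surface attached to the complete bipartite quiver $Q$ with the quiver invariant. This is exactly the content of the results of \cite{rw, rsw}: under the dictionary matching the blowup $\pi\colon\tilde{Y}\to Y$ and the curve class $\beta(d)$ with the dimension vector $d$, and the chamber of $\zeta$ with the corresponding stability data, they give $N_{\beta(d)}(Y,D) = (-1)^D\bar{\chi}_Q(d,\zeta)$, where $D = \dim\mathcal{M}^{\zeta-st}_d(Q)$ and the sign reconciles the normalisation of the enumerative invariant with the Donaldson-Thomas normalisation of $\bar{\chi}_Q$. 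Combining, $\bar{\chi}_Q(d,\zeta) = (-1)^D N_{\beta(d)}(Y,D) = (-1)^D c_d = \frac{1}{d!}\jk^{\infty}_{ab}(\opZ_Q(d),\zeta)$, which is the assertion; and, as promised, this re-proves Theorem~\ref{QuiverChiTheorem} in the bipartite case without using \eqref{DTJK} or \eqref{DTJKabelian}.

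Since Theorem~\ref{MainThm} carries essentially all the weight, the Corollary is a matter of bookkeeping, and I do not expect a serious obstacle. The two points requiring care are: that the period/K\"ahler-parameter identification of Section~\ref{GHKSec} holds on the nose for the specific classes $\tilde{\beta}(d)$, $\beta(d)$ produced by $d$, so that the monomials in \eqref{MainThmJK} and \eqref{MainThmGW} are literally equal rather than equal only after a mirror map; and that \cite{rw, rsw} can be cited in precisely the form above, with the correct sign and for the regular-$\widehat{\zeta}$ chamber. Should the cited correspondence be available only for a dense open set of stability vectors, one notes that $c_d$, $N_{\beta(d)}(Y,D)$ and $\jk^{\infty}_{ab}(\opZ_Q(d),\zeta)$ are all constant on the chamber of $\zeta$ — manifestly so under the regularity hypothesis of Remark~\ref{regularityRmk} — so the identity propagates to every regular $\widehat{\zeta}$.
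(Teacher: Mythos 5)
Your proposal is correct and follows essentially the same route as the paper: read off $c_d = N[(P_1,P_2)]$ and $(-1)^D c_d = \frac{1}{d!}\jk^{\infty}_{ab}(\opZ_Q(d),\zeta)$ from the two displays of Theorem~\ref{MainThm}$(ii)$, then invoke the refined GW/Kronecker correspondence \eqref{GWKron} to convert the Gromov-Witten invariant into $(-1)^{(P_1,P_2)}\bar{\chi}_Q(d,\zeta)$, with $(-1)^{(P_1,P_2)}=(-1)^D$. Your caveats about matching the monomials on the nose and about propagating the identity across the chamber are sensible bookkeeping remarks but do not change the substance, which coincides with the paper's own two-line argument.
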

\begin{rmk}\label{ScatteringRmk} Let us summarise the various logical implications: Theorem \ref{MainThm} shows that the JK residues $\jk^{\infty}_{ab}(\opZ_Q(d), \zeta)$ appear naturally as certain coefficients of the consistent scattering diagram $\bar{\fD}$, at least for complete bipartite quivers, without assuming a priori the identities between JK residues and quiver invariants \eqref{DTJK}, \eqref{DTJKabelian}, but relying instead on the intrinsic features of the scattering diagram (i.e. its theta functions $\bar{\vartheta}_q$, see Section \ref{outline}). The quiver invariants $\bar{\chi}_Q(d,\zeta)$ can then be recovered through known correspondences between such invariants and certain complete scattering diagrams, as in Corollary \ref{MainCor}.

Conversely, one can take as starting point a result characterising some class of complete scattering diagrams in terms of the invariants of acyclic quivers $\bar{\chi}_Q(d,\zeta)$, such as \cite[Theorem 1.5]{BridgelandScattering}. Then, according to Theorem \ref{QuiverChiTheorem}, certain coefficients of these complete scattering diagrams may also be expressed as residues $\jk^{\infty}_{ab}(\opZ_Q(d), \zeta)$ (a priori, the coefficients corresponding to $d$ for which $\widehat{\zeta}$ is regular, see Remark \ref{regularityRmk}); and it seems interesting to ask if this also reflects some formula for the theta functions of such diagrams.        	
\end{rmk}	
	Theorem \ref{MainThm} $(ii)$ makes precise the expectation that the JK generating function \eqref{JKPartFunc}
	can be computed in terms of the family of complex structures on a Calabi-Yau manifold: up to the identification
	\begin{equation*}
		\prod_{i\in Q_0}\prod^{d_i}_{s = 1} e^{\hat{Z}_{i, s}} = \exp\big(2\pi {\rm i} \int_{\tilde{\beta}(d)} \Omega\big),
	\end{equation*}
	(which is possible, by versality), it is determined by the scattering diagram $\bar{\fD}$, and so by the datum of the GHK mirror family around the large complex structure limit $\X_0$. At the same time it is also computed by the (open) genus zero Gromov-Witten theory of $(U, \Omega)$. The relevant change of variable is given by the mirror map since, according to \cite{ghk} (and, more generally, by a result of Ruddat and Siebert, see \cite{ruddat}), the mirror map for the GHK family around $\X_0$ is given by
	\begin{equation*}
		s = \exp(2\pi {\rm i}[B+{\rm i}\omega]).   
	\end{equation*}
	\subsection{Outline of the proofs}\label{outline} The basic ingredient for Theorem \ref{QuiverChiTheorem} is the expansion for JK invariants \eqref{JKIntro}, which we establish in Sections \ref{SpanningTreeSec} and \ref{AbelianJKSec}, after providing some necessary background in Sections \ref{FlagsSec} and \ref{JKquivSec}. The proof of Theorem \ref{QuiverChiTheorem} is completed in Section \ref{QuiverChiProofSec}.
	
	We outline the proof of the first identity \eqref{MainThmJK} in Theorem \ref{MainThm} $(ii)$. On the log Calabi-Yau side, the main difficulty is understanding why an expansion such as \eqref{JKIntro}, involving iterated residues, would appear in the coefficients of the consistent scattering diagram $\bar{\fD}$ in $\R^2$, independently of \eqref{DTJK} and \eqref{DTJKabelian}.  
	
	The key observation is that the regular functions $\bar{\vartheta}_q$ satisfy the formal property \eqref{MonodromyIntro} which characterises the flat sections of a meromorphic connection $\nabla_{\bar{\fD}}$, defined on $\PP^1 = \C^* \cup \{0\} \cup \{\infty\}$, with singularities at $0$ and $\infty$, and with a suitable structure group containing the automorphisms $\theta_{\fd, \bar{\fD}}$. The automorphisms $\theta_{\fd, \bar{\fD}}$ then appear as the generalised monodromy of $\nabla_{\bar{\fD}}$ at $0$. Here we use the standard identification of $\C$ with the real vector space $\R^2$ containing $\bar{\fD}$. This observation is by no means new: in fact, it appears already in the work of Gross, Hacking and Keel, see in particular the discussion on p. 27 and in Section 5.3 of \cite{ghk}. Indeed, flat connections of this type have been constructed and studied in the literature, for various structure groups, see in particular \cite{bt_stab, fgs, gmn}. The reference \cite{fgs}, which we will follow, is closest to the setup of \cite{ghk}. 
	
	Let $Q$ be a complete bipartite quiver, with nontrivial compatible stability vector $\zeta_{Q}$. In Sections \ref{GenMonSec} and \ref{GenMonSec2}, we define a log Calabi-Yau surface $U$, with toric model mapping to $(\PP^2, L_1 + L_2 + L_3)$, whose deformation type depends only on $Q$. Let $\bar{\fD}_0$, $\bar{\fD}$ denote the corresponding initial scattering diagram in $\R^2$ and its consistent completion. We construct a meromorphic connection $\nabla_{\bar{\fD}_0}$ on $\PP^1$, with monodromy corresponding to $\bar{\fD}_0$, together with a distinguished basis of flat sections $\hat{\vartheta}_q$. Moreover, we show that $\nabla_{\bar{\fD}_0}$, $\nabla_{\bar{\fD}}$ fit into a holomorphic family $\nabla$ of meromorphic connections, with constant generalised monodromy. Therefore, the flat sections $\hat{\vartheta}_q$ can be analytically continued to a basis of flat sections for $\nabla_{\bar{\fD}}$. Note that $\hat{\vartheta}_q$, $\bar{\vartheta}_q$ are different. But, by construction, the sections $\hat{\vartheta}_q$ still satisfy
	\begin{equation*} 
		\hat{\vartheta}_q(t^+) = \theta_{\fd, \bar{\fD}}\big(\hat{\vartheta}_q(t^-)\big),  
	\end{equation*}
	so the weight functions $f_{\fd}$ can be computed using the $\hat{\vartheta}_q$. Thus, the process of consistent completion from $\bar{\fD}_0$ to $\bar{\fD}$ can be understood as a process of analytic continuation from $\nabla_{\bar{\fD}_0}$ to $\nabla_{\bar{\fD}}$.
	
	Making this process explicit leads to the expression for $c_d z^{d}$ in terms of iterated residues, appearing in Theorem \ref{MainThm} $(ii)$. More precisely, recall the expansion \eqref{JKIntro} for JK residues is expressed as a sum over quivers $Q' \in \mathcal{S}(d)$. For each spanning tree $T \subset \bar{Q}'$, we introduce a meromorphic function $\operatorname{W}_T$ given by 
	\begin{equation*}
		\operatorname{W}_T(w) = \prod_{\{i \to j\} \in T_1}   \frac{w_i}{w_j} \frac{\bra i, j\ket}{w_j - w_i}.
	\end{equation*}
	Then, we show that $\hat{\vartheta}_q$, as a section of $\nabla_{\bar{\fD}_0}$, has an explicit expression in terms of \emph{iterated integrals} of $\operatorname{W}_T$, of the form
	\begin{equation*}
		\sum_d  \bra q, d\ket z^d \sum_{Q' \in \mathcal{S}(d)} c_{Q'} \sum_{T\subset \bar{Q}'}  \eps(T) \int_{C_T} \operatorname{W}_T(w)
	\end{equation*}
	(see \eqref{explicitY} and \eqref{ExplicitYabelian}). Such formulae in terms of iterated integrals are rather standard for inverse monodromy problems. As a consequence, we find an expression for the analytic continuation of $\hat{\vartheta}_q$ in terms of \emph{iterated residues} of $\operatorname{W}_T$, which leads to the identity
	\begin{align*} 
		c_d z^d &= \sum_{Q' \in \mathcal{S}(d)} c_{Q'} \sum_{T\subset \bar{Q}'}  \eps(T)\operatorname{IR}_{0}(\phi(\operatorname{W}_T(w))) \exp\big(2\pi {\rm i} \int_{\tilde{\beta}(d)} \Omega\big). 
	\end{align*} 
	Here, $\operatorname{W}_T(w)$ is singular along the hyperplane arrangement defined by $(T, i_{Q'}, \mathbb{I})$, and $\phi$ fixes an identification of $\operatorname{W}_T(w)$ with a meromorphic function in a neighbourhood of $0 \in \C^{|Q'_0|-1}$. Finally, in Section \ref{CompletionSec}, we take the limit of large $R$-charges, and show
	\begin{equation}\label{LargeRIntro}
		\lim_{\lambda \to +\infty} \operatorname{IR}_{0} (  \phi_{x_T}(\opZ_{Q', \lambda\bar{R}}(\mathbb{I})(u)))  = \operatorname{IR}_{0}(\phi(\operatorname{W}_T(w ))) .
	\end{equation}
	By \eqref{JKIntro} this will complete the proof of the first equality \eqref{MainThmJK} in Theorem \ref{MainThm} $(ii)$.
	\begin{rmk} The references \cite{chan, leung} show how to compute the consistent completion $\bar{\fD}$ using a solution $\Phi_{\bar{\fD}_0}$ of the Maurer-Cartan equation, in a dgLa of smooth forms, constructed from the initial diagram $\bar{\fD}_0$. The functions $\bar{\vartheta}_q$ also appear there as (limits of) flat sections for the deformed differential $d_{\Phi_{\bar{\fD}_0}} = d + [\Phi_{\bar{\fD}_0},\,-\,]$ (see in particular \cite{leung} Theorem 3.15). Perhaps, this could be though of as a ``de Rham model" which computes $\bar{\fD}$ in the smooth category, as opposed to a ``Betti model" using broken lines (which are affine linear and combinatorial objects), and a ``Dolbeault model" using the holomorphic object $\nabla$. It should be possible to prove Theorem \eqref{MainThm} (and, perhaps, more general results) in this ``de Rham model".
	\end{rmk}
	
	\noindent{\textbf{Acknowledgements.}} We thank G. Borot, V. Fantini, S. A. Filippini, N. Sibilla and R. P. Thomas for some discussions related to the present paper. We are also grateful to the anonymous Referees    for several suggestions on how to improve the paper, and in particular we thank one of the Referees for a comment leading to our Remark \ref{ScatteringRmk}. 
	
	\section{JK residues and flags}\label{FlagsSec}
	
	In this Section we introduce Jeffrey-Kirwan residues of hyperplane arrangements through their characterisation in terms of flags, started in \cite{brionvergne} and studied systematically in \cite{szenesvergne}. 
	
	Given an $n$-dimensional real linear space $\mathfrak{a}$ with a fixed full rank lattice $\Gamma$, we consider the duals $V := \mathfrak{a}^\ast$, $\Gamma^\ast \subseteq V$. Fix a finite set of generators $\mathfrak{A}$ for $\Gamma^\ast$, and suppose it is projective (i.e., contained in a strict half-space of $V$). We also fix elements $f_1, \ldots, f_n \in \mathfrak{A}$ giving an ordered basis of $V$, and the top form on $\mathfrak{a}$ defined as $d\mu := f_1 \wedge \cdots \wedge f_n$. Given a subset $S \subseteq V$, we will denote by $\mathfrak{B}(S)$ the set of all distinct bases of $V$ consisting of elements of $S$.
	\begin{definition}
		Let $x \in \mathfrak{a}_\mathbb{C}$. We say that $x$ is regular (with respect to $\mathfrak{A}$) if it is not contained in the union of hyperplanes
		\begin{align*}
			\bigcup_{f \in \mathfrak{A}\otimes_\mathbb{R}\mathbb{C}}  V(f) \subset \mathfrak{a}_\mathbb{C}.
		\end{align*}
		The set of regular points of $\mathfrak{a}_\mathbb{C}$ is a dense open cone, denoted by $\mathfrak{a}_\mathbb{C}^{reg}$.
	\end{definition}
	We also need to introduce the dual notion. 
	\begin{definition}
		Let $S \subseteq V$ be a finite set and let $\zeta \in V$. We say that $\zeta$ is $S$-regular if
		\begin{align*}
			\zeta \notin \bigcup_{\substack{I \subseteq S\\ | I | = n-1}} \text{Span}_\mathbb{R}(I).
		\end{align*}
		In particular, we will say that $\zeta$ is \textit{regular} (and write $\zeta \in V^{reg}$) if it is $\mathfrak{A}$-regular. The connected components of the dense open cone $V^{reg}$ are called ($\mathfrak{A}$-)\textit{chambers of $V$}. Moreover, denoting by $\Sigma\mathfrak{A}$ the set of all sums of distinct elements of $\mathfrak{A}$, we say that $\zeta$ is \textit{sum-regular} if it is $\Sigma \mathfrak{A}$-regular. 
	\end{definition}
	The classical Jeffrey-Kirwan residue associated with a hyperplane arrangement (see \cite{brionvergne}), which we denote here with $\text{J-K}_{\xi}^\mathfrak{A}$, is a $\mathbb{C}$-linear functional, depending on a chamber $\xi$, defined on the linear space of germs of meromorphic functions $f$ which are regular in a neighbourhood of the origin in $\mathfrak{a}_\mathbb{C}^{reg}$. 
	\begin{rmk} Following \cite{szenesvergne} we fix $d\mu$ and work with functions $f$ rather than forms $f d\mu$. It turns out that the relevant residue operations do not depend on the fixed choice of $d\mu$, see Remark \ref{orientationRmk}. To make contact with the Introduction, one should choose the constant holomorphic volume form corresponding to the real volume form $d\mu$.
	\end{rmk}
	The original definition of $\text{J-K}_{\xi}^\mathfrak{A}$, which we do not recall here, involves a formal Laplace transform. According to \cite{szenesvergne}, Lemma 2.2, there is a cycle $C$ in $\mathfrak{a}_\mathbb{C}^{reg}$, depending on $\xi$, such that 
	\begin{equation*}
		\text{J-K}_{\xi}^\mathfrak{A}(f) = \left(\frac{1}{2\pi {\rm i}}\right)^{n}\int_C f d\mu
	\end{equation*} 
	for all meromorphic functions $f$ which are regular in a neighbourhood of the origin in $\mathfrak{a}_\mathbb{C}^{reg}$ (cf. \eqref{IntroPeriod}).  
	\begin{definition} The (finite) set $\mathcal{FL}(\mathfrak{A})$ is given by flags of $V$ of the form
		\begin{align*}
			\mathfrak{F} = [\lbrace 0 \rbrace = F_0 \subset F_1 \subset \ldots \subset F_n = V] 
		\end{align*}
		such that, for all $j=0, \ldots, n$, $\mathfrak{A}$ contains a basis of $F_j$ and $\text{dim}_\mathbb{R}F_j = j$. 	\end{definition}	
	
	\begin{lem}[\cite{szenesvergne} p. 11]\label{basisLem} Let $\mathfrak{F} \in \mathcal{FL}(\mathfrak{A})$. Then, there exists a basis $\gamma^\mathfrak{F}$ of $V$ such that the following conditions hold.
		\begin{enumerate}
			\item[$(i)$] $\gamma^\mathfrak{F} \subseteq \mathbb{Q} \cdot \Gamma^\ast$.
			\item[$(ii)$] $\left\lbrace	\gamma^\mathfrak{F}_j \right\rbrace_{j=1}^m$ is a basis of $F_m$, for $m=0, \ldots, n$.
			\item[$(iii)$] $\gamma^\mathfrak{F}_1 \wedge \ldots \wedge \gamma^\mathfrak{F}_n = d\mu$.
		\end{enumerate}
	\end{lem}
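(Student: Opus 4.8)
The plan is to construct the required basis by an explicit inductive procedure applied to the flag, and then to correct it by a single rescaling in order to arrange the normalisation $(iii)$.

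First I would build vectors $v_1, \ldots, v_n$ one at a time. For each $m = 1, \ldots, n$, the set $\mathfrak{A}$ contains a basis of $F_m$; since $\dim F_{m-1} = m - 1 < m = \dim F_m$, not every element of that basis can lie in $F_{m-1}$, so there is an element $v_m \in \mathfrak{A} \cap F_m$ with $v_m \notin F_{m-1}$. I claim $\{v_1, \ldots, v_m\}$ is a basis of $F_m$ for every $m$. Arguing by induction, $\{v_1, \ldots, v_{m-1}\}$ is a basis of $F_{m-1}$ by hypothesis, and adjoining $v_m \notin F_{m-1} = \operatorname{Span}(v_1,\ldots,v_{m-1})$ keeps the set linearly independent; thus it is an $m$-element independent subset of the $m$-dimensional space $F_m$, hence a basis. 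In particular $\{v_1, \ldots, v_n\}$ is a basis of $V = F_n$ satisfying $(ii)$, and since $\mathfrak{A} \subseteq \Gamma^\ast$ we have $v_j \in \Gamma^\ast \subset \mathbb{Q} \cdot \Gamma^\ast$, which is $(i)$.

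It then remains to fix the top form. The chosen vectors $f_1, \ldots, f_n \in \mathfrak{A} \subseteq \Gamma^\ast$ form an $\mathbb{R}$-basis of $V$, so their $\mathbb{Z}$-span is a full-rank (hence finite-index) sublattice of $\Gamma^\ast$; therefore every element of $\Gamma^\ast$, in particular each $v_j$, is a rational combination of the $f_i$. Consequently $v_1 \wedge \cdots \wedge v_n = c\,(f_1 \wedge \cdots \wedge f_n) = c\, d\mu$ for some $c \in \mathbb{Q}$, and $c \neq 0$ because $\{v_j\}$ is a basis of $V$. Setting $\gamma^\mathfrak{F}_1 := v_1/c$ and $\gamma^\mathfrak{F}_j := v_j$ for $j \geq 2$, the first vector still lies in $\mathbb{Q}\cdot\Gamma^\ast$, and rescaling a single vector changes neither the spans $F_m$ nor the linear independence, so $(i)$ and $(ii)$ persist, while now $\gamma^\mathfrak{F}_1 \wedge \cdots \wedge \gamma^\mathfrak{F}_n = c^{-1}(v_1 \wedge \cdots \wedge v_n) = d\mu$, giving $(iii)$.

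This is essentially elementary linear algebra, and I do not expect any substantive obstacle. The only point requiring mild care is the rationality of the normalising constant $c$: this is exactly where one uses that $\mathfrak{A}$, and hence the reference basis $f_1, \ldots, f_n$, consists of lattice vectors, so that the change-of-basis matrix between $\{v_j\}$ and $\{f_i\}$ is defined over $\mathbb{Q}$ and its determinant can be absorbed into $\gamma^\mathfrak{F}_1$ without leaving $\mathbb{Q}\cdot\Gamma^\ast$.
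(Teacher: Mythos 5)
Your argument is correct, and it is the natural way to establish the claim. The paper itself cites the lemma from \cite{szenesvergne} (p.~11) without reproducing a proof, so there is no in-paper argument to compare against; your construction — choosing $v_m \in \mathfrak{A} \cap (F_m \setminus F_{m-1})$ for each $m$, observing that this produces a flag-adapted basis drawn from $\Gamma^\ast$, and then absorbing the rational, nonzero determinant $c$ into a single vector — is precisely the standard argument. The one point that genuinely needs care, namely that $c \in \mathbb{Q}^\times$, you handle correctly by noting that $\mathbb{Z}\langle f_1,\ldots,f_n\rangle$ is a finite-index sublattice of $\Gamma^\ast$, so the change-of-basis matrix between $\{v_j\}$ and $\{f_i\}$ has rational entries; and rescaling $v_1$ by $1/c$ stays in $\mathbb{Q}\cdot\Gamma^\ast$ and affects neither the spans $F_m$ nor independence. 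No gap.
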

	Consider a flag $\mathfrak{F} \in \mathcal{FL}(\mathfrak{A})$. Pick a basis $\gamma^\mathfrak{F}$ as in the Lemma above. This basis induces an isomorphism of $\mathbb{C}$-linear spaces $\mathfrak{a}_\mathbb{C} \cong \mathbb{C}^n$ and so an isomorphism between the germs of meromorphic functions at the origin of $\mathfrak{a}_\mathbb{C}$ and of $\mathbb{C}^n$, which we denote by $\mathcal{M}_{\mathfrak{a}_\mathbb{C},0}$, respectively $\mathcal{M}_{\mathbb{C}^n,0} $.
	\begin{definition} The \textit{flag residue morphism} associated with the flag $\mathfrak{F}$ is the composition
		\begin{align*}
			\mathcal{M}_{\mathfrak{a}_\mathbb{C},0} \xrightarrow{\sim} \mathcal{M}_{\mathbb{C}^n,0}   \xrightarrow{\text{IR}_0} \mathbb{C}.
		\end{align*}
		Here $\text{IR}_0$ denotes the iterated residue operation, recalled in Appendix \ref{iterResSec}. By \cite{szenesvergne}, Lemma 2.5, this does not depend on the choice of the basis $\gamma^\mathfrak{F}$, so we have a well defined map $\operatorname{Res}_{(\cdot)} : \mathcal{FL}(\mathfrak{A}) \rightarrow \left( \mathcal{M}_{\mathfrak{a}_\mathbb{C},0} \right)^\ast$.
	\end{definition}
	A flag $\mathfrak{F} \in \mathcal{FL}(\mathfrak{A})$ determines a partition of $\mathfrak{A}$ into the subsets 
	\begin{equation*}\left \lbrace \mathfrak{P}_j := \mathfrak{A} \cap F_{j} \setminus F_{j-1} \right \rbrace_{j=1}^n.
	\end{equation*} 
	We define the vectors
	\begin{align*}
		\kappa^\mathfrak{F}_j := \sum_{s=1}^j \sum_{\alpha \in \mathfrak{P}_s} \alpha \in \Sigma \mathfrak{A},\,  j=1, \ldots, n.
	\end{align*}
	Note that $\kappa := \kappa^\mathfrak{F}_n$ does not depend on the flag $\mathfrak{F}$.
	\begin{definition} A flag $\mathfrak{F} \in \mathcal{FL}(\mathfrak{A})$ is called \textit{proper} if the vectors $\lbrace \kappa^\mathfrak{F}_j\rbrace_{j=1}^n$ form a basis of $V$.
		We introduce the function $\nu : \mathcal{FL}(\mathfrak{A}) \longrightarrow \lbrace 0, \pm 1 \rbrace$ given by
		\begin{align*}
			\nu(\mathfrak{F}) :=  \begin{cases} 
				0 & \text{if } \mathfrak{F} \text{ is not proper}, \\
				1 & \text{if } \mathfrak{F} \text{ is proper and } \lbrace \kappa_1^\mathfrak{F},  \ldots, \kappa_r^\mathfrak{F} \rbrace \text{ is positively oriented w.r.t. } d\mu,\\
				-1\,\, & \text{if } \mathfrak{F} \text{ is proper and } \lbrace \kappa_1^\mathfrak{F},  \ldots, \kappa_r^\mathfrak{F} \rbrace \text{ is negatively oriented w.r.t. } d\mu.
			\end{cases}
		\end{align*}
		
	\end{definition}
	
	For each flag $\mathfrak{F}$ we introduce the cone
	\begin{align*}
		\mathfrak{c}^+(\mathfrak{F}, \mathfrak{A}) := \sum_{j=1}^n \mathbb{R}_{>0} \kappa_j^\mathfrak{F}.
	\end{align*}
	If $\zeta \in V$ is a sum-regular vector, we define the set of flags for which $\zeta$ is in this cone:
	\begin{align*}
		\mathcal{FL}^+(\mathfrak{A}, \zeta) := \left\lbrace \mathfrak{F} \in \mathcal{FL}(\mathfrak{A})\,:  \zeta \in \mathfrak{c}^+(\mathfrak{F}, \mathfrak{U}) \right \rbrace.
	\end{align*}
	
	\begin{lem} Suppose $\zeta$ is sum-regular. Then all the flags in $\mathcal{FL}^+(\mathfrak{A}, \zeta)$ are proper and hence map to $\pm 1$ through $\nu$.
	\end{lem}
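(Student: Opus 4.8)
The plan is to reduce the claim to the single statement that every flag $\mathfrak{F} \in \mathcal{FL}^+(\mathfrak{A},\zeta)$ is proper. Granting this, the assertion $\nu(\mathfrak{F}) \in \{\pm1\}$ is immediate, since by the definition of $\nu$ the value $0$ is assigned only to flags that fail to be proper. So I would fix an arbitrary $\mathfrak{F} \in \mathcal{FL}^+(\mathfrak{A},\zeta)$ and argue by contradiction, assuming $\mathfrak{F}$ is \emph{not} proper.

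First I would unwind the membership $\mathfrak{F} \in \mathcal{FL}^+(\mathfrak{A},\zeta)$: it says precisely that $\zeta = \sum_{j=1}^n a_j\,\kappa_j^\mathfrak{F}$ with all $a_j > 0$, so in particular $\zeta$ lies in the subspace $W := \operatorname{Span}_{\mathbb{R}}\{\kappa_1^\mathfrak{F},\dots,\kappa_n^\mathfrak{F}\} \subseteq V$. Non-properness of $\mathfrak{F}$ means the $n$ vectors $\kappa_1^\mathfrak{F},\dots,\kappa_n^\mathfrak{F}$ do not form a basis of the $n$-dimensional space $V$, hence they are linearly dependent and $\dim_{\mathbb{R}} W \le n-1$. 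Now I would choose a subset $J \subseteq \{\kappa_1^\mathfrak{F},\dots,\kappa_n^\mathfrak{F}\}$ which is a basis of $W$; then $J \subseteq \Sigma\mathfrak{A}$ (each $\kappa_j^\mathfrak{F}$ lies in $\Sigma\mathfrak{A}$, as already noted when the $\kappa_j^\mathfrak{F}$ were introduced) and $|J| = \dim_{\mathbb{R}} W \le n-1$. Since $\mathfrak{A} \subseteq \Sigma\mathfrak{A}$ already contains the basis $f_1,\dots,f_n$ of $V$, the set $\Sigma\mathfrak{A}$ has at least $n$ elements, so $J$ can be enlarged to a set $I$ with $J \subseteq I \subseteq \Sigma\mathfrak{A}$ and $|I| = n-1$. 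Then
\begin{equation*}
\zeta \in W = \operatorname{Span}_{\mathbb{R}}(J) \subseteq \operatorname{Span}_{\mathbb{R}}(I),
\end{equation*}
which contradicts the hypothesis that $\zeta$ is sum-regular, i.e.\ $\Sigma\mathfrak{A}$-regular. Hence $\mathfrak{F}$ is proper, and the case distinction defining $\nu$ yields $\nu(\mathfrak{F}) = \pm 1$, as claimed.

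I do not expect a genuine obstacle here: the argument is essentially a dimension count played against the definition of sum-regularity. The only point requiring a little care is the combinatorial padding of the linearly independent set $J$ up to a subset of $\Sigma\mathfrak{A}$ of cardinality exactly $n-1$; this is where one uses the standing assumptions that $\mathfrak{A}$ spans $V$ (so that $|\Sigma\mathfrak{A}|$ is large enough) and that each $\kappa_j^\mathfrak{F}$ genuinely belongs to $\Sigma\mathfrak{A}$. If desired, the whole thing can be phrased more uniformly by isolating the observation that any proper subspace of $V$ spanned by elements of $\Sigma\mathfrak{A}$ is contained in $\operatorname{Span}_{\mathbb{R}}(I)$ for some $(n-1)$-element subset $I \subseteq \Sigma\mathfrak{A}$, and then applying it to $W$.
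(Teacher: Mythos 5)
Your proof is correct and is essentially the same argument as the paper's: non‑properness forces $\{\kappa_j^\mathfrak{F}\}$ to be linearly dependent, so $\zeta$ lies in a proper subspace spanned by elements of $\Sigma\mathfrak{A}$, contradicting sum‑regularity. You are only a little more explicit about padding the dependent set up to exactly $n-1$ elements of $\Sigma\mathfrak{A}$ so as to invoke the definition of $\Sigma\mathfrak{A}$‑regularity verbatim; the paper's version is terser (and says ``elements of $\mathfrak{A}$'' where $\Sigma\mathfrak{A}$ is meant), but the idea is identical.
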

	\begin{proof}
		Let $\mathfrak{F} \in \mathcal{FL}^+(\mathfrak{A}, \zeta)$. Then we have $\zeta = \sum_{j=1}^n c^j k_j^\mathfrak{F}$. If the vectors $\kappa_j^\mathfrak{F}$ are linearly dependent, then they span a space contained in some hyperplane generated by elements of $\mathfrak{A}$. But $\zeta$ is sum-regular by assumption, so this is a contradiction.
	\end{proof}
	
	We can now define a notion of Jeffrey-Kirwan residue for arbitrary meromorphic functions.
	\begin{definition}\label{JKDef}
		Let $\zeta$ be a sum-regular vector of $V$. We define the JK residue map $\jk^\mathfrak{A}_\zeta : \mathcal{M}_{\mathfrak{a}_\mathbb{C},0} \rightarrow \mathbb{C}$ as
		\begin{align*}
			\jk^\mathfrak{A}_\zeta(f) = \sum_{\mathfrak{F}\in \mathcal{FL}^+(\mathfrak{A}, \zeta)} \nu(\mathfrak{F})\operatorname{Res}_\mathfrak{F}(f).
		\end{align*}
	\end{definition}
	
	\begin{rmk}
		Notice that this residue \textit{depends} on the choice of $\zeta$ and, in particular, it is not constant if we let $\zeta$ vary inside a chamber $\xi$ of $V$. This is the price to pay in order to extend the classical J-K residue (as defined in \cite{brionvergne}) to a functional acting on \textit{all} germs of meromorphic functions at the origin of $\mathfrak{a}_\mathbb{C}$.
	\end{rmk}
	
	\begin{rmk}\label{orientationRmk} One checks that for each $\mathfrak{F}\in \mathcal{FL}^+(\mathfrak{A}, \zeta)$, the factors $\nu(\mathfrak{F})$, $\operatorname{Res}_\mathfrak{F}(f)$ depend on $d\mu$ only through the choice of orientation. Moreover, both factors simply change sign when $d\mu $ is replaced with $-d\mu$. This is obvious for $\nu(\mathfrak{F})$ and follows from Lemma \ref{basisLem} and the change of variables formula for iterated residues, Proposition \ref{ResidueAndChangeOfVariables}, in the case of $\operatorname{Res}_\mathfrak{F}(f)$. Thus, the quantity $\jk^\mathfrak{A}_\zeta(f)$ does not depend on the fixed choice of $d\mu$ used in Definition \ref{JKDef}.  
	\end{rmk}	
	The classical Jeffrey-Kirwan residue can now be expressed in terms of the flag residues introduced above.
	
	\begin{thm}[\cite{szenesvergne} Theorem 2.6]\label{JKFlag} Let $\xi$ be a chamber of $V$ and let $\zeta \in \xi$ be a sum-regular vector. Then, for every $f \in \mathcal{M}_{\mathfrak{a}_\mathbb{C},0}$ which is regular (locally around the origin) on $\mathfrak{a}_\mathbb{C}^{reg}$, we have
		\begin{align*}
			\operatorname{J-K}^\mathfrak{A}_\xi(f) = \jk^\mathfrak{A}_\zeta(f)
		\end{align*}
	\end{thm}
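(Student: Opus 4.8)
The plan is to recover the classical functional $\operatorname{J-K}^\mathfrak{A}_\xi$ from its Brion--Vergne characterisation and check that $\jk^\mathfrak{A}_\zeta$ satisfies the same defining properties on the space of germs at $0$ that are regular on $\mathfrak{a}_\C^{reg}$. Write $R_\mathfrak{A}$ for the space of rational functions on $\mathfrak{a}_\C$ with poles along $\bigcup_{\alpha\in\mathfrak{A}}V(\alpha)$, and for $\sigma\in\mathfrak{B}(\mathfrak{A})$ with a fixed ordering let ${\det}_{d\mu}\sigma\in\C^\ast$ be defined by $\bigwedge_{\alpha\in\sigma}\alpha=({\det}_{d\mu}\sigma)\,d\mu$. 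The characterisation we use is: $(a)$ the value of the functional on a germ depends only on the homogeneous component of degree $-n$ of its Laurent expansion, and it vanishes on the partial derivatives of elements of $R_\mathfrak{A}$ (equivalently, it factors through the total residue map); $(b)$ on a simple fraction $1/\prod_{\alpha\in\sigma}\alpha$ it equals $1/|{\det}_{d\mu}\sigma|$ if $\xi\subset\operatorname{Cone}(\sigma):=\sum_{\alpha\in\sigma}\R_{\geq0}\alpha$, and $0$ otherwise. Since $R_\mathfrak{A}$ is spanned by the simple fractions $1/\prod_{\alpha\in\sigma}\alpha$ ($\sigma\in\mathfrak{B}(\mathfrak{A})$) together with the partial derivatives of its own elements (the structure theorem for $R_\mathfrak{A}$, see \cite{brionvergne, szenesvergne}), properties $(a)$--$(b)$ determine the functional, so it suffices to verify them for $\jk^\mathfrak{A}_\zeta$.

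For $(a)$: any germ $f$ regular on $\mathfrak{a}_\C^{reg}$ near $0$ has the form $g/\prod_{\alpha\in\mathfrak{A}}\alpha^{m_\alpha}$ with $g$ holomorphic and $m_\alpha\gg 0$, and since $\operatorname{IR}_0$ extracts the coefficient of $(z_1\cdots z_n)^{-1}$ — which is homogeneous of total degree $-n$ — while the isomorphism $\mathfrak{a}_\C\cong\C^n$ attached to any adapted basis $\gamma^\mathfrak{F}$ is linear, each $\operatorname{Res}_\mathfrak{F}(f)$, hence $\jk^\mathfrak{A}_\zeta(f)$, depends only on the degree $-n$ component of $f$, which lies in $R_\mathfrak{A}$; the classical functional has the same homogeneity by construction. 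For the vanishing on derivatives, fix a flag $\mathfrak{F}$ and a $d\mu$-normalised adapted basis $\gamma^\mathfrak{F}$ (Lemma \ref{basisLem}); writing a partial derivative in the original coordinates as a constant-coefficient combination of the $\partial_{\gamma^\mathfrak{F}_j}$, it is enough to observe that $\operatorname{IR}_0(\partial_{z_j}u)=0$ for every germ $u$ at $0\in\C^n$ and every $j$, since the coefficient of $(z_1\cdots z_n)^{-1}$ in $\partial_{z_j}u$ is the coefficient of $z_j^{0}\prod_{i\neq j}z_i^{-1}$ in $u$ multiplied by $0$. Hence each $\operatorname{Res}_\mathfrak{F}$, and so $\jk^\mathfrak{A}_\zeta$, kills the derivatives of elements of $R_\mathfrak{A}$.

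For $(b)$, fix $\sigma\in\mathfrak{B}(\mathfrak{A})$. Unwinding the definition of $\operatorname{IR}_0$ in an adapted basis, one checks that $\operatorname{Res}_\mathfrak{F}(1/\prod_{\alpha\in\sigma}\alpha)=0$ unless the flag $\mathfrak{F}$ is \emph{subordinate} to $\sigma$, i.e.\ each $F_j$ is the span of a $j$-element subset of $\sigma$ (there are exactly $n!$ such flags, one per ordering $\tau$ of $\sigma$); for such $\mathfrak{F}$ it equals $\pm 1/|{\det}_{d\mu}\sigma|$, the sign being the orientation, relative to $d\mu$, of the ordering of $\sigma$ singled out by $\mathfrak{F}$. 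It then remains to sum $\nu(\mathfrak{F})\operatorname{Res}_\mathfrak{F}(1/\prod_{\alpha\in\sigma}\alpha)$ over those subordinate flags that also lie in $\mathcal{FL}^+(\mathfrak{A},\zeta)$ and to verify that the contributions add up to $1/|{\det}_{d\mu}\sigma|$ when $\zeta\in\operatorname{Cone}(\sigma)$ and cancel to $0$ otherwise; combined with $\zeta\in\xi$ this matches $(b)$ for $\operatorname{J-K}^\mathfrak{A}_\xi$. Compatibility of all the orientation conventions with the fixed volume form $d\mu$ is handled by Lemma \ref{basisLem}$(iii)$ and Remark \ref{orientationRmk}.

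I expect the main obstacle to be this last step: showing that, for each basis $\sigma\in\mathfrak{B}(\mathfrak{A})$, the cones $\mathfrak{c}^+(\mathfrak{F},\mathfrak{A})$ of the flags $\mathfrak{F}$ subordinate to $\sigma$ cover $\operatorname{Cone}(\sigma)$ and carry values of $\nu$ that exactly cancel the signs of the flag residues $\operatorname{Res}_\mathfrak{F}(1/\prod_{\alpha\in\sigma}\alpha)$, so that a sum-regular $\zeta$ inside $\operatorname{Cone}(\sigma)$ picks up a single contribution equal to $+1/|{\det}_{d\mu}\sigma|$, while all contributions cancel in pairs for $\zeta$ outside $\operatorname{Cone}(\sigma)$. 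This sign bookkeeping — reconciling $\nu$, the orientation entering $\operatorname{IR}_0$, the extra elements of $\mathfrak{A}\setminus\sigma$ that may enlarge the vectors $\kappa_j^\mathfrak{F}$, and the choice of $d\mu$ — is the delicate point; once the structure theorem for $R_\mathfrak{A}$ and the characterisation $(a)$--$(b)$ are granted, the rest is formal.
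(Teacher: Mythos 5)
The paper cites this theorem verbatim from Szenes--Vergne (their Theorem 2.6) and gives no proof of its own, so there is no in-paper argument to measure your proposal against; I can only assess the proposal on its own terms.

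Your plan — recover the classical $\operatorname{J-K}^\mathfrak{A}_\xi$ from the Brion--Vergne characterisation (support on the degree $-n$ component modulo derivatives of $R_\mathfrak{A}$, normalisation on simple fractions) and then verify those two properties for $\jk^\mathfrak{A}_\zeta$ — is the right route and indeed the one used in \cite{szenesvergne}. Your verification of $(a)$ is correct: $\operatorname{IR}_0$ extracts a fixed homogeneous degree, it kills partials in the adapted coordinates, and the change-of-basis in Lemma \ref{basisLem} is linear. The observation that $\operatorname{Res}_\mathfrak{F}(1/\prod_{\alpha\in\sigma}\alpha)$ vanishes unless $\mathfrak{F}$ is subordinate to $\sigma$, and otherwise equals $\pm1/\vert{\det}_{d\mu}\sigma\vert$, is also correct.

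However, the entire content of the theorem is concentrated in the step you defer: for a basis $\sigma\in\mathfrak{B}(\mathfrak{A})$ and a sum-regular $\zeta$, showing that
\begin{align*}
\sum_{\mathfrak{F}\in\mathcal{FL}^+(\mathfrak{A},\zeta)}\nu(\mathfrak{F})\,\operatorname{Res}_\mathfrak{F}\Big(1\big/\textstyle\prod_{\alpha\in\sigma}\alpha\Big)
\end{align*}
equals $1/\vert{\det}_{d\mu}\sigma\vert$ or $0$ according as $\zeta$ lies inside or outside the cone spanned by $\sigma$. You phrase this as the cones $\mathfrak{c}^+(\mathfrak{F},\mathfrak{A})$ of the subordinate flags ``covering'' the cone of $\sigma$ with cancelling signs, but this is not a covering statement: the vectors $\kappa^\mathfrak{F}_j$ defining $\mathfrak{c}^+(\mathfrak{F},\mathfrak{A})$ sum over \emph{all} of $\mathfrak{A}$, not just $\sigma$, so these cones generally overlap, need not be contained in the cone of $\sigma$, and carry orientations $\nu(\mathfrak{F})$ that have no obvious relation to the sign of the flag residue. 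The required cancellation is a genuine signed inclusion–exclusion over overlapping cones, and it is precisely the combinatorial heart of the Szenes--Vergne proof. Saying that ``once the structure theorem for $R_\mathfrak{A}$ and the characterisation $(a)$--$(b)$ are granted, the rest is formal'' therefore mislocates the difficulty: the sign bookkeeping is not a residual chore but the substance of the theorem. As written, the proposal is an accurate plan of attack with the decisive lemma left unproved.
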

	
	When $\mathfrak{A}$ is a basis of $V$, the JK residue takes a special form.
	\begin{lem}
		Let $\mathfrak{A}$ be a basis of $V$ and let $\zeta \in V$ be a $\Sigma \mathfrak{A}$-regular vector. If $\zeta \notin \operatorname{Span}_{\mathbb{R}_{>0}}\{\mathfrak{A}\}$ then $\jk_{\zeta}^{\mathfrak{A}}=0$ as a morphism. Otherwise $\jk_{\zeta}^{\mathfrak{A}}$ is the composition
		\begin{align*}
			\mathcal{M}_{\mathfrak{a}_\mathbb{C},0} \xrightarrow{\sim} \mathcal{M}_{\mathbb{C}^n,0}\xrightarrow{\operatorname{IR}_0} \mathbb{C}.
		\end{align*}
		where the first isomorphism is given by the choice of the unique ordering of $\mathfrak{A}$ such that the components $c_1, \ldots, c_n$ of $\zeta$ with respect to this ordered basis satisfy
		\begin{align*}
			0 < c_n < \cdots < c_1.
		\end{align*}
	\end{lem}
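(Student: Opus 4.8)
The plan is to unwind Definition \ref{JKDef} directly, exploiting the fact that $\mathfrak{A}$ is a basis. First I would observe that since every subset of $\mathfrak{A}$ is then linearly independent, each flag $\mathfrak{F}\in\mathcal{FL}(\mathfrak{A})$ is forced to be the flag of partial spans $F_j=\operatorname{Span}(b_1,\ldots,b_j)$ of some ordering $(b_1,\ldots,b_n)$ of $\mathfrak{A}$, and conversely; so $\mathcal{FL}(\mathfrak{A})$ is identified with the set of orderings of $\mathfrak{A}$. For such a flag the partition pieces $\mathfrak{P}_j$ are the singletons $\{b_j\}$, hence $\kappa^{\mathfrak{F}}_j=b_1+\cdots+b_j$, and since the change of basis from $(b_i)$ to $(\kappa^{\mathfrak{F}}_i)$ is unipotent lower triangular, every flag in $\mathcal{FL}(\mathfrak{A})$ is automatically proper.

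Next I would pin down $\mathcal{FL}^+(\mathfrak{A},\zeta)$. Writing $\zeta=\sum_i c_i b_i$ in the ordered basis attached to $\mathfrak{F}$, a one-line computation shows that $\zeta=\sum_j t_j\kappa^{\mathfrak{F}}_j$ with all $t_j>0$ if and only if $c_i=\sum_{j\geq i}t_j$, i.e.\ if and only if $c_1>c_2>\cdots>c_n>0$. Then I would use $\Sigma\mathfrak{A}$-regularity of $\zeta$ to rule out degeneracies: the coordinates of $\zeta$ in the basis $\mathfrak{A}$ are all nonzero (otherwise $\zeta$ lies in a span of $n-1$ elements of $\mathfrak{A}\subseteq\Sigma\mathfrak{A}$) and pairwise distinct (if $c_i=c_{i'}$ then $\zeta\in\operatorname{Span}(b_i+b_{i'},\{b_k\}_{k\neq i,i'})$, once more a span of $n-1$ elements of $\Sigma\mathfrak{A}$). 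Consequently, if $\zeta\notin\operatorname{Span}_{\mathbb{R}_{>0}}\{\mathfrak{A}\}$ then some coordinate is negative, no ordering makes all $c_i$ positive, so $\mathcal{FL}^+(\mathfrak{A},\zeta)=\varnothing$ and $\jk^{\mathfrak{A}}_\zeta=0$; otherwise all coordinates are positive and distinct and there is a unique ordering $\mathfrak{b}=(b_1,\ldots,b_n)$ with $c_1>\cdots>c_n>0$, giving $\mathcal{FL}^+(\mathfrak{A},\zeta)=\{\mathfrak{F}_{\mathfrak{b}}\}$, which is exactly the flag/ordering appearing in the statement.

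It remains to identify $\nu(\mathfrak{F}_{\mathfrak{b}})\operatorname{Res}_{\mathfrak{F}_{\mathfrak{b}}}$ with $\operatorname{IR}_0$ computed in the coordinates dual to $\mathfrak{b}$. Let $\pi$ be the permutation with $b_j=f_{\pi(j)}$, so that $b_1\wedge\cdots\wedge b_n=\operatorname{sgn}(\pi)\,f_1\wedge\cdots\wedge f_n=\operatorname{sgn}(\pi)\,d\mu$; since $\kappa^{\mathfrak{F}}_1\wedge\cdots\wedge\kappa^{\mathfrak{F}}_n=b_1\wedge\cdots\wedge b_n$ (unipotent change of basis) this yields $\nu(\mathfrak{F}_{\mathfrak{b}})=\operatorname{sgn}(\pi)$. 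For the flag residue I would use the admissible choice $\gamma^{\mathfrak{F}}=(\operatorname{sgn}(\pi)\,b_1,b_2,\ldots,b_n)$, which lies in $\mathbb{Q}\cdot\Gamma^\ast$, is adapted to the flag, and satisfies $\gamma^{\mathfrak{F}}_1\wedge\cdots\wedge\gamma^{\mathfrak{F}}_n=d\mu$ as demanded by Lemma \ref{basisLem}(iii); by \cite{szenesvergne}, Lemma 2.5, this computes $\operatorname{Res}_{\mathfrak{F}_{\mathfrak{b}}}(f)$. As the coordinates dual to $\gamma^{\mathfrak{F}}$ differ from those dual to $\mathfrak{b}$ only by the sign $\operatorname{sgn}(\pi)$ in the first slot, and $\operatorname{IR}_0$ picks up a factor $-1$ under a sign change of a single variable, I get $\operatorname{Res}_{\mathfrak{F}_{\mathfrak{b}}}(f)=\operatorname{sgn}(\pi)\operatorname{IR}_0(f)$; multiplying by $\nu(\mathfrak{F}_{\mathfrak{b}})=\operatorname{sgn}(\pi)$ the two signs cancel and $\jk^{\mathfrak{A}}_\zeta(f)=\operatorname{IR}_0(f)$. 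I expect the main obstacle to be exactly this sign bookkeeping in the last step — reconciling the orientation factor $\nu$ with the normalization of $\gamma^{\mathfrak{F}}$ forced by condition (iii) of Lemma \ref{basisLem} — while the first two steps are elementary, the only essential input being the $\Sigma\mathfrak{A}$-regularity of $\zeta$.
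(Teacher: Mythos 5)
Your proof is correct and follows essentially the same route as the paper's: identifying flags with orderings of $\mathfrak{A}$, computing the change of coordinates between $(b_j)$ and $(\kappa^{\mathfrak{F}}_j)$ to characterise $\mathcal{FL}^+(\mathfrak{A},\zeta)$ via the chain $c_1>\cdots>c_n>0$, using $\Sigma\mathfrak{A}$-regularity to force the coordinates to be positive (or not) and pairwise distinct, and then reconciling the two signs $\nu(\mathfrak{F}_{\mathfrak{b}})=\operatorname{sgn}(\pi)$ and $\operatorname{Res}_{\mathfrak{F}_{\mathfrak{b}}}=\operatorname{sgn}(\pi)\operatorname{IR}_0$ via the normalisation condition of Lemma \ref{basisLem} and Proposition \ref{ResidueAndChangeOfVariables}. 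The only cosmetic difference is that you absorb the orientation correction into the first basis vector $\gamma^{\mathfrak{F}}_1 = \operatorname{sgn}(\pi)b_1$ and then use the behaviour of $\operatorname{IR}_0$ under a one-variable sign change, while the paper phrases the same computation as a change of variables to $(-1)^\sigma f_{\sigma(1)}, f_{\sigma(2)},\ldots,f_{\sigma(n)}$; the content is identical. You also spell out the $\Sigma\mathfrak{A}$-regularity argument (nonvanishing via $\mathfrak{A}\subset\Sigma\mathfrak{A}$, distinctness via $b_i+b_{i'}\in\Sigma\mathfrak{A}$) more explicitly than the paper, which just asserts it.
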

	\begin{proof}
		Choose an ordering $f_1, \ldots, f_n$ for the elements of $\mathfrak{A}$ such that this ordered basis is positively oriented. The set of flags $\mathcal{FL}(\mathfrak{A})$ is in bijection with the set of permutations $S_n$ via
		\begin{align*}
			S_n \ni \sigma \mapsto \mathfrak{F}_\sigma \in \mathcal{FL}(\mathfrak{A}), \qquad (\mathfrak{F}_\sigma)_j := \text{Span}_\mathbb{R}(f_{\sigma(1)}, \ldots, f_{\sigma(j)}).
		\end{align*}
		Every flag $\mathfrak{F}_\sigma$ is proper, since the set
		\begin{align}\label{SigmaBasis}
			\big\lbrace f_{\sigma(1)}, f_{\sigma(1)} + f_{\sigma(2)}, \ldots, \sum_{i=1}^n f_{\sigma(i)} \big\rbrace
		\end{align}
		is a basis of $V$. Note moreover that the orientation of this basis coincides with $(-1)^\sigma$.
		Assume that $\zeta \in \mathfrak{c}^+(\mathfrak{F}_\sigma, \mathfrak{A})$. Then its coordinates $\zeta_1, \ldots, \zeta_n$ with respect to the basis (\ref{SigmaBasis}) are all positive. On the other hand its coordinates $c_1,\ldots, c_n$ with respect to the basis $f_{\sigma(1)}, \ldots, f_{\sigma(n)}$ are given by
		\begin{align*}
			c_{j} := \sum_{i=1}^j \zeta_i.
		\end{align*}
		Thus $\zeta \in \mathfrak{c}^+(\mathfrak{F}_\sigma, \mathfrak{A})$ implies that $0 < c_n < \ldots < c_1$. This condition cannot be satisfied if $\zeta \notin \operatorname{Span}_{\mathbb{R}_{>0}}\{\mathfrak{A}\}$, hence we obtain the first part of the claim. 
		
		On the other hand, if $\zeta \notin \operatorname{Span}_{\mathbb{R}_{>0}}\{\mathfrak{A}\}$, then its components with respect to $f_1, \ldots, f_n$ are all positive and distinct since $\zeta$ is sum-regular. So there exist a unique permutation $\sigma \in S_n$ such that $0 < c_n < \ldots < c_1$, and we have
		\begin{align*}
			\text{JK}_\zeta^\mathfrak{A}(\ast) = \nu(\mathfrak{F}_\sigma)\text{Res}_{\mathfrak{F}_\sigma}(\ast).
		\end{align*}
		We have already seen that $\nu(\mathfrak{F}_\sigma) = (-1)^\sigma$, and that the same number also gives the orientation of the basis $f_{\sigma(1)}, \ldots, f_{\sigma(n)}$. By definition, in order to compute the flag residue, we should express the meromorphic function under the linear change of basis to $(-1)^\sigma f_{\sigma(1)}, f_{\sigma(2)}, \ldots, f_{\sigma(n)}$. By Proposition \ref{ResidueAndChangeOfVariables} we see that this is equivalent to multiplying by $(-1)^\sigma$ the residue obtained by considering the basis $f_{\sigma(1)}, \ldots, f_{\sigma(n)}$. Then the result follows from $(-1)^\sigma (-1)^\sigma = 1$.
	\end{proof}
	\begin{cor}\label{JKBasisCor}
		Let $\mathfrak{A}$ be a basis of $V$ and let $\xi$ be a chamber. If $\xi =\sum_{u \in \mathfrak{A}}\R_{>0}  u$ then the functional $\operatorname{J-K}_{\xi}^{\mathfrak{A}}$ is the composition
		\begin{align*}
			\mathcal{M}_{\mathfrak{a}_\mathbb{C},0} \xrightarrow{\sim} \mathcal{M}_{\mathbb{C}^n,0}\xrightarrow{\operatorname{IR}_0} \mathbb{C}
		\end{align*}
		restricted to the set of meromorphic functions that are holomorphic on $\mathfrak{a}_\mathbb{C}^{reg}$ in a neighborhood of the origin.
		Here the first isomorphism is induced by an arbitrary choice of an ordering for the elements of $\mathfrak{A}$.
	\end{cor}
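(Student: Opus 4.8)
The plan is to deduce the corollary from the preceding Lemma together with Theorem \ref{JKFlag}; the only point that needs an argument beyond quoting these is that, once one restricts to germs regular on $\mathfrak{a}_\mathbb{C}^{reg}$, the functional $\operatorname{IR}_0$ composed with the coordinate isomorphism does not depend on the ordering of $\mathfrak{A}$.

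First I would pick a sum-regular vector $\zeta$ lying in the given chamber $\xi$. This is possible because $\Sigma\mathfrak{A}$ is finite, so the sum-regular vectors form the complement of finitely many proper linear subspaces of $V$, hence a dense (open) subset, which therefore meets the nonempty open cone $\xi$. By hypothesis $\xi=\operatorname{Span}_{\mathbb{R}_{>0}}\{\mathfrak{A}\}$, so $\zeta\in\operatorname{Span}_{\mathbb{R}_{>0}}\{\mathfrak{A}\}$, and the preceding Lemma applies: it identifies $\jk^{\mathfrak{A}}_{\zeta}$, as a functional on all of $\mathcal{M}_{\mathfrak{a}_\mathbb{C},0}$, with the composition $\mathcal{M}_{\mathfrak{a}_\mathbb{C},0}\xrightarrow{\sim}\mathcal{M}_{\mathbb{C}^n,0}\xrightarrow{\operatorname{IR}_0}\mathbb{C}$ for the (unique) ordering of $\mathfrak{A}$ making the coordinates of $\zeta$ satisfy $0<c_n<\cdots<c_1$. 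On the other hand, since $\zeta$ is sum-regular and lies in the chamber $\xi$, Theorem \ref{JKFlag} gives $\operatorname{J-K}^{\mathfrak{A}}_{\xi}(f)=\jk^{\mathfrak{A}}_{\zeta}(f)$ for every germ $f$ that is regular near the origin on $\mathfrak{a}_\mathbb{C}^{reg}$. Combining the two already proves the corollary, except that it exhibits $\operatorname{J-K}^{\mathfrak{A}}_{\xi}$ through one particular ordering of $\mathfrak{A}$ rather than an arbitrary one.

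It remains to see that the ordering is irrelevant on regular germs. Here I would argue as follows: since $\mathfrak{A}$ is a basis, for the linear coordinates $z_1,\dots,z_n$ on $\mathfrak{a}_\mathbb{C}\cong\mathbb{C}^n$ induced by any ordering of $\mathfrak{A}$, the open set $\mathfrak{a}_\mathbb{C}^{reg}$ is precisely the complement of the coordinate hyperplanes $\bigcup_i\{z_i=0\}$. Hence a germ $f$ regular near the origin on $\mathfrak{a}_\mathbb{C}^{reg}$ is holomorphic on some punctured polydisc $\{0<|z_i|<\varepsilon,\ i=1,\dots,n\}$, and therefore has there a unique convergent Laurent expansion $f=\sum_{\alpha\in\mathbb{Z}^n}c_\alpha z^\alpha$ (obtained by iterating the one-variable Laurent expansion). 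By the definition of the iterated residue recalled in Appendix \ref{iterResSec}, $\operatorname{IR}_0(f)=c_{(-1,\dots,-1)}$. Passing from one ordering of $\mathfrak{A}$ to another amounts to permuting the coordinates $z_i$, which permutes the coefficients $c_\alpha$ but fixes the index $(-1,\dots,-1)$; so $\operatorname{IR}_0$ composed with the coordinate isomorphism yields the same functional for every ordering, and in particular equals $\operatorname{J-K}^{\mathfrak{A}}_{\xi}$ on regular germs. The main (indeed only nontrivial) obstacle is exactly this last reduction — showing that the iterated residue is order-insensitive on the subspace of germs holomorphic on $\mathfrak{a}_\mathbb{C}^{reg}$ near the origin — and, once such germs are identified with functions on a punctured polydisc, it reduces to the symmetry of the top Laurent coefficient; everything else is a direct application of the preceding Lemma and of Theorem \ref{JKFlag}.
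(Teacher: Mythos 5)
Your proof is correct and follows the same route as the paper: pick a sum-regular $\zeta$ in the chamber $\xi$, apply Theorem \ref{JKFlag} to identify $\operatorname{J-K}^{\mathfrak{A}}_\xi$ with $\jk^{\mathfrak{A}}_\zeta$ on regular germs, and then invoke the preceding Lemma (which, since $\xi=\sum_{u\in\mathfrak{A}}\R_{>0}u$ forces $\zeta\in\operatorname{Span}_{\R_{>0}}\mathfrak{A}$, exhibits $\jk^{\mathfrak{A}}_\zeta$ as $\operatorname{IR}_0$ in a specific ordering). The one thing you do beyond the paper's terse proof is to justify why the \emph{arbitrary} ordering in the statement is legitimate: on the subspace of germs holomorphic near the origin off the coordinate hyperplanes, the iterated residue equals the $(-1,\dots,-1)$ Laurent coefficient on a punctured polydisc, which is permutation-invariant; the paper leaves this point implicit, so your addition is a genuine (if small) gap-fill rather than a different approach.
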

	\begin{proof}
		By Theorem \ref{JKFlag} we see that, chosen a sum-regular $\zeta \in \xi$, $\text{J-K}_\xi^\mathfrak{A}$ coincides with the restriction of $\text{JK}_\zeta^\mathfrak{A}$. Then the claim follows from the previous Lemma.
	\end{proof}
	
	\section{JK residues and quivers}\label{JKquivSec}
	In this Section we describe the application of JK residues to quiver invariants, following \cite{beau, cordova}. 
	
	Let $ Q$ be a quiver without loops or oriented cycles. We define an equivalence relation on $ Q_1$ by 
	\begin{align*}
		\alpha \sim \beta \iff (t(\alpha), h(\alpha)) = (t(\beta), h(\beta)),
	\end{align*}
	and consider a new quiver $\bar{ Q}$, called the reduced quiver of $ Q$, defined by $\bar{ Q}_0 =  Q_0$ and $\bar{ Q}_1 :=  Q_1/\sim$, with head and tail functions induced on the quotient by those of $ Q$. For every arrow $\alpha \in\bar{ Q}_1$ we define the multiplicity $m_\alpha = | \pi^{-1}(\alpha)| = \langle h(\alpha), t(\alpha)\rangle$, where $\langle - , - \rangle$ is the skew-symmetrised Euler form of $ Q$. 
	
	Fix a dimension vector $d \in \Z_{\geq 0}  Q_0$. The complexified gauge group $G_d$ is a Lie group with Lie algebra $\mathfrak{g}$, and its Cartan subalgebra is the abelian Lie algebra of diagonal matrices
	\begin{align*}
		\mathfrak{h} = \bigoplus_{v \in Q_0} \mathbb{C}^{d_v} \leq \mathfrak{g} = \prod_{v \in  Q_0} \text{Mat}_{d_v \times d_v}(\mathbb{C}).
	\end{align*}
	The roots of $\mathfrak{g}$ are the $\mathbb{C}$-linear functionals $\mathfrak{g} \rightarrow \mathbb{C}$ of the form
	\begin{align*}
		r_{v,i,j} (M) := (M_v)_{j,j} - (M_v)_{i,i}, \text{ for } v \in  Q_0, \, i\neq j \in \lbrace 1, \ldots, d_v \rbrace.
	\end{align*}
	The weights of the representation of Lie groups 
	\begin{align*}
		G_d \longrightarrow \text{GL}(\text{Rep}_d( Q))
	\end{align*}
	given by basechange are the $\mathbb{C}$-linear functionals $\mathfrak{h} \rightarrow \mathbb{C}$ of the form
	\begin{align*}
		\rho_{\alpha, i,j}(M) := (M_{h(\alpha)})_{j,j} - (M_{t(\alpha)})_{i,i},
	\end{align*}
	for $\alpha \in  Q_1, \, i \in \lbrace 1, \ldots, d_{t(\alpha)}\rbrace, \, j \in \lbrace 1, \ldots, d_{h(\alpha)}\rbrace$. 
	
	\begin{rmk}\label{MapWeightsArrows}
		Notice that if $\alpha$ and $\alpha^\prime$ are two distinct arrows in $ Q$ having the same endpoints, then they induce the same weights. In particular, we see that we have a surjection from the set of weights to the set $\bar{ Q}_1$ such that the fibre over the arrow $\alpha$ contains $d_{h(\alpha)} d_{t(\alpha)}$ elements. This map induces a definition of multiplicity for the weights via the analogous notion for arrows of $\bar{ Q}$.
	\end{rmk}
	
	We fix a vertex $\overline{v}$ of $ Q$ and choose an index $\overline{k} \in \lbrace 1, \ldots, d_{\overline{v}} \rbrace$.
	Consider the hyperplane
	\begin{align*}
		\mathfrak{a}_\mathbb{C} := V({u}_{\overline{v}, \overline{k}}) \subset \mathfrak{h}.
	\end{align*}
	
	There is an obvious isomorphism $\mathfrak{a}_{\C} \cong \bigoplus_{v \in  Q_0} \mathbb{C}^{d_v - \delta_{v, \overline{v}}}$, which we use implicitly in the following. Let $V_\mathbb{C}$ be the dual linear space of $\mathfrak{a}_\mathbb{C}$. We denote by $\mathfrak{R}_\mathbb{C}$ the image of the set of roots of $G_d$ under the natural projection $\mathfrak{h}^\ast \rightarrow V_\mathbb{C}$, and similarly by $\mathcal{W}_{\mathbb{C}}$ the image of the set of weights.
	
	\begin{rmk}
		Note that the elements of $\mathfrak{R}_\mathbb{C}$ and of $\mathcal{W}_\mathbb{C}$ have real coefficients with respect to the canonical basis of $V_\mathbb{C}$. So we have real linear spaces $\mathfrak{a}$ $(:= V(x_{\overline{v}, \overline{k}}) \subset \bigoplus_{v \in  Q_0} \mathbb{R}^{d_v})$, $V := \mathfrak{a}^\ast$, and subsets $\mathfrak{R}\subset V$, $\mathcal{W}\subset V$, such that
		\begin{align*}
			\mathfrak{a}_\mathbb{C} \cong \mathfrak{a} \otimes_\mathbb{R} \mathbb{C},\, V_\mathbb{C} \cong V \otimes_\mathbb{R} \mathbb{C}, \, \mathfrak{R}_\mathbb{C} \cong \mathfrak{R} \otimes_\mathbb{R} \mathbb{C},\, \mathcal{W}_\mathbb{C} \cong \mathcal{W} \otimes_\mathbb{R} \mathbb{C}.
		\end{align*}
	\end{rmk}
	
	\begin{definition} An \textit{R-charge} is an element $R \in \text{span}_\mathbb{R} \mathcal{W}$. We will denote with $R_\rho$ the component of $R$ corresponding to the weight $\rho$.  
	\end{definition}
	
	For a fixed $R$-charge, we consider the following affine hyperplanes of $\mathfrak{a}_\mathbb{C}$:
	\begin{align*}
		&H_r := V(r - 1), \, r \in \mathfrak{R}_\mathbb{C},\\
		&H_\rho := V(\rho + R_\rho), \, \rho \in \mathcal{W}_\mathbb{C}.
	\end{align*}
	
	\begin{definition} The set $\mathfrak{M}_{sing} \subset \mathfrak{a}_\mathbb{C}$ is given by points $x$ such that there are at least $\vert d \vert - 1$ linearly independent hyperplanes of the form $H_{\ast}$ above meeting at $x$. We denote by $\mathfrak{A}$ the set $\mathfrak{R}\cup \mathcal{W}$ and, given $x \in \mathfrak{M}_{sing}$, we introduce the set $\mathfrak{A}_x \subseteq V$ consisting of elements of $\mathfrak{A}$ such that the corresponding affine hyperplane contains $x$.
	\end{definition}
	
	A stability vector $\theta \in \R  Q_0$ is normalised for ($ Q, d$) if it belongs to the hyperplane
	\begin{align*} 
		\mathcal{H} := \big\lbrace \sum_{v \in  Q_0} d_v \theta_v = 0 \big\rbrace \subset \R  Q_0. 
	\end{align*}
	With this normalisation, $d$ is $\theta$-coprime if, for all $0<e<d$, we have $\theta(e)\neq 0$.
	
	Let $\theta \in \mathbb{R}  Q_0 $ be a stability vector for $ Q$. We extend it to an element $\tilde{\theta} \in V$ by
	\begin{align}\label{DiagonalEmbedding}
		\tilde{\theta}(e_{v,k}) := \theta_v. 
	\end{align}
	
	\begin{definition}
		Let $\zeta$ be a $\Sigma \mathfrak{A}$-regular element belonging to the same chamber of $V$ as $-\tilde{\theta}$. We define the global Jeffrey-Kirwan residue 
		\begin{align*}
			\jk\!: \text{Mer}(\mathfrak{a}_\mathbb{C}) \rightarrow \mathbb{C}
		\end{align*}
		given by 
		\begin{align*}
			\jk(f, \zeta) = \sum_{x \in \mathfrak{M}_{sing}} \text{JK}^{\mathfrak{A}_x}_{\zeta} \left(f\left(\ast + x \right)\right).
		\end{align*}
	\end{definition}
	
	\begin{rmk}
		Clearly, if $\zeta$ is $\mathfrak{A}$-regular, then it is also $\mathfrak{A}_x$-regular for every isolated intersection $x \in \mathfrak{M}_{\text{sing}}$. The converse holds too. Let $\zeta$ be $\mathfrak{A}_x$-regular for every $x \in \mathfrak{M}_{\text{sing}}$. Chosen a set $S \subseteq \mathfrak{A}$ of $\text{dim}(V)-1$ linearly independent elements we can complete it to a basis $B \subseteq \mathfrak{A}$ of $V$. We have that the corresponding affine hyperplanes intersect in a single point $x \in \mathfrak{M}_{\text{sing}}$ for which we clearly have $B \subseteq \mathfrak{A}_x$. Since $\zeta$ is $\mathfrak{A}_x$-regular, $\zeta$ does not belong to the wall $\sum_{s \in S} \mathbb{R} \cdot s$.
	\end{rmk}
	
	\begin{rmk}
		Note that, in general, $\jk(f, \zeta)$ depends on the choice of $\zeta$. However, if $f\in \text{Mer}(\mathfrak{a}_\mathbb{C})$ is such that, for every $x \in \mathfrak{M}_{sing}$, there is a ball $D \subset \mathfrak{a}_\mathbb{C}$ around the origin such that $f$ is holomorphic on the $\mathfrak{A}_x$-regular locus 
		\begin{equation*}
			(x + D)\setminus \bigcup_{l \in \mathfrak{A}_x} V(l(\ast - x)), 
		\end{equation*}
		then, by Theorem \ref{JKFlag}, $\jk(f,\zeta)$ depends only on the chamber containing $\zeta$ and not on the particular choice of sum-regular vector.
	\end{rmk}
	
	\begin{definition} Fix $z \in \mathbb{C}\setminus \mathbb{Z}$. The one-loop factor is the meromorphic function $Z_{1-loop} \in \text{Mer}(\mathfrak{a}_\mathbb{C})$, depending on $z$, given by
		\begin{align*}
			&Z_{1-loop}(u,z) =\\
			& \left(- \frac{\pi z}{\sin(\pi z)} \right)^{\vert d \vert -1} \left( \prod_{r \in \mathfrak{R}_\mathbb{C}} \frac{\sin(\pi z r(u))}{\sin(\pi z( r (u) - 1 ))} \right) \left( \prod_{\rho \in \mathcal{W}_\mathbb{C}} \frac{\sin(\pi z( \rho(u) + R_\rho - 1))}{\sin(\pi z(\rho (u) + R_\rho))} \right)^{m_\rho}.
		\end{align*}
	\end{definition}
	
	Suppose $\tilde{\theta}$ is a $\mathfrak{A}$-regular vector and that $\mathfrak{A}_x$ is projective for every $x \in \mathfrak{M}_{sing}$ (the latter condition holds automatically if $\mathfrak{A}_x$ is a basis; this will always be the case in our applications). The quantity of physical interest (i.e., the $\tau \to {\rm i} \infty$ limit of the corresponding partition function on $T^2 = \C/\bra1, \tau\ket$) is given by
	\begin{equation*}
		\frac{1}{\prod_{v \in  Q_0}  d_v !} \jk\left(Z_{1-loop}\left(u, z \right),\zeta\right),\,\text{ for }   z \in \mathbb{C}\setminus\mathbb{Z}.
	\end{equation*}
	
	If the quiver moduli space $\mathcal{M}^{\theta-sst}_d$ is smooth of dimension $D$, then physics predicts that its Poincar\'e polynomial in singular cohomology $P$ is determined by
	\begin{align*}
		P(e^{i\pi z}) = e^{i \pi z D} \frac{1}{\prod_{v \in  Q_0} d_v !} \jk\left(Z_{1-loop}\left(u, z \right),\zeta\right),\,\text{ for } z \in \mathbb{C}\setminus\mathbb{Z}.
	\end{align*}
	
	More generally, if $\mathcal{M}^{\theta-sst}_d$ is singular, then the Poincar\'e polynomial should be replaced by a suitable refined (Donaldson-Thomas) quiver invariant \cite{js, ks, sven}.
	
	Note that, for $\alpha, \beta \in \mathbb{R}$, we have
	\begin{align*}
		\frac{\sin(\pi z(y+ \alpha))}{\sin(\pi z(y+ \beta))} \xrightarrow{z \rightarrow 0} \frac{y+ \alpha}{y+ \beta},
	\end{align*}
	locally uniformly for $y \in \mathbb{C}^*$. Thus, locally uniformly on $\mathfrak{a}_\mathbb{C}^{reg}$, the one-loop factor $Z_{1-loop}(u, z)$ converges to the meromorphic function      
	\begin{align}\label{functionForEuler}
		\opZ_Q(u) := (-1)^{\vert d \vert -1}\left( \prod_{r \in \mathfrak{R}_\mathbb{C}} \frac{ r(u)}{ r (u) - 1} \right) \prod_{\rho \in \mathcal{W}_\mathbb{C}} \left(\frac{\rho(u) + R_\rho - 1}{\rho (u) + R_\rho} \right)^{m_\rho} \in \text{Mer}(\mathfrak{a}_\mathbb{C}). 
	\end{align}
	\begin{rmk}\label{JKSignRemark}
		Note that this normalisation for $\opZ_Q$ differs from \eqref{MeroForms} by a sign $(-1)^D$, with 
		\begin{align*}
			D	= \sum_{\alpha \in Q_1} d_{t(\alpha)} d_{h(\alpha)} - \vert d \vert + 1. 
		\end{align*}
		In the rest of the paper we follow this sign convention.
	\end{rmk}
	
	\begin{lem} As $z \to 0$, we have
		\begin{equation*}
			\jk\left(Z_{1-loop}\left(u, z \right),\zeta\right) \to \jk(\opZ_Q, \zeta). 
		\end{equation*} 
	\end{lem}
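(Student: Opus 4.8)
The plan is to establish convergence $\jk(Z_{1-loop}(u,z),\zeta) \to \jk(\opZ_Q,\zeta)$ by reducing to the behaviour at each singular point and invoking the local uniform convergence $Z_{1-loop}(u,z) \to \opZ_Q(u)$ on $\mathfrak{a}_\mathbb{C}^{reg}$ already recorded before the statement. By definition,
\begin{equation*}
\jk(f,\zeta) = \sum_{x \in \mathfrak{M}_{sing}} \text{JK}^{\mathfrak{A}_x}_{\zeta}\big(f(\ast + x)\big),
\end{equation*}
a finite sum, so it suffices to show, for each fixed $x \in \mathfrak{M}_{sing}$, that $\text{JK}^{\mathfrak{A}_x}_{\zeta}\big(Z_{1-loop}(\ast + x, z)\big) \to \text{JK}^{\mathfrak{A}_x}_{\zeta}\big(\opZ_Q(\ast + x)\big)$ as $z \to 0$. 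Here I would use the characterisation of $\text{JK}^{\mathfrak{A}_x}_{\zeta}$ as a finite signed sum of flag residues $\sum_{\mathfrak{F} \in \mathcal{FL}^+(\mathfrak{A}_x,\zeta)} \nu(\mathfrak{F})\operatorname{Res}_{\mathfrak{F}}$ from Definition \ref{JKDef}, so everything comes down to proving that each iterated residue operation $\operatorname{Res}_{\mathfrak{F}}$ is continuous with respect to the relevant mode of convergence of the meromorphic germs near $x$.

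The key point is to identify the precise space of germs in which the convergence takes place and in which $\operatorname{Res}_{\mathfrak{F}}$ is continuous. Near a point $x \in \mathfrak{M}_{sing}$, the affine hyperplanes through $x$ are exactly the $V(l(\ast - x))$ for $l \in \mathfrak{A}_x$, and both $Z_{1-loop}(\ast + x, z)$ and $\opZ_Q(\ast + x)$ are, after translating $x$ to the origin, meromorphic functions whose polar locus is contained in the fixed linear arrangement $\bigcup_{l \in \mathfrak{A}_x} V(l)$ — for $Z_{1-loop}$ one must check that the extra zeros and poles of the $\sin$ factors, coming from the other lattice translates $V(r(u) - k)$ and $V(\rho(u) + R_\rho - k)$, do \emph{not} pass through $x$ for $z$ small, which holds because $x$ is a fixed isolated point of the arrangement for $\opZ_Q$ and the perturbing hyperplanes move continuously in $z$. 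Concretely, after choosing a flag basis $\gamma^{\mathfrak{F}}$ as in Lemma \ref{basisLem} and the induced coordinates on $\mathbb{C}^n$, one writes $Z_{1-loop}(\gamma^{\mathfrak{F}}\text{-coords} + x, z) = g_z(w)/\prod_{l} l(w)^{N_l}$ with $g_z$ holomorphic on a fixed polydisc and $g_z \to g_0$ uniformly on compacta there (this uniform convergence of the numerators follows from the stated locally uniform convergence on $\mathfrak{a}_\mathbb{C}^{reg}$ together with the fact that multiplying by the fixed polynomial $\prod_l l(w)^{N_l}$ clears the poles). Then $\operatorname{IR}_0$ of such an expression is computed by an iterated Cauchy integral (or by reading off a finite Taylor coefficient of $g_z$ after the standard partial-fraction manipulation recalled in Appendix \ref{iterResSec}), and uniform convergence $g_z \to g_0$ on a polydisc forces convergence of that finite-order derivative / contour integral. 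Summing over flags $\mathfrak{F}$ and over $x \in \mathfrak{M}_{sing}$, both finite operations, yields the claim.

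The main obstacle is the bookkeeping at the boundary of the regular locus: one must verify that the \emph{poles} of $Z_{1-loop}(\ast + x, z)$ near $x$ are, for all sufficiently small $z$, supported on precisely the arrangement $\{V(l(\ast - x))\}_{l \in \mathfrak{A}_x}$ — with uniformly bounded orders — and that no stray pole coming from $\sin(\pi z(\rho(u) + R_\rho))$ (for $\rho \notin \mathfrak{A}_x$, or from higher lattice shifts) migrates onto this arrangement as $z \to 0$. This is where the regularity assumption on $\tilde\theta$ (hence on $\zeta$), the projectivity of $\mathfrak{A}_x$, and the discreteness of $\mathfrak{M}_{sing}$ are used: they guarantee that $x$ is an isolated singular point and that the ``competing'' hyperplanes $V(r(u) - k)$, $V(\rho(u) + R_\rho - k)$ for $k \neq 1$, resp. $k \neq 0$, are at positive distance from $x$, uniformly for small $z$. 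Once this is in place, the convergence of each $\operatorname{Res}_{\mathfrak{F}}$ is a routine continuity statement for iterated residues under uniform convergence of holomorphic numerators, and the lemma follows by finiteness of all the sums involved.
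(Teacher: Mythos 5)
Your proposal is essentially the paper's argument: reduce over the finite set $\mathfrak{M}_{sing}$, identify the local JK residue with iterated residues, and pass to the limit using uniform convergence of $Z_{1-loop}$ on a product of annuli around $x$ contained in $\mathfrak{a}_\mathbb{C}^{reg}$. The paper streamlines the second step by invoking Corollary \ref{JKBasisCor} (since $\mathfrak{A}_x$ is a basis in the cases of interest, the local JK residue is a single iterated residue, not a signed sum over flags) and then simply cites Proposition \ref{UniformConvIteratedRes} for the continuity statement, which you instead re-derive inline via the numerator trick $g_z = Z_{1-loop}\cdot\prod_l l^{N_l}$; both are fine. One small imprecision in your justification that no stray pole enters a fixed neighbourhood of $x$: you say the competing hyperplanes ``move continuously in $z$,'' but in fact the zeros of $\sin(\pi z(\cdot))$ other than the $k=0$ one sit at distance $\sim |k|/|z|$, so they escape to infinity as $z\to0$ rather than moving continuously; this is what actually guarantees they leave any fixed polydisc for small $z$, and is the hypothesis under which Proposition \ref{UniformConvIteratedRes} applies. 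With that correction, the argument is sound and matches the paper's.
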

	\begin{proof} Let $x \in \mathfrak{M}_{sing}$. We know that there is an open ball $D \subset \mathfrak{a}_\mathbb{C}$ around $x$ such that $\mathfrak{a}_\mathbb{C}^{reg} \cap D$ is the complement in $D$ of a finite number of hyperplanes (corresponding to the set of functionals $\mathfrak{A}_x$). This implies that there is a product of annuli $A \subset D$, centred at $x$, such that $A \subset \mathfrak{a}_\mathbb{C}^{reg}$, hence satisfying
		\begin{align} 
			Z_{1-loop}(*, z)_{\vert A} \xrightarrow{z\rightarrow 0} Z_Q(*)_{\vert A} 
		\end{align}
		uniformly. Corollary \ref{JKBasisCor} and Proposition \ref{UniformConvIteratedRes} now imply that 
		\begin{align*} 
			\lim_{z \rightarrow 0} \jk_{\zeta}^{\mathfrak{A}_x}(Z_{1-loop}(* + x, z)) = \jk_{\zeta}^{\mathfrak{A}_x}(Z_Q(*+ x)),  
		\end{align*}
		for all $x \in \mathfrak{M}_{sing}$, as required.
	\end{proof}
	In particular, if $\mathcal{M}^{\theta-sst}_d$ is smooth, physics predicts the identity
	\begin{align*}
		\chi\left(\mathcal{M}^{\theta-sst}_d( Q) \right) = \frac{1}{\prod_{v \in  Q_0} d_v !} \jk(\opZ_Q, \zeta).
	\end{align*}
	\begin{rmk}
		This coincides with the conjectural equality (\ref{DTJK}) since in this case $\bar{\chi}(d, \theta) = (-1)^D\chi(\mathcal{M}_d^{\theta-sst})$ (see e.g. \cite{beau}, equation (1.1)).
	\end{rmk}
	More generally, if $\mathcal{M}^{\theta-sst}_d$ is singular, then the topological Euler characteristic should be replaced by a suitable generalised (Donaldson-Thomas) quiver invariant \cite{js, ks}.	
	
	\section{Abelianisation}\label{AbelianSec}
	
	In this Section we discuss abelianisation for JK residues and quiver invariants, following \cite{beau, mps, rsw}. 
	
	Fix a quiver $Q$ without loops or oriented cycles, with dimension vector $d$, and let $i \in Q_0$ denote a fixed vertex. Abelianisation results  are conveniently expressed in terms of a new, infinite ``blown-up" quiver $\widehat{Q}^i$, obtained by replacing the vertex $i$ with vertices $i_{k, l}$, $k, l \geq 1$ (all other vertices are unchanged). Similarly, a single arrow $i \to j$ ($j \to i$) is replaced by $l$ arrows $i_{k, l} \to j$ (respectively $j \to i_{k, l}$) for all $k, l \geq 1$.
	
	A multiplicity vector $m_* \vdash d_i$, that is, a collection of integers $m_l$, $l \geq 1$, satisfying $\sum_l l m_l = d_i$, defines a dimension vector $\widehat{d}$ for $\widehat{Q}^i$ by the rule 
	\begin{equation*}
		\widehat{d}_{i_{k, l}}(m_*) = 
		\begin{cases}
			1,  \quad 1 \leq k \leq m_l\\
			0,  \quad k > m_l.
		\end{cases}  
	\end{equation*}
	Similarly, a stability vector $\zeta$ for $Q$ induces a stability vector $\widehat{\zeta}$ for $\widehat{Q}^i$, defined by
	\begin{equation*}
		\widehat{\zeta}_{i_{k, l}} = l \zeta_i,\, k, l\geq 1.
	\end{equation*}
	Note that if $\zeta$ is normalised then we have $\sum_{k,l} \widehat{d}_{i_{k,l}} \widehat{\zeta}_{i_{k,l}} = \sum_l m_l l \zeta_i = d_i \zeta_i$, hence $\sum_{w \in \widehat{Q}^i_0} \widehat{d}_w \widehat{\zeta}_w = \sum_{v \in Q_0} d_v \zeta_v = 0$ and so $\widehat{\zeta}$ is a normalised stability vector.
	
	The new pair ($\widehat{Q}^i$, $\widehat{d}$) induces new spaces $\widehat{\mathfrak{a}}^i$, $\widehat{V}^i$ and functionals $\widehat{\mathfrak{A}}^i:= \widehat{\mathfrak{R}}^i \cup \widehat{\mathcal{W}}^i$. There is an isomorphism of $\mathbb{R}$-linear spaces induced by the identity $\sum_{v \in Q_0} d_{v} = \sum_{w \in \widehat{Q}_0} \widehat{d}_{w}$,
	\begin{align*}
		\bigoplus_{v \in  Q_0} \mathbb{R}^{d_v} \cong \bigoplus_{w \in \widehat{ Q}_0} \mathbb{R}^{\widehat{d}_w}.
	\end{align*}
	The choice of a reference coordinate $\overline{u}_{s}$ induces an isomorphism between the corresponding subspaces $\mathfrak{a} \cong \widehat{\mathfrak{a}}^i$. 
	We will identify these two linear spaces via this isomorphism. We will work in the second space, where the coordinates associated with a vertex $w \in Q_0\setminus \lbrace i \rbrace$ will be denoted by $u_{w,1}, \ldots, u_{w, d_w}$ and the coordinate associated with the blown-up node $i_{k,l}$ will be denoted by $u_{i,k,l}$. Let us describe the two sets of functionals $\widehat{\mathcal{W}}^i, \widehat{\mathfrak{R}}^i$ on $\mathfrak{a}^i$.
	\begin{prop}
		We have $\widehat{\mathcal{W}}^i = \mathcal{W}$ and $\widehat{\mathfrak{R}}^i \subseteq \mathfrak{R}$, and equality holds if and only if $d_{i} = 1$. In particular $\widehat{\mathfrak{A}}^i \subseteq \mathfrak{A}$.
	\end{prop}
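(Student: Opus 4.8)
The plan is to prove the Proposition by unwinding the definitions of $\widehat{\mathfrak R}^i$ and $\widehat{\mathcal W}^i$ directly, using the canonical identification $\mathfrak{a}\cong\widehat{\mathfrak a}^i$ fixed above. In particular, that identification matches the blown-up ``$i$-coordinates'' $u_{i,k,l}$ (one for each active blown-up vertex $i_{k,l}$) with the coordinates $u_{i,1},\dots,u_{i,d_i}$ of $Q$, and is compatible with the removal of the reference coordinate; I will use it throughout to compare functionals on $\mathfrak{a}$.

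First I would treat the weights. A weight of $(\widehat{Q}^i,\widehat d)$ is a functional $u_{w',t'}-u_{w,t}$ attached to an arrow $w\to w'$ of $\widehat{Q}^i$ with $t\le\widehat d_w$, $t'\le\widehat d_{w'}$. If neither endpoint is a blown-up vertex, the arrow is literally an arrow of $Q$ between vertices of the same dimensions, and the associated functionals are exactly the corresponding weights of $Q$. If instead $w'=i_{k,l}$ is an active blown-up vertex, so $\widehat d_{i_{k,l}}=1$, and $w=j\in Q_0\setminus\{i\}$, then this arrow descends from an arrow $j\to i$ of $Q$; the only coordinate at $i_{k,l}$ is $u_{i,k,l}$, which under $\mathfrak{a}\cong\widehat{\mathfrak a}^i$ is one of the $u_{i,s'}$, so the functional $u_{i,k,l}-u_{j,t}$ is precisely the weight $u_{i,s'}-u_{j,t}$ of $Q$. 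The point to record is that the $l$ parallel arrows $j\to i_{k,l}$ produced by the subdivision all yield the same functional, so $\widehat{\mathcal W}^i$ is not enlarged; and conversely every weight of $Q$ incident to $i$ arises in this way, because the $u_{i,k,l}$ exhaust the $u_{i,s'}$. The symmetric case $w=i_{k,l}$, $w'=j$ is identical. Hence $\widehat{\mathcal W}^i=\mathcal W$.

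Next I would treat the roots. A root of $(\widehat{Q}^i,\widehat d)$ has the form $u_{w,t'}-u_{w,t}$ with $t\neq t'$, $t,t'\le\widehat d_w$. For $w$ an old vertex $v\neq i$ one has $\widehat d_v=d_v$, so these are exactly the roots of $Q$ at $v$; for $w=i_{k,l}$ a blown-up vertex one has $\widehat d_{i_{k,l}}\le1$, so no pair $t\neq t'$ exists and no root is contributed there. Therefore $\widehat{\mathfrak R}^i$ is precisely the set of roots of $Q$ supported away from $i$, which is contained in $\mathfrak R$; the inclusion is strict exactly when $Q$ carries a root supported at $i$, i.e. when $d_i\ge2$, so $\widehat{\mathfrak R}^i=\mathfrak R$ if and only if $d_i=1$ (when $d_i=1$ the vertex $i$ is not genuinely subdivided and $\widehat{Q}^i$ merely renames it). Combining, $\widehat{\mathfrak A}^i=\widehat{\mathfrak R}^i\cup\widehat{\mathcal W}^i\subseteq\mathfrak R\cup\mathcal W=\mathfrak A$.

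I do not expect a serious obstacle here: the argument is essentially bookkeeping. The one step requiring a little care is, in the weight computation, checking that the coordinate matching supplied by $\mathfrak{a}\cong\widehat{\mathfrak a}^i$ makes the two weight sets coincide on the nose — rather than merely agree after relabelling — and that the parallel arrows introduced by the subdivision do not produce new functionals.
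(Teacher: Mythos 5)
Your proof is correct and follows essentially the same bookkeeping as the paper's: match weights arrow-by-arrow via the identification $\mathfrak a\cong\widehat{\mathfrak a}^i$, noting that the parallel arrows into blown-up vertices reproduce but do not enlarge the weight set, and observe that each $i_{k,l}$ has $\widehat d_{i_{k,l}}\le 1$ and so contributes no roots, giving $\widehat{\mathfrak R}^i\subsetneq\mathfrak R$ precisely when $d_i\ge 2$. The only cosmetic difference is that the paper phrases the weight comparison in terms of the fibers of the surjection onto $\bar Q_1$ from Remark \ref{MapWeightsArrows}, whereas you argue directly per arrow; both verifications are equivalent.
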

	\begin{proof}
		Let us denote by $\pi$ the projection $\widehat{Q}^i_1 \rightarrow Q_1$. By Remark \ref{MapWeightsArrows} we have a projection $\widehat{\mathcal{W}} \rightarrow \overline{\widehat{ Q}}_1$ whose fibre over the arrow $\alpha$ is the family of functionals
		\begin{align*}
			\widehat{\mathcal{W}}^i_\alpha = \left\lbrace u \mapsto u_{h(\alpha),k} - u_{t(\alpha),j} \, :  k \in \lbrace 1, \ldots, \widehat{d}_{h(\alpha)} \rbrace, j \in \lbrace 1, \ldots, \widehat{d}_{t(\alpha)} \rbrace \right \rbrace.
		\end{align*}
		If the arrow $\alpha$ is between two vertices in $ Q_0 \setminus \lbrace i \rbrace$ then the functionals in the corresponding fibre are precisely those in the fibre $\mathcal{W}_{\pi(\alpha)}$ over $\pi(\alpha)$ for the projection $\mathcal{W}\rightarrow \overline{ Q}_1$. If this is not the case then one (and only one) of $h(\alpha)$, $t(\alpha)$ is a vertex of the form $i_{k,l}$. We consider only the case $t(\alpha) = v \in  Q_0 \setminus \lbrace i\rbrace$, $h(\alpha) = i_{k,l}$ since the other case is completely analogous. The fibre above this arrow is the family of functionals
		\begin{align*}
			\widehat{\mathcal{W}}_\alpha = \left \lbrace u \mapsto u_{i,k,l} - u_{v,j} \, :  j \in \lbrace 1, \ldots, {d}_{t(\alpha)} \rbrace \right \rbrace
		\end{align*}
		which is clearly contained in the fibre $\mathcal{W}_{\pi(\alpha)}$. 
		It is immediate to see that in this case
		\begin{align*}
			\mathcal{W}_{\pi(\alpha)} = \bigcup_{\beta \in \pi^{-1}(\pi (\alpha))} \widehat{\mathcal{W}}_\beta
		\end{align*}
		Let us now consider the elements of $\widehat{\mathfrak{R}}$. They correspond to vertices, meaning that there is a projection $\widehat{\mathfrak{R}}\rightarrow \widehat{ Q}_0$ whose fibre above $w$ is the family of functionals 
		\begin{align*}
			\widehat{\mathfrak{R}}_w := \left\lbrace u \mapsto u_{w,k} - u_{w,j} \, :  k \neq j \in \lbrace 1, \ldots, \widehat{N}_{w} \rbrace\right \rbrace.
		\end{align*}
		If $w \in  Q_0 \setminus \lbrace i \rbrace$, then this is precisely $\mathfrak{R}_w$, the fibre of $\mathfrak{R} \rightarrow  Q_0$ over $w$. On the other hand we have that $\widehat{\mathfrak{R}}_{i_{k,l}} = \emptyset$ since $\widehat{d}_{i_{k,l}} = 1$. 
	\end{proof}
	This proposition shows, in particular, that the hyperplane arrangement defined in the dual space $V$ by $\widehat{\mathfrak{A}}^i$ is contained (strictly if $d_{i}>1$) in the one generated by $\mathfrak{A}$. 
	\begin{cor}
		Let $\xi$ be a ($\Sigma$)$\mathfrak{A}$-regular element of $V$. Then it is ($\Sigma$)$\widehat{\mathfrak{A}}^i$-regular.
	\end{cor}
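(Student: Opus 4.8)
The plan is to deduce the corollary directly from the preceding proposition, which establishes the inclusion of functionals $\widehat{\mathfrak{A}}^i \subseteq \mathfrak{A}$. The point is that regularity (and sum-regularity) is a condition of \emph{avoiding} a certain finite union of linear subspaces of $V$ spanned by subsets of the given set of functionals, so if the set of functionals shrinks, the arrangement shrinks, and a vector avoiding the larger arrangement automatically avoids the smaller one.

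First I would treat the case of $\mathfrak{A}$-regularity. By definition, $\xi$ is $\mathfrak{A}$-regular if $\xi \notin \bigcup_{\substack{I \subseteq \mathfrak{A},\, |I| = n-1}} \mathrm{Span}_{\mathbb{R}}(I)$, where $n = \dim V$. Since $\widehat{\mathfrak{A}}^i \subseteq \mathfrak{A}$, every subset $I \subseteq \widehat{\mathfrak{A}}^i$ of size $n-1$ is also a subset of $\mathfrak{A}$ of size $n-1$, so $\bigcup_{I \subseteq \widehat{\mathfrak{A}}^i,\, |I| = n-1} \mathrm{Span}_{\mathbb{R}}(I) \subseteq \bigcup_{I \subseteq \mathfrak{A},\, |I| = n-1} \mathrm{Span}_{\mathbb{R}}(I)$. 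Hence $\xi$, being outside the right-hand union, is outside the left-hand one, i.e., $\xi$ is $\widehat{\mathfrak{A}}^i$-regular.

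Next I would handle $\Sigma\mathfrak{A}$-regularity. Here one must be slightly careful: the relevant set of functionals is $\Sigma\widehat{\mathfrak{A}}^i$, the set of all sums of distinct elements of $\widehat{\mathfrak{A}}^i$, and one needs $\Sigma\widehat{\mathfrak{A}}^i \subseteq \Sigma\mathfrak{A}$. But this is immediate: a sum of distinct elements of $\widehat{\mathfrak{A}}^i$ is, by the inclusion $\widehat{\mathfrak{A}}^i \subseteq \mathfrak{A}$, also a sum of distinct elements of $\mathfrak{A}$, hence lies in $\Sigma\mathfrak{A}$. With this inclusion in hand, the same argument as before applies verbatim with $\mathfrak{A}$ replaced by $\Sigma\mathfrak{A}$ and $\widehat{\mathfrak{A}}^i$ replaced by $\Sigma\widehat{\mathfrak{A}}^i$: the $(n-1)$-element spanning subsets of $\Sigma\widehat{\mathfrak{A}}^i$ are among those of $\Sigma\mathfrak{A}$, so avoiding the larger arrangement forces avoiding the smaller.

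I do not anticipate any genuine obstacle here; the only subtlety worth flagging explicitly is the distinction between the containment of the sets of functionals and the containment of the sets of \emph{sums} of functionals, but the latter follows trivially from the former. The corollary is thus a formal consequence of the preceding proposition together with the definitions of (sum-)regularity, and the proof amounts to spelling out the monotonicity of the ``regular locus'' in the set of functionals defining the arrangement.
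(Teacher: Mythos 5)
Your proof is correct and matches the paper's (implicit) reasoning: the paper simply remarks, after the preceding Proposition, that the hyperplane arrangement defined by $\widehat{\mathfrak{A}}^i$ is contained in the one defined by $\mathfrak{A}$, and leaves the Corollary as an immediate consequence; you have spelled out precisely that monotonicity argument, correctly handling both the $\mathfrak{A}$-regularity case and the $\Sigma\mathfrak{A}$-regularity case via the inclusion $\Sigma\widehat{\mathfrak{A}}^i \subseteq \Sigma\mathfrak{A}$.
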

	
	A physical argument (see \cite{beau} Section 3.4) suggests a remarkable abelianisation identity for JK residues:
	\begin{equation}\label{JKmps}
		\jk(\opZ_Q(d), \zeta) = d_i!\sum_{m_* \vdash d_i} \prod_{l\geq1} \frac{1}{m_l!}\left(\frac{(-1)^{l-1}}{l^2}\right)^{m_l} \jk(\opZ_{\widehat{Q}^i}(\widehat{d}(m_*)), \widehat{\zeta}).
	\end{equation}
	The right hand side is well defined by the previous Corollary.
	
	The analogue of \eqref{JKmps} is known in the case of quiver invariants \cite{mps, rsw}, namely we have
	\begin{equation}\label{DTmps}
		\bar{\chi}_Q(d, \zeta) = \sum_{m_* \vdash d_i} \prod_{l\geq1} \frac{1}{m_l!}\left(\frac{(-1)^{l-1}}{l^2}\right)^{m_l} \bar{\chi}_{\widehat{Q}^i}(d(m_*), \widehat{\zeta}),
	\end{equation}
	so indeed \eqref{JKmps} is compatible with the conjectural identity \eqref{DTJK}. 
	\begin{rmk} However, the physical argument for \eqref{JKmps} is not based on the correspondence with quiver invariants, but rather on an application of the ``Cauchy-Bose identity"
		\begin{equation*}
			\det \frac{1}{\sinh(\mu_i - \nu_j)} = \frac{\prod_{i < j} \sinh(\mu_i - \nu_j)\sinh(\nu_j - \nu_i)}{\prod_{i, j} \sinh(\mu_i - \nu_j)}
		\end{equation*}
		to the product
		\begin{equation*}
			\prod^{d_i}_{s \neq s' = 1} \frac{\sin \pi\hbar(u_{i,s'} - u_{i, s})}{\sin \pi\hbar(u_{i, s} - u_{i,s'}-1)},
		\end{equation*}
		appearing in the 1-loop factor $Z_{1-loop}(u,z)$, which specialises to the factor
		\begin{equation*}
			\prod^{d_i}_{s \neq s' = 1} \frac{u_{i,s'} - u_{i, s}}{u_{i, s} - u_{i,s'}-1}
		\end{equation*}
		appearing in $\opZ_Q(d)$.
	\end{rmk}
	Clearly, applying \eqref{JKmps} to each vertex $i \in Q_0$ makes the gauge group completely abelian: with a straightforward extension of our notation, we have a conjectural identity
	\begin{equation}\label{JKmpsTotal}
		\jk(\opZ_Q(d), \zeta) = \prod_{i\in Q_0}d_i!\sum_{m_* \vdash d} \prod_{l\geq1} \frac{1}{m_{i,l}!}\left(\frac{(-1)^{l-1}}{l^2}\right)^{m_{i,l}} \jk(\opZ_{\widehat{Q}}(\widehat{d}(m_*)), \widehat{\zeta}).
	\end{equation}
	In the present paper, we assume that this conjectural identity holds or, equivalently for our purposes, we take the right hand side of \eqref{JKmpsTotal} as the definition of the relevant residue operation.
	\begin{definition}\label{JKabDef} The abelianised JK residue of the meromorphic form $\opZ_Q(d)$ is given by 
		\begin{equation*}
			\jk_{ab}(\opZ_Q(d), \zeta) = \prod_{i\in Q_0}d_i!\sum_{m_* \vdash d} \prod_{l\geq1} \frac{1}{m_{i,l}!}\left(\frac{(-1)^{l-1}}{l^2}\right)^{m_{i,l}} \jk(\opZ_{\widehat{Q}}(\widehat{d}(m_*)), \widehat{\zeta}).
		\end{equation*}
		Each meromorphic form $\opZ_{\widehat{Q}}(\widehat{d}(m_*))$ is defined on a torus 
		\begin{equation*}
			\mathbb{T}(\widehat{d}(m_*)) = \big(\prod_{i\in \widehat{Q}_0}(\C^*)^{\widehat{d}_i(m_*)}\big)/\C^*.
		\end{equation*}
	\end{definition}
	We spell out the details for the complete bipartite quiver $K(\ell_1, \ell_2)$: this is given by $\ell_1$ sources, with a single arrow connecting each source to each of $\ell_2$ sinks. We endow this with a compatible stability vector, i.e. one with constant value $\zeta_1$ ($\zeta_2$) on all sources (respectively, sinks). Then, $\widehat{Q}$ above may be replaced by a quiver $\cN$ with
	\begin{align*}
		&\cN_0 = \{i_{(w, m)}:(w, m) \in \N^2\} \cup \{j_{(w, m)}:(w, m) \in \N^2\},\\
		&\cN_1 = \{\alpha_1, \ldots, \alpha_{w w'}\!: i_{(w,m)} \to j_{(w',m')}\}.
	\end{align*}
	A dimension vector for $K(\ell_1, \ell_2)$ is the same as a pair of ordered partitions
	\begin{equation*}
		(P_1, P_2) = \big(\sum^{\ell_1}_{i=1} p_{1i}, \sum^{\ell_2}_{j=1} p_{2j}\big).
	\end{equation*}
	Refinements $(k^1, k^2) \vdash (P_1, P_2)$ are defined by
	\begin{equation*}
		p_{1i} = \sum_w w k^1_{w i},\,p_{2j} = \sum_w w k^2_{w j}. 
	\end{equation*}
	The number of entries of weight $w$ in $k^i$ is 
	\begin{equation*}
		m_w(k^i) = \sum^{\ell_i}_{j=1} k^i_{wj},\, i=1, 2.
	\end{equation*}
	Thus, refinements $(k^1, k^2) \vdash (P_1, P_2)$ give dimension vectors for $\cN$:
	\begin{equation*}
		d(k^1, k^2)(q_{(w,m)}) = 
		\begin{cases}
			1   \quad 1 \leq m \leq m_w(k^p)\\
			0   \quad m > m_w(k^p),
		\end{cases}  
	\end{equation*}
	with $p = 1, 2$ according to $q = i, j$. Finally, the stability vector $\widehat{\zeta}$ for $\cN$ is given by
	\begin{equation*}
		\widehat{\zeta}(i_{(w, m)}) = w \zeta_1,\,\widehat{\zeta}(j_{(w, m)}) = w \zeta_2. 
	\end{equation*} 
	Then, the definition of abelianised JK residues \eqref{JKmpsTotal} may be written in the form
	\begin{align}\label{JKmpsBipartite}
		\nonumber&\frac{1}{(P_1, P_2)!}\jk_{ab}(\opZ_{K(\ell_1, \ell_2)}(P_1, P_2), \zeta)\\
		&= \sum_{(k^1, k^2) \vdash (P_1, P_2)} \prod^2_{i=1} \prod^{l_i}_{j=1} \prod_w \frac{(-1)^{k^i_{w,j}(w-1)}}{k^i_{w,j}!w^{2k^i_{w,j}}}\jk(\opZ_{\cN}(d(k^1,k^2)), \widehat{\zeta}).
	\end{align}
	The meromorphic form $\opZ_{\cN}(d(k^1,k^2))$ is defined on the torus 
	\begin{equation*}
		\mathbb{T}(d(k^1,k^2)) = \big(\prod_{q_{(w,m)} \in \cN_0} (\C^*)^{d(k^1,k^2)(q_{(w,m)})}\big)/\C^*. 
	\end{equation*}
	It can be written conveniently by introducing the variables  
	\begin{align*}
		\{u_{(w, m)}, \textrm{ corresponding to } i_{w,m} \in \cN_0;\,
		v_{(w', m')}, \textrm{ corresponding to } j_{w',m'} \in \cN_0\}.
	\end{align*}
	Then, we have
	\begin{align*}
		&\opZ_{\cN}(d(k^1,k^2))\\
		&= (-1)^D\prod_w \prod^{m_w(k^1)}_{m=1}\prod_{w'} \prod^{m_{w'}(k^2)}_{m'=1}\left(\frac{v_{(w', m')}-u_{(w, m)}+1-R^{(w,m)}_{(w',m')}/2}{u_{(w, m)}-v_{(w', m')}+R^{(w,m)}_{(w',m')}/2}\right)^{w w'}.
	\end{align*}
	
	As we already observed, the analogue of \eqref{JKmpsBipartite} for quiver invariants is known:
	\begin{align}\label{mpsBipartite}
		\bar{\chi}_{K(\ell_1, \ell_2)}((P_1, P_2), \zeta) = \sum_{(k^1, k^2) \vdash (P_1, P_2)} \prod^2_{i=1} \prod^{\ell_i}_{j=1} \prod_w \frac{(-1)^{k^i_{w,j}(w-1)}}{k^i_{w,j}!w^{2k^i_{w,j}}}\bar{\chi}_{\cN}(d(k^1,k^2), \widehat{\zeta}).
	\end{align}
	\begin{rmk} The identities \eqref{DTmps}, \eqref{mpsBipartite} were established at the level of motives in \cite{rsw}. Strictly speaking this only implies that \eqref{DTmps}, \eqref{mpsBipartite} hold when the dimension vectors $d$ (respectively, $(|P_1|, |P_2|)$) are primitive. However, given the recent developments in the theory (see \cite{sven}), the same proofs work for the generalised quiver invariant $\bar{\chi}$.
	\end{rmk}
	
		\section{Stable spanning trees}\label{SpanningTreeSec}
	
	\begin{definition}
		A connected quiver ${T}$ is called a \textit{tree} if $|{T}_1| = {T}_0 -1$. If $T$ is also a subquiver of $ Q$ we will say that it is a \textit{tree of $ Q$}. If moreover ${T}_0 =  Q_0$ we say that ${T}$ is a \textit{spanning tree of} $ Q$.
	\end{definition}
	
	It is easy to see that a tree does not contain unoriented loops. Moreover, it contains either a source with a single arrow, or a sink with a single arrow. Note that if $ Q$ admits a spanning tree then it is obviously connected. 
	
	Let 
	\begin{equation*}
		\mathbb{I} = \sum_{v \in  Q_0} v \in \Z_{\geq 0}  Q_0
	\end{equation*}
	denote the full abelian dimension vector.
	\begin{definition}
		Let $ Q$ be a connected quiver and let $T$ be a spanning tree of $ Q$. Note that a stability $\theta$ for $ Q$ induces naturally a stability for $T$. We say that $T$ is \textit{$\theta$-stable} if the corresponding moduli space $\mathcal{M}_{\mathbb{I}}^{\theta-st}(T)$ of $\theta$-stable abelian representations is nonempty. The set of $\theta$-stable spanning trees of $ Q$ will be denoted by $N^\theta( Q)$.
	\end{definition}
	
	Note that the projection map $ Q_1 \rightarrow \bar{ Q}_1$ induces a surjective morphism
	\begin{align*}
		N^\theta( Q) \rightarrow N^\theta(\bar{ Q}).
	\end{align*}
	The fibre over $T \in N^\theta(\bar{ Q})$ has cardinality $\prod_{\alpha \in T} m_\alpha$.
	
	A representation of $ Q$ with dimension vector $\mathbb{I}$ is the same as an element $R \in \C  Q_1$, by associating to an arrow $\alpha$ the linear morphism $\C \xrightarrow{\times R_\alpha} \C$. With a slight abuse of notation, we write $\mathbb{I}$ for the abelian representation that associates the identity morphism with every arrow. The following result is standard.
	\begin{lem}\label{IsoOfRep}
		Let $T$ be a tree and consider an abelian representation $R \in \C T_1$ such that $R_\alpha \neq 0$ for all $\alpha \in T_1$. Then $R \cong \mathbb{I}$. Thus, $\mathcal{M}_{\mathbb{I}}^{\theta-st}(T)$ is either empty or a single point. In particular, a spanning tree $T\subseteq  Q$ is $\theta$-stable if and only if its abelian representation $\mathbb{I}$ is $\theta$-stable.
	\end{lem}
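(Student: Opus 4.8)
The plan is to deduce all three assertions from the first one, which is the only place where the tree structure genuinely enters.

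\textbf{Step 1.} First I would prove that if $R \in \C T_1$ satisfies $R_\alpha \neq 0$ for every $\alpha \in T_1$, then $R \cong \mathbb{I}$. Recall that an isomorphism of abelian representations of $T$ is a tuple $(g_v)_{v\in T_0} \in (\C^*)^{T_0}$, and that it sends $R$ to $\mathbb{I}$ precisely when $g_{h(\alpha)}\,R_\alpha = g_{t(\alpha)}$ for every $\alpha \in T_1$. Such a tuple exists by a straightforward induction on $|T_0|$: the case $|T_0| = 1$ is vacuous, and for $|T_0| \geq 2$ one chooses a leaf $v$ of $T$ (a vertex incident to a single arrow $\alpha$, which exists in any finite tree), applies the inductive hypothesis to the tree $T \setminus \{v\}$ to gauge $R$ to $\mathbb{I}$ on the remaining vertices, and then picks $g_v \in \C^*$ --- using $R_\alpha \neq 0$ --- so that the equation attached to $\alpha$ holds as well. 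Equivalently: fix a root $v_0 \in T_0$, set $g_{v_0} = 1$, and let $g_v$ be the product of the $R_\alpha^{\pm 1}$ along the unique reduced path in $T$ from $v_0$ to $v$; this is well defined precisely because a tree has no unoriented cycle, and lies in $\C^*$ because no $R_\alpha$ vanishes.

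\textbf{Step 2.} Next I would show that $\mathcal{M}_{\mathbb{I}}^{\theta-st}(T)$ is empty or a single point. Assume it is nonempty and let $R$ be a $\theta$-stable abelian representation of $T$. I claim $R_\alpha \neq 0$ for all $\alpha \in T_1$. If not, say $R_\alpha = 0$, then deleting the arrow $\alpha$ disconnects the tree $T$ into exactly two connected subtrees, $T'$ containing $t(\alpha)$ and $T''$ containing $h(\alpha)$. Since $\alpha$ is the only arrow of $T$ joining $T'$ and $T''$, and it carries the zero map, the subspaces supported on $T'$ and on $T''$ respectively are subrepresentations of $R$, so $R = R|_{T'} \oplus R|_{T''}$ is a nontrivial decomposition. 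But a $\theta$-stable representation is indecomposable: the slope of $R$ is a convex combination of the slopes of two complementary nonzero subrepresentations, hence cannot be strictly larger (or strictly smaller, whichever convention is in force) than both. This contradiction shows every $R_\alpha \neq 0$, so $R \cong \mathbb{I}$ by Step 1. Hence the only point $\mathcal{M}_{\mathbb{I}}^{\theta-st}(T)$ can contain is $[\mathbb{I}]$, and it does so exactly when $\mathbb{I}$ is $\theta$-stable. The final assertion of the lemma is then immediate, since by definition $T$ is $\theta$-stable iff $\mathcal{M}_{\mathbb{I}}^{\theta-st}(T) \neq \emptyset$.

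I do not anticipate any real difficulty here: the content is elementary graph theory together with the standard fact that a stable object in a finite-length abelian category is indecomposable. The only points deserving a line of care are the combinatorial observation that removing an edge from a finite tree yields precisely two connected components --- which is what makes $R|_{T'} \oplus R|_{T''}$ a genuine direct sum of \emph{sub}representations rather than merely an extension --- and keeping the direction of the slope inequality consistent with the normalisation of $\theta$ fixed earlier; neither is substantive.
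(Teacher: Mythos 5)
Your proof is correct. The paper does not actually supply an argument for this lemma: it labels it as ``standard'' and moves on, so there is nothing to compare against except the natural proof, which is exactly what you have written out. Step 1 (gauging to $\mathbb{I}$ along the unique paths from a chosen root, which is well defined precisely because a tree has no unoriented cycle) and Step 2 (a vanishing $R_\alpha$ on a tree produces a direct-sum decomposition into the two components of $T \setminus \{\alpha\}$, contradicting indecomposability of stable representations, which forces any stable $R$ to have all $R_\alpha \neq 0$ and hence $R \cong \mathbb{I}$) are the expected ingredients, and the slope/$\theta$ argument for indecomposability of stable objects is correctly invoked. Your remark that removing an edge from a finite tree yields exactly two components --- which is what makes both restrictions genuine subrepresentations and not merely a subobject and a quotient --- is indeed the one point worth flagging, and you handled it.
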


	Recall that the space of normalised stability vectors is given by 
	\begin{align*}
		\mathcal{H} = V\big(\sum_{v \in  Q_0} d_v \theta_v \big) \subset \R  Q_0.
	\end{align*}
	Spanning trees of $ Q$ describe certain natural bases of $\mathcal{H}$. Let $e_v$ be the canonical basis of $\R  Q_0 $. For every arrow $\alpha \in  Q_1$, let $e_\alpha := e_{h(\alpha)} - e_{t(\alpha)} \in \mathcal{H}$.
	
	\begin{prop}\label{BasisForH} Let $S \subset  Q_1$. The set $\lbrace e_{\alpha} :  \alpha \in S \rbrace$ is a basis of $\mathcal{H}$ if and only if $S$ is the set of arrows of a spanning tree $T$ of $ Q$. 
	\end{prop}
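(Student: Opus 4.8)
The plan is to reduce this to two elementary facts about the underlying undirected graph of $Q$: an unoriented cycle produces a linear relation among the corresponding vectors $e_\alpha$, and a forest on $n := |Q_0|$ vertices with $c$ connected components has exactly $n-c$ edges. Observe first that $\mathcal{H}$ is a hyperplane in $\R Q_0$ (cut out by $\sum_v \theta_v = 0$ in the abelian case at hand), so $\dim \mathcal{H} = n-1$; consequently, in either direction one of the two conditions forces $|S| = n-1$, and it suffices to keep track of linear independence.

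For ($\Rightarrow$): assume $\{e_\alpha : \alpha \in S\}$ is a basis of $\mathcal{H}$, so $|S| = n-1$, and set $T := (Q_0, S)$. If $T$ contained an unoriented cycle $\alpha_1, \ldots, \alpha_r$, then with appropriate signs one would have $\sum_{i=1}^r \pm\, e_{\alpha_i} = 0$, contradicting linear independence. Hence $T$ is a forest; a forest on $n$ vertices with $n-1$ edges has a single component, so $T$ is connected, i.e., a spanning tree of $Q$.

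For ($\Leftarrow$): assume $S$ is the arrow set of a spanning tree $T$; then $|S| = n-1 = \dim\mathcal{H}$, so it is enough to show that $\{e_\alpha : \alpha \in S\}$ is linearly independent. I would argue by induction on $n$, using that a tree with at least one edge has a leaf $v$, incident to a unique arrow $\alpha_0 \in S$: given $\sum_{\alpha \in S} c_\alpha e_\alpha = 0$, the $v$-component of the left-hand side is $\pm c_{\alpha_0}$ since no other arrow of $S$ meets $v$, whence $c_{\alpha_0} = 0$; deleting $v$ and $\alpha_0$ leaves a spanning tree of the full subquiver on $Q_0 \setminus \{v\}$, and induction concludes. (Equivalently, one could show the $e_\alpha$ span $\mathcal{H}$: fixing a root $v_0$, connectedness of $T$ gives $e_v - e_{v_0}$ in their span for every $v$, by summing signed arrows along the unique path from $v_0$ to $v$ in $T$, and then $\theta = \sum_v \theta_v (e_v - e_{v_0})$ for any $\theta \in \mathcal{H}$, using $\sum_v \theta_v = 0$.)

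This is routine, and I anticipate no genuine obstacle: the step most likely to require care is the sign bookkeeping in the cycle relation for ($\Rightarrow$) and in the path expansion for the alternative form of ($\Leftarrow$), but both are immediate from the definition $e_\alpha = e_{h(\alpha)} - e_{t(\alpha)}$. The proposition is, in essence, the classical identification of spanning trees of a graph with bases of its cut space.
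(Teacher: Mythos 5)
Your proof is correct, and it reaches the same conclusion by a genuinely different route from the paper's, so a brief comparison is warranted. For the direction ``basis $\Rightarrow$ spanning tree'', the paper argues directly with the linear algebra: it first shows that $T_0 = Q_0$ by observing that if a vertex $v$ is missed then $e_v - e_w$ (for a fixed reference $w$) cannot lie in the span of the $e_\alpha$, and then rules out disconnectedness by the same device applied to a vertex $v$ in a component not containing $w$, after decomposing $\R Q_0$ along the component. You instead invoke the classical graph-theoretic dichotomy: an unoriented cycle in $(Q_0, S)$ yields a signed relation $\sum \pm e_{\alpha_i} = 0$, so $(Q_0, S)$ is a forest, and a forest on $n$ vertices with $n-1$ edges is automatically connected. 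This is shorter and more conceptual (it is, as you say, the identification of spanning trees with bases of the cut space), at the cost of invoking the forest--component count rather than staying purely inside the ambient vector space. For the converse, the paper establishes spanning by expanding $e_v - e_w$ as a signed sum along the unique unoriented path from $w$ to $v$ in $T$, which is exactly your parenthetical alternative; your primary argument is a leaf-deletion induction that establishes linear independence instead, which is equally clean. One small point worth making explicit in either direction: the hypothesis that $\{e_\alpha : \alpha \in S\}$ is a basis should be read as saying the assignment $\alpha \mapsto e_\alpha$ is injective on $S$ with linearly independent image; otherwise parallel arrows in $S$ would collapse in the set and the count $|S| = n-1$ does not immediately follow. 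This is implicit in both your argument and the paper's, and in your approach it is in any case subsumed by the forest criterion (a repeated $e_\alpha$ comes from an unoriented $2$-cycle).
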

	\begin{proof}
		Fix $w \in  Q_0$.\\
		$(\Rightarrow)$ Consider the subquiver $T \subseteq  Q$ obtained by taking as nodes the heads and tails of the elements of $S$ and by considering $S$ as the set of arrows. Assume by contradiction that there is a vertex $v \in  Q_0 \setminus T_0$. This implies that the component of $e_\alpha$ along $e_v$ vanishes for all $\alpha \in S$. Then the vector $e_v - e_w \in \mathcal{H}$ is not contained in $\text{Span}_\mathbb{R}\{e_\alpha : \alpha \in S\}$, which is a contradiction. Hence $T_0 = Q_0$. By assumption, $|S| = \text{dim}_{\mathbb{R}}(\mathcal{H}) = | Q_0 | -1$. Assume by contradiction that $T$ is disconnected and let $C$ be a connected component not containing the vertex $w$. We have the decomposition
		\begin{align}\label{DirectDecomposition}
			\R  Q_0 = \text{Span}_{\mathbb{R}}\{ e_v : v \in C_0 \} \oplus \text{Span}_{\mathbb{R}}\{e_v : v \notin C_0\}.
		\end{align}
		Fix $v \in C_0$ and consider the vector $e_v - e_w \in \mathcal{H}$. We claim that this is not contained in $\text{Span}_\mathbb{R}\{e_\alpha \, : \, \alpha \in S\}$.
		To see this we assume that
		\begin{align*}
			e_v - e_w = \sum_{\alpha \in S} k_\alpha e_\alpha = \sum_{\beta \in C_1} k_\beta e_\beta + \sum_{\gamma \in S \setminus C_1} k_\gamma e_\gamma,
		\end{align*}
		which can be rewritten as
		\begin{align*}
			\big(\sum_{\beta \in C_1} k_\beta e_\beta - e_v \big) + \big(\sum_{\gamma \in S \setminus C_1} k_\gamma e_\gamma - e_w \big) = 0.
		\end{align*}
		By \eqref{DirectDecomposition}, we know that both summands are zero. But this is impossible since, for example, we have $\sum_{\beta \in C_1} k_\beta e_\beta \in \mathcal{H}$ and $e_v \notin \mathcal{H}$.\\
		$(\Leftarrow)$ Since $T$ is a spanning tree for $Q$, we have $| S | = | Q_0 | - 1$, so we just have to prove that $S$ spans $\mathcal{H}$. Fix another vertex $v \in Q_0$ and consider $e_v - e_w \in \mathcal{H}$. The spanning tree $T$ certainly contains a non-oriented path $\alpha_1, \ldots, \alpha_n$ from $w$ to $v$. So there are $\beta_1, \ldots, \beta_n \in \lbrace -1, 1 \rbrace$ such that ${\beta_1} \alpha_1 , \ldots, {\beta_n} \alpha_n$ is an oriented path from $w$ to $v$. Then $e_v - e_w = \sum_{k=1}^n {\beta_k} e_{\alpha_k} \in \text{Span}_\mathbb{R}\{e_\alpha \, : \, \alpha \in S\}$. The claim follows since obviously $\mathcal{H} = \text{Span}_\mathbb{R}\{e_v - e_w \, : \, v \in  Q_0 \}$.
	\end{proof}
	
	The following definition is well-posed by Proposition \ref{BasisForH}.
	\begin{definition}
		We say that $T$ is \textit{$\theta$-regular} if the components of $\theta$ with respect to the basis of $\mathcal{H}$ induced by $T_1$ are all negative. 
	\end{definition}
	
	The main fact we need for our purposes is the following.
	
	\begin{prop}\label{StableAdmissibleProp}
		Let $T$ be a tree and consider a stability vector $\theta \in \mathcal{H}$. Let $\lbrace e_\alpha\rbrace_{\alpha \in T_1}$ be the basis of $\mathcal{H}$ given by Proposition \ref{BasisForH} and consider the corresponding components of $\theta$:
		\begin{align*}
			\theta = \sum_{\alpha \in T_1} c_\alpha e_\alpha.
		\end{align*}
		Then $T$ is $\theta$-stable if and only if $c_\alpha < 0$ for every $\alpha \in T_1$, i.e. if and only if  $\T$ is regular. In particular, we have
		\begin{align*}
			N^\theta( Q) = \left\lbrace \theta\text{-regular spanning trees of }  Q \right\rbrace.
		\end{align*}
	\end{prop}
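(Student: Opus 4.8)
The plan is to unwind the definition of $\theta$-stability for the abelian representation $\mathbb{I}$ of a tree $T$ and match it against the sign of the coordinates $c_\alpha$. By Lemma \ref{IsoOfRep}, $T$ is $\theta$-stable if and only if the representation $\mathbb{I}$ (identity on every arrow) is $\theta$-stable, so I only need to analyze the subrepresentations of $\mathbb{I}$. A subrepresentation of $\mathbb{I}$ is determined by a subset $W \subseteq T_0$ of vertices which is \emph{closed under arrows}, in the sense that whenever $\alpha \in T_1$ has $t(\alpha) \in W$ one must have $h(\alpha) \in W$ (since the arrow acts by the identity); conversely every such $W$ gives a subrepresentation with dimension vector $\mathbb{I}_W = \sum_{v \in W} e_v$. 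The stability inequality to be checked is $\theta(\mathbb{I}_W) < 0$ for all proper nonempty such $W$ (using the normalisation $\theta(\mathbb{I}) = 0$ and the convention that $-\tilde\theta$, or rather here $\theta$ itself, governs stability in the chamber).

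The key combinatorial step is to relate the closed subsets $W$ to the basis $\{e_\alpha\}_{\alpha \in T_1}$. Removing an arrow $\alpha$ from the tree $T$ disconnects it into exactly two components; let $W_\alpha$ be the component containing $h(\alpha)$. Then $W_\alpha$ is closed under arrows of $T$ (any arrow leaving $W_\alpha$ would have to be $\alpha$ itself, which points \emph{into} $W_\alpha$, or would reconnect the two pieces, contradicting that $T$ is a tree — this needs the standard fact that a tree has no unoriented cycles, noted in the excerpt). Moreover these $W_\alpha$, $\alpha \in T_1$, are exactly the minimal nonempty closed subsets, and a general proper nonempty closed subset $W$ is an intersection, or rather is built from, such pieces; the crucial point I will extract is that $\mathbb{I}_W$ always lies in the cone spanned by the $e_\alpha$ with \emph{nonnegative} coefficients when expressed via the path description used in the proof of Proposition \ref{BasisForH}. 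Concretely: writing $e_v - e_w = \sum_k \beta_k e_{\alpha_k}$ along the unique unoriented path from the fixed base vertex $w$ to $v$, one checks that for a closed set $W$ (chosen to contain or exclude $w$ appropriately) the vector $\mathbb{I}_W$ expands with all coefficients $\geq 0$ in the $e_\alpha$-basis, and the coefficient of $e_\alpha$ is $\geq 1$ precisely for those $\alpha$ with $W_\alpha \subseteq W$ (or the complementary version). In particular each $e_{\alpha}$ itself arises, up to the global shift, as $\mathbb{I}_{W_\alpha}$.

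Granting this, the equivalence is immediate in both directions. If $T$ is $\theta$-regular, i.e. all $c_\alpha < 0$, then for every proper nonempty closed $W$ we get $\theta(\mathbb{I}_W) = \sum_\alpha (\text{nonneg. coeff.}) c_\alpha < 0$ — strict because at least one coefficient is positive — so every proper subrepresentation is destabilising in the correct direction and $\mathbb{I}$ is $\theta$-stable. Conversely, if some $c_{\alpha_0} \geq 0$, then taking $W = W_{\alpha_0}$ (a proper nonempty closed subset, since $\alpha_0 \in T_1$ and $T$ has at least two vertices) gives $\theta(\mathbb{I}_{W_{\alpha_0}}) = c_{\alpha_0} \cdot(\text{positive}) + (\text{other terms})$; I will choose the enumeration so that $W_{\alpha_0}$ picks out exactly the single basis vector $e_{\alpha_0}$, giving $\theta(\mathbb{I}_{W_{\alpha_0}}) = c_{\alpha_0} \geq 0$, which violates stability (and even semistability when $c_{\alpha_0} > 0$). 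Hence $\mathbb{I}$ is not $\theta$-stable. Combining with Proposition \ref{BasisForH}, the stable spanning trees of $Q$ are exactly the $\theta$-regular ones.

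\textbf{Main obstacle.} The only genuinely nontrivial point is the bookkeeping in the second paragraph: showing that the minimal closed subsets $W_\alpha$ correspond one-to-one with the arrows, that each $\mathbb{I}_{W_\alpha}$ is (up to the normalisation $\theta(\mathbb{I})=0$) a single basis vector $e_\alpha$, and that every $\mathbb{I}_W$ expands nonnegatively. This is really a statement about the incidence structure of a tree and is elementary, but it requires choosing orientations and the base vertex $w$ carefully and doing an induction on $|T_0|$ (peel off a leaf — which exists since, as noted in the excerpt, a tree has a source or sink with a single arrow). I expect no surprises, but it is where all the care is needed; everything else is a one-line sign argument.
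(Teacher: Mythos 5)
Your high-level strategy is the same as the paper's: identify subrepresentations of $\mathbb{I}$ with closed subsets $W\subseteq T_0$ (the paper calls the corresponding subquivers \emph{good}), then evaluate $\theta(\mathbb{I}_W)$ in terms of arrows crossing the boundary of $W$. The two facts the argument actually needs are both true and easy: (i) for a proper nonempty closed $W$, $\theta(\mathbb{I}_W)=\sum_{\alpha}n_\alpha c_\alpha$ with $n_\alpha\in\{0,1\}$ and at least one $n_\alpha=1$ (because $T$ is connected and $W$ is downward closed, some arrow must point from outside $W$ into $W$); (ii) for $W=W_\alpha$, the only crossing arrow is $\alpha$ itself, so $\theta(\mathbb{I}_{W_\alpha})=c_\alpha$. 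The paper gets both at once from the clean identity $\sum_{v\in W}\theta_v=\sum_{\alpha:\,h(\alpha)\in W,\,t(\alpha)\notin W}c_\alpha$.

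However, the specific combinatorics you offer in the ``key combinatorial step'' paragraph --- the part you yourself flag as where all the care lives --- is wrong on two counts, and no reading of ``(or the complementary version)'' repairs it. First, the $W_\alpha$ are \emph{not} the minimal nonempty closed subsets: for the tree with arrows $1\to 2$ and $3\to 2$, the sets $W_{1\to 2}=\{2,3\}$ and $W_{3\to 2}=\{1,2\}$ both strictly contain the closed set $\{2\}$. Second, the coefficient $n_\alpha$ is governed by ``$\alpha$ crosses into $W$'' (i.e.\ $h(\alpha)\in W$, $t(\alpha)\notin W$), not by ``$W_\alpha\subseteq W$.'' In that same example with $W=\{2\}$ both arrows cross in, so $\theta(\mathbb{I}_W)=c_{1\to 2}+c_{3\to 2}$, yet neither $W_\alpha$ is contained in $W$; conversely for $W=T_0$ every $W_\alpha\subseteq W$ but every $n_\alpha=0$. (Testing the reverse containment $W\subseteq W_\alpha$ fails too: in the tree $1\leftarrow 2\to 3$ with $\alpha=2\to1$ and $W=\{1,3\}$, $\alpha$ crosses into $W$ but $W\not\subseteq W_\alpha=\{1\}$.) Replace your condition by the crossing condition; once you do, the induction-on-leaves scaffolding you were planning is unnecessary, and the rest of your argument collapses onto the paper's proof.
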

	\begin{proof} For the sake of the proof, we say that a subquiver $\tilde{ Q} \subseteq  Q$ of a given quiver is \textit{good} if, for all $\alpha \in  Q_1$, we have that $t(\alpha) \in \tilde{ Q}$ implies $h(\alpha) \in \tilde{ Q}_0$ and $\alpha \in \tilde{ Q}_1$. Let $ Q$ be a quiver and consider the map
		\begin{align*}
			\left\lbrace \substack{\text{subrepresentations of the}\\ \text{abelian representation } \mathbb{I}} \right\rbrace \longrightarrow \left\lbrace \text{good subquivers of }  Q \right \rbrace
		\end{align*}
		sending a subrepresentation $R$ to the full subquiver whose nodes are those corresponding to 1-dimensional linear spaces in $R$. One checks that this is a well defined bijection.
		
		Now we claim that if $\tilde{T}$ is a good subquiver of $T$, then 
		\begin{align*}
			\sum_{v \in \tilde{T}_0} \theta_v  = \sum_{\substack{\alpha \in  Q_1\\ h(\alpha) \in \tilde{T}_0,\, t(\alpha) \notin \tilde{T}_0}} c_\alpha.
		\end{align*}
		In order to see this note that
		\begin{align*}
			\theta_v = \sum_{\substack{\alpha \in T_1\\ h(\alpha) = v}}c_\alpha - \sum_{\substack{\beta \in T_1\\ t(\beta) = v}}c_\beta,
		\end{align*}
		therefore
		\begin{align*}
			\sum_{v \in \tilde{T}_0} \theta_v = \sum_{\substack{\alpha \in T_1\\ h(\alpha) \in \tilde{T}_0}}c_\alpha - \sum_{\substack{\beta \in T_1\\ t(\beta) \in \tilde{T}_0}}c_\beta.
		\end{align*}
		Since $\tilde{T}$ is a good subquiver we see that
		\begin{align*}
			t(\beta) \in \tilde{T}_0 \iff h(\beta), t(\beta) \in \tilde{T}_0,
		\end{align*}
		hence the equation above becomes
		\begin{align*}
			\sum_{v \in \tilde{T}_0} \theta_v = \sum_{\substack{\alpha \in T_1\\ h(\alpha) \in \tilde{T}_0}}c_\alpha - \sum_{\substack{\beta \in T_1\\ t(\beta) \in \tilde{T}_0,\, h(\beta) \in \tilde{T}_0}}c_\beta = \sum_{\substack{\alpha \in  Q_1\\ h(\alpha) \in \tilde{T}_0,\, t(\alpha) \notin \tilde{T}_0}} c_\alpha.
		\end{align*}
		($\Leftarrow$) is now clear: if $S$ is a subrepresentation we can consider the associated good subquiver $\tilde{T}$ and notice that
		\begin{align*}
			\theta(S) = \sum_{v \in \tilde{T}_0} \theta_v = \sum_{\substack{\alpha \in  Q_1\\ h(\alpha) \in \tilde{T}_0,\, t(\alpha) \notin \tilde{T}_0}} c_\alpha < 0.
		\end{align*}
		($\Rightarrow$) Let $\alpha \in T_1$ and consider the full subquiver $\tilde{T}$ given by the connected component containing $h(\alpha)$ after removing $\alpha$. If we apply the formula above to this good subquiver we obtain
		\begin{align*}
			c_\alpha = \sum_{v \in \tilde{T}_0} \theta_v < 0.
		\end{align*}
	\end{proof}
		
		\section{JK residues and abelian representations}\label{AbelianJKSec}
	
	In this Section we show how the JK residue formula simplifies considerably for the abelian dimension vector $\mathbb{I}$, at least for generic $R$-charges.
	
	Fix $v_0 \in  Q_0$. Then we have $\mathfrak{a} \cong \mathbb{R}\bra Q_0 \setminus \lbrace v_0 \rbrace\ket$. In this case, by definition, there are no roots: $\mathfrak{R} = \emptyset$. The following result characterises the set of weights.
	
	\begin{prop}\label{WeightsArrows}
		Consider the map
		\begin{align*}
			ar\!: \mathcal{W} \longrightarrow \bar{ Q}_1
		\end{align*}
		described in Remark \ref{MapWeightsArrows}. In the case of the abelian dimension vector $\mathbb{I}$, $ar$ is a bijection. Moreover, $ar$ induces a bijection between the set of bases $\mathfrak{B}(\mathcal{W})$ and the set of spanning trees of $\bar{ Q}$. Finally, if $B \in \mathfrak{B}(\mathcal{W})$ and $T$ is the corresponding spanning tree of $ Q$, write
		\begin{align*}
			\tilde{\theta} = \sum_{\rho \in B} c_\rho \rho; \, \theta = \sum_{\alpha \in T_1} c_\alpha e_\alpha.
		\end{align*}
		Then $c_\rho = c_{\text{ar}(\rho)}$ for every $\rho \in B$.
	\end{prop}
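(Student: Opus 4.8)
The plan is to fit $V$ and the weight set $\mathcal{W}$ into a single combinatorial model attached to the quiver, and then reduce everything to Proposition \ref{BasisForH}. \emph{Step 1: a model for $V$, and bijectivity of $ar$.} I would use the identification $\mathfrak{a}\cong\R\bra Q_0\setminus\{v_0\}\ket$, giving $V=\mathfrak{a}^\ast$ the dual basis $\{f_v:v\in Q_0\setminus\{v_0\}\}$, and I would set $f_{v_0}:=0$. A direct computation of the restriction to $\mathfrak{a}$ of the weight functional $u\mapsto u_{h(\alpha)}-u_{t(\alpha)}$ then shows, in all three cases ($t(\alpha)=v_0$, $h(\alpha)=v_0$, or neither), that
\[ \rho_\alpha = f_{h(\alpha)}-f_{t(\alpha)}\in V. \]
Since $Q$ has no loops, each $\rho_\alpha$ is a nonzero vector; and since the $f_v$ with $v\neq v_0$ are linearly independent (and $f_{v_0}=0$), an elementary comparison of such expressions shows that $\rho_\alpha=\rho_\beta$ forces $(t(\alpha),h(\alpha))=(t(\beta),h(\beta))$. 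Hence $\alpha\mapsto\rho_\alpha$ factors through $\bar{Q}_1$ and defines a bijection $\bar{Q}_1\to\mathcal{W}$ inverse to $ar$, which is the first assertion.

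\emph{Step 2: bases of $V$ versus spanning trees.} Next I would introduce the linear map $\psi\colon\R Q_0\to V$ with $\psi(e_v)=f_v$ for $v\neq v_0$ and $\psi(e_{v_0})=0$. By Step 1, $\psi(e_\alpha)=\rho_\alpha$ for every arrow $\alpha$, where $e_\alpha=e_{h(\alpha)}-e_{t(\alpha)}$ as in Section \ref{SpanningTreeSec}. Moreover $\ker\psi=\R e_{v_0}$, which meets $\mathcal{H}=V(\sum_{v\in Q_0}\theta_v)$ trivially, since $e_{v_0}\notin\mathcal{H}$; as $\dim\mathcal{H}=|Q_0|-1=\dim V$, the restriction $\psi|_{\mathcal{H}}\colon\mathcal{H}\to V$ is an isomorphism. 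Given $B\subseteq\mathcal{W}$, its preimage under this isomorphism is exactly $\{e_\alpha:\alpha\in ar(B)\}$, so $B$ is a basis of $V$ if and only if $\{e_\alpha:\alpha\in ar(B)\}$ is a basis of $\mathcal{H}$. By Proposition \ref{BasisForH} applied to $\bar{Q}$ (which, like $Q$, has no loops or oriented cycles), the latter holds if and only if $ar(B)$ is the arrow set of a spanning tree of $\bar{Q}$. Composing with Step 1 gives the claimed bijection between $\mathfrak{B}(\mathcal{W})$ and the spanning trees of $\bar{Q}$.

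\emph{Step 3: matching the coordinates.} Finally, comparing the definition \eqref{DiagonalEmbedding} of the diagonal embedding with that of $\psi$ shows $\tilde{\theta}=\psi(\theta)$. Hence, writing $\theta=\sum_{\alpha\in T_1}c_\alpha e_\alpha$ in $\mathcal{H}$ and applying the isomorphism $\psi|_{\mathcal{H}}$, I obtain $\tilde{\theta}=\sum_{\alpha\in T_1}c_\alpha\rho_\alpha=\sum_{\rho\in B}c_{ar(\rho)}\rho$; since $B$ is a basis of $V$ by Step 2, the expansion of $\tilde{\theta}$ in $B$ is unique, so $c_\rho=c_{ar(\rho)}$ for all $\rho\in B$, as claimed.

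I do not expect a serious obstacle here, the content being linear algebra; the one point needing genuine care is in Step 1, verifying that the projection $\mathfrak{h}^\ast\to V$ does not accidentally identify two distinct weights, so that $ar$ is injective and not merely surjective — this is where the absence of loops and the explicit formula for $\rho_\alpha$ are used. A secondary point is simply to note that Proposition \ref{BasisForH}, although stated for $Q$, applies verbatim to $\bar{Q}$.
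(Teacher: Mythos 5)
Your proof is correct and follows essentially the same route as the paper's: identify the diagonal embedding (your $\psi$, the paper's $\sim$) as an isomorphism $\mathcal{H}\to V$ carrying $e_\alpha$ to $\rho_\alpha$ and $\theta$ to $\tilde\theta$, and then invoke Proposition \ref{BasisForH}. The only notable difference is in Step 1: the paper simply cites Remark \ref{MapWeightsArrows} for the bijectivity of $ar$ (surjection with fibres of size $d_{h(\alpha)}d_{t(\alpha)}=1$), whereas you spell out the explicit formula $\rho_\alpha = f_{h(\alpha)}-f_{t(\alpha)}$ (with $f_{v_0}=0$) and verify directly that the projection $\mathfrak{h}^\ast\to V$ does not identify weights coming from distinct reduced arrows; this is a genuine small clarification of a point the paper leaves implicit.
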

	\begin{proof}
		By Remark \ref{MapWeightsArrows} $ar$ is surjective and the cardinality of the fibre at an arrow $\alpha \in \bar{ Q}_1$ is $d_{t(\alpha)} d_{h(\alpha)} = 1$ since $d = \mathbb{I}$. Thus $ar$ is a bijection. Consider the diagram
		\[
		\begin{tikzcd}
			\bar{ Q}_1 \arrow{r}\arrow[d, swap, "\text{ar}^{-1}"] & \mathcal{H} \arrow[d, "\sim"]\\
			\mathcal{W} \arrow[r, hook] & V
		\end{tikzcd}
		\]
		The vertical arrow on the right is the diagonal embedding $\sim$ defined in (\ref{DiagonalEmbedding}). In the abelian case this is clearly an isomorphism of linear spaces which maps $\theta$ to $\tilde{\theta}$, by definition. The upper horizontal map sends the arrow $\alpha$ to the the vector $e_\alpha := e_{h(\alpha)} - e_{t(\alpha)} \in \mathbb{R}Q_0$, while the lower map is simply the inclusion. This diagram is clearly commutative, and this shows that $ar$ maps a basis of $V$ extracted from $\mathcal{W}$ to a basis of $\mathcal{H}$ induced from $\overline{Q}_1$. The latter set of bases is precisely the set of spanning trees of $\overline{Q}$ by Proposition \ref{BasisForH}. The fact that the components of $\theta$ and $\tilde{\theta}$ with respect to corresponding bases are the same follows from the commutativity of the diagram.
	\end{proof}
	
	We will often identify $\mathcal{W}$ and $\bar{ Q}_1$ via $ar$ implicitly. 
	
	Since, in the abelian case, we have $\mathfrak{R} = \emptyset$, the residue $\jk(\opZ_{Q}(\mathbb{I}), \zeta)$ is well defined for stability vectors $\theta$ whose extensions $\tilde{\theta}$ are $\mathcal{W}$-regular. In general, there is a simpler condition which implies regularity.
	
	\begin{prop}
		Let $ Q$ be a quiver with a stability vector $\theta$ and dimension vector $d$. If $d$ is $\theta$-coprime, then $\tilde{\theta}$ is $\mathfrak{A}$-regular.
	\end{prop}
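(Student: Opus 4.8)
The plan is to prove the contrapositive: if $\tilde\theta$ lies in some hyperplane $\operatorname{Span}_{\mathbb R}(S)$ for a subset $S \subseteq \mathfrak A$ with $|S| = \dim(V)-1 = |d|-1$, then $d$ fails to be $\theta$-coprime, i.e.\ there exists $0 < e < d$ with $\theta(e) = 0$. So suppose $\tilde\theta = \sum_{\ell \in S} a_\ell\, \ell$ for some coefficients $a_\ell \in \mathbb R$, where each $\ell \in S$ is either a root $r_{v,i,j}$ or a weight $\rho_{\alpha,i,j}$, viewed as a functional on $\mathfrak a = V(x_{\overline v,\overline k}) \subset \bigoplus_{v} \mathbb R^{d_v}$. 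Pairing this relation with the coordinate vectors $e_{v,k}$ gives, for each $(v,k) \neq (\overline v,\overline k)$, an expression for $\theta_v$ as a signed sum of the $a_\ell$ over those $\ell \in S$ whose defining functional involves the coordinate $u_{v,k}$.

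The key step is a rank/combinatorial argument. Since $S$ spans a proper hyperplane of $V$, I want to extract from this a nontrivial vector in $\mathfrak a$ on which all of $S$ vanishes — equivalently, a nonzero $x \in \mathfrak a$ with $\ell(x) = 0$ for all $\ell \in S$ — and then convert the support of $x$ into the desired subrepresentation. Concretely, each functional $r_{v,i,j}$ or $\rho_{\alpha,i,j}$ is a difference $u_{w,k} - u_{w',k'}$ of two coordinates, so the system $\{\ell(x) = 0 : \ell \in S\}$ together with the defining equation $x_{\overline v,\overline k}=0$ says that $x$ is constant on the connected components of the graph $\Gamma_S$ whose vertices are the pairs $(v,k)$ (with a distinguished vertex $(\overline v,\overline k)$ set to $0$) and whose edges are the elements of $S$. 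Since $S$ does not span $V$, the affine-linear span of $S$ has dimension $\le |d|-2$, which forces $\Gamma_S$ to have at least two connected components other than (or including) the one containing $(\overline v,\overline k)$; pick a component $C$ not containing $(\overline v,\overline k)$. Setting $x$ equal to $1$ on $C$ and $0$ elsewhere gives a nonzero element of $\mathfrak a$ annihilated by all of $S$, hence by $\tilde\theta$: thus $\sum_{(v,k)\in C}\theta_v = 0$.

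Finally, I claim the multiset $e$ defined by $e_v = \#\{k : (v,k)\in C\}$ is a dimension vector with $0 < e < d$ and $\theta(e) = \sum_{(v,k)\in C}\theta_v = 0$, contradicting $\theta$-coprimality. Here $e \le d$ is clear since $C$ is a set of coordinate pairs; $e \neq 0$ because $C$ is nonempty; and $e \neq d$ because $(\overline v,\overline k)\notin C$, so at least one coordinate is missing. (Strictly, to match the definition of $\theta$-coprimality one should note that $e$ as just described is the dimension vector of a genuine subrepresentation, using the good-subquiver correspondence from the proof of Proposition~\ref{StableAdmissibleProp}; in the abelian case $d = \mathbb I$ this is automatic, and in general one observes that the edges of $\Gamma_S$ corresponding to weights $\rho_{\alpha,i,j}$ force the relevant coordinate blocks to be grouped compatibly with the arrows of $Q$, so that $C$ indeed supports a subrepresentation — this is the only point requiring care.)

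The main obstacle I anticipate is precisely this last point: ensuring that the combinatorial component $C$ produced by the linear algebra actually corresponds to an honest subrepresentation (equivalently, that the associated $e$ is achievable by a good subquiver and not merely a numerical vector), since the definition of $\theta$-coprimality quantifies over subrepresentations $0 < e < d$. One resolves this by choosing $C$ more carefully — among all nonzero elements of $\mathfrak a^* $ annihilating $S$, one can arrange the support to be ``upward closed'' along the arrows of $Q$ by replacing $x$ with a suitable $0/1$ truncation, using that roots within a single vertex block act by permuting coordinates while weights relate distinct blocks — but making this fully rigorous is the part of the argument that needs genuine work rather than bookkeeping.
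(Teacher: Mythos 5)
Your core argument is correct and is, at bottom, the same combinatorial idea as the paper's: view the functionals in $\mathfrak{A}$ as edges of a graph on the coordinate pairs $(v,k)$, and turn a linear relation involving $\tilde\theta$ into a vanishing partial sum $\sum_{(v,k)\in C}\theta_v = \theta(e)$ for a proper nonzero $e$. The paper runs the argument directly rather than contrapositively: it fixes a basis $B\subset\mathfrak{A}$, builds the associated tree quiver $T$ (using Proposition \ref{WeightsArrows} to recognize that $B$ is a spanning tree), deletes one edge $\alpha_f$ and, via the computation in the proof of Proposition \ref{StableAdmissibleProp}, identifies the component $c_f$ of $\tilde\theta$ with $\sum_{w\in\tilde T_0}\theta'_w$, a partial sum of the type above. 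Your version instead takes a non-spanning $(n-1)$-subset $S$, notes the graph $\Gamma_S$ must be disconnected, and extracts the indicator vector of a component missing $(\bar v,\bar k)$; this is somewhat more streamlined since it sidesteps the tree machinery. (Minor slip: $\dim V = |d|-1$, so the sets you consider have $|S|=\dim V-1 = |d|-2$, not $|d|-1$; this does not affect anything, since either way $|S|<\dim V$ so $S$ cannot span.)

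The one genuine misstep is the worry you raise at the end and flag as ``the only point requiring care.'' It is not a point at all. The definition of $\theta$-coprimality in the paper (which is the standard one in the quiver literature) is purely numerical: $d$ is $\theta$-coprime iff $\theta(e)\neq 0$ for every dimension vector $0<e<d$. There is no requirement that $e$ arise as the dimension vector of a subrepresentation of anything. The paper's own proof exploits exactly this: it produces a vector $e$ with $0\le n_v\le d_v$, $0<e<d$, and $\theta(e)=0$, and immediately concludes by coprimality. So your proof was already complete before the final caveat; the good-subquiver discussion and the proposed $0/1$-truncation repair are unnecessary.
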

	\begin{proof} Fix a basis $B$ of $V$ extracted from $\mathfrak{A}$. We need to show that the components of $\tilde{\theta}$ with respect to $B$ are all nonzero. Construct a new quiver $T$ having $d_v$ vertices $v_1, \ldots, v_{d_v} \in T_0$ for every vertex $v \in  Q_0$. Given $v, w \in  Q_0$ and two indices $i \in \lbrace 1, \ldots, d_v \rbrace$ $j \in \lbrace 1, \ldots, d_w \rbrace$, there is a single arrow from $v_i$ to $w_j$ in $T$ if and only if the functional $e_{w,j} - e_{v,i}$ belongs to $\mathcal{B}$. We equip this new quiver with the abelian dimension vector $\mathbb{I}$. We see that the corresponding linear space is  canonically isomorphic to $\mathfrak{a}_\mathbb{C}$ and the corresponding set of functionals is precisely $B$. Since $T$ has no multiple arrows and it induces a basis of $V$, it is a tree by Proposition \ref{WeightsArrows}. Note that the vector $\theta^\prime \in \mathbb{Z}T_0$ defined by $\theta^\prime_{v_i} := \theta_v$ for every $v \in  Q_0$, $i \in \lbrace 1, \ldots, d_v \rbrace$ is a normalised stability for ($T, \mathbb{I}$):
		\begin{align*}
			\sum_{w \in T_0} \theta^\prime_{w} = \sum_{v \in  Q_0} d_v \theta_v = 0.
		\end{align*}
		Moreover we have $\tilde{\theta^\prime} = \tilde{\theta}$. By construction, for every functional $f \in B$ there is an arrow $\alpha_f$ in $T$ and $e_{f} = e_{\alpha_f}$.	
		Let $\tilde{T}$ be the subquiver of $T$ obtained by taking the connected component of $T \setminus \lbrace \alpha_f \rbrace$ containing $h(\alpha_f)$. By the proof of Proposition 8.6, the component of $\tilde{\theta}$ with respect to the element $f \in B$ is given by
		\begin{align*}
			c_f = \sum_{w \in \tilde{T}_0} \theta^\prime_w.
		\end{align*} 
		On the other hand, we can partition the vertices of $\tilde{T}$ into subsets coming from the same vertex of $ Q$. Then the sum above can be rewritten as $\sum_{v \in  Q_0} n_v \theta_v$ with $n_v \leq d_v$ (possibly zero) for every vertex $v$ of $ Q$. Then by definition of coprimality we have that the sum is nonzero.
	\end{proof} 
	The case of trees is especially important for our purposes.
	\begin{prop}\label{SSTProp} Let $T$ be a tree with stability vector $\zeta$, which is regular with respect to $\mathbb{I}$. Then, for representations of $T$ with dimension vector $\mathbb{I}$, $\zeta$-semistability implies $\zeta$-stability.
	\end{prop}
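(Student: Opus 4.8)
The plan is to translate the regularity hypothesis and the semistable-versus-stable dichotomy into combinatorics of the tree $T$, and then to argue using a Jordan--Hölder filtration.

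First I would reformulate regularity. For the dimension vector $\mathbb{I}$ there are no roots, so $\mathfrak{R}=\emptyset$ and $\mathfrak{A}=\mathcal{W}$; by Proposition~\ref{WeightsArrows} the diagonal embedding identifies $\mathcal{W}\subset V$ with $\{e_\alpha:\alpha\in T_1\}\subset\mathcal{H}$, which by Proposition~\ref{BasisForH} is a basis of $\mathcal{H}$. Writing $\zeta=\sum_{\alpha\in T_1}c_\alpha e_\alpha$ in this basis, the subspaces $\operatorname{Span}_{\mathbb{R}}(I)$ with $I\subseteq\mathcal{W}$, $|I|=\dim V-1$, are exactly the coordinate hyperplanes $\{c_\beta=0\}$, $\beta\in T_1$. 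Hence $\zeta$ is regular with respect to $\mathbb{I}$ if and only if $c_\alpha\neq0$ for all $\alpha\in T_1$, and this is the only consequence of the hypothesis I will use. I would also record, exactly as in the proof of Proposition~\ref{StableAdmissibleProp}, the boundary formula: for $\emptyset\neq S\subsetneq T_0$ and $\mathbb{I}_S:=\sum_{v\in S}v$,
\[
\zeta(\mathbb{I}_S)=\sum_{v\in S}\zeta_v=\sum_{\substack{\alpha\in T_1\\ h(\alpha)\in S,\ t(\alpha)\notin S}}c_\alpha\;-\;\sum_{\substack{\beta\in T_1\\ t(\beta)\in S,\ h(\beta)\notin S}}c_\beta,
\]
a signed sum of the $c_\alpha$ over the edges of $T$ joining $S$ to its complement.

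Next, suppose $R$ with $\dim R=\mathbb{I}$ is $\zeta$-semistable but not $\zeta$-stable. Since the full subcategory of $\zeta$-semistable representations $G$ with $\zeta(\dim G)=0$ is abelian and of finite length, $R$ admits a Jordan--Hölder filtration with $m\geq2$ factors $G_1,\dots,G_m$, each $\zeta$-stable with $\zeta(\dim G_k)=0$. As $\sum_k\dim G_k=\mathbb{I}$ is multiplicity free, each $\dim G_k=\mathbb{I}_{S_k}$ for a nonempty $S_k$, and $T_0=S_1\sqcup\cdots\sqcup S_m$. The key claim is that each induced subgraph $T[S_k]$ is connected: otherwise $S_k=A\sqcup B$ with no edge of $T$ between $A$ and $B$, and then the subrepresentations of $G_k$ supported on $A$ and on $B$ are proper and nonzero, so $\zeta(\mathbb{I}_A)<0$ and $\zeta(\mathbb{I}_B)<0$ by stability of $G_k$, contradicting $\zeta(\mathbb{I}_A)+\zeta(\mathbb{I}_B)=\zeta(\mathbb{I}_{S_k})=0$.

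Finally I would contract each connected $S_k$ to a point. Then $T[S_k]$ is a subtree with $|S_k|-1$ edges, so $T$ has $|T_0|-m$ internal edges and exactly $m-1$ edges joining distinct parts; the contracted graph $\widehat{T}$ has $m$ vertices, $m-1$ edges, and is connected, hence is a tree. Since $m\geq2$, choose a leaf $S_\ell$ of $\widehat{T}$: it is joined to the rest of $T$ by a single edge $\alpha_0\in T_1$, all other edges of $T$ meeting $S_\ell$ lying inside $S_\ell$. The boundary formula then gives $\zeta(\mathbb{I}_{S_\ell})=\pm c_{\alpha_0}$, while $\zeta(\dim G_\ell)=\zeta(\mathbb{I}_{S_\ell})=0$; hence $c_{\alpha_0}=0$, contradicting the reformulated regularity. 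Therefore $R$ is $\zeta$-stable. The step I expect to require the most care is the claim that the $\zeta$-stable Jordan--Hölder factors have connected support inside $T$, together with the edge count exhibiting $\widehat{T}$ as a tree; the remaining steps are direct translations via Propositions~\ref{BasisForH}, \ref{WeightsArrows} and \ref{StableAdmissibleProp}.
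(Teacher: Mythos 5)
Your proof is correct and takes a genuinely different route from the paper's. The paper argues geometrically: from any one semistable $R$ it deduces $c_\alpha\le 0$ for every edge (via the subrepresentations $C_\alpha$ appearing in the proof of Proposition~\ref{StableAdmissibleProp}), hence $c_\alpha<0$ by regularity, so $\mathbb{I}$ is stable and $\mathcal{M}^{\zeta-st}_{\mathbb{I}}(T)=\{[\mathbb{I}]\}$ by Lemma~\ref{IsoOfRep}; it then invokes irreducibility of the semistable moduli space and density of the stable locus to conclude that $\mathcal{M}^{\zeta-sst}_{\mathbb{I}}(T)$ is the same single point. Your argument is pointwise and purely combinatorial: given a semistable-but-not-stable $R$, a Jordan--H\"older filtration in the slope-zero subcategory partitions $T_0$ into supports $S_1,\dots,S_m$ ($m\ge2$) of stable factors; stability of each factor forces each $S_k$ to be connected (otherwise the factor would split as a direct sum); the edge count shows the contraction of $T$ along the $S_k$ is again a tree, on $m\ge 2$ vertices; and reading the boundary formula at a leaf yields $c_{\alpha_0}=0$ for some edge $\alpha_0$, contradicting regularity. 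Both proofs rest on the same pairing of $\zeta$ against boundary edges, which is the intermediate identity in the proof of Proposition~\ref{StableAdmissibleProp}, but your route avoids GIT and the irreducibility of the semistable moduli space, and proves the pointwise implication directly, whereas the paper proves equality of the two moduli spaces (from which the pointwise statement follows because $\mathbb{I}$ is simple, so the unique semistable S-equivalence class is the orbit of $\mathbb{I}$). The steps you flag as delicate, connectedness of the $S_k$ and the tree structure of the contraction, are handled correctly.
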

	\begin{proof}
		Consider a $\zeta$-semistable representation $R$, and its subrepresentation $C_\alpha$ obtained by setting to zero the dimension of the linear spaces corresponding to the vertices belonging to the connected component of $T\setminus \lbrace \alpha \rbrace$ containing $t(\alpha)$. Then, by Proposition \ref{StableAdmissibleProp}, we have $\zeta(C_\alpha)  = c_\alpha$, and semistability implies that $c_\alpha \leq 0$. Regularity ensures that equality cannot hold, hence $T$ is $\zeta$-stable, again by Proposition \ref{StableAdmissibleProp}. Then $\mathcal{M}_\mathbb{I}^{\zeta-st}(T) = \lbrace [\mathbb{I}] \rbrace$ is a singleton. We know that $\mathcal{M}_\mathbb{I}^{\zeta-st}(T) \subseteq \mathcal{M}_\mathbb{I}^{\zeta-sst}(T)$ is an open inclusion and that it is dense, since the smaller subset is nonempty and the $\zeta$-semistable variety is irreducible.  
		Then $\mathcal{M}^{\zeta-sst}_{\mathbb{I}}$ is an integral complex variety with an open point, hence it is a singleton and it coincides with the $\zeta$-stable variety.
	\end{proof}
	
	The following result is straightforward.
	
	\begin{lem}\label{NonDegenerateIntersections}
		There is a dense open cone $\mathfrak{P} \subset \R\mathcal{W}$ such that, for $R \in \mathfrak{P}$, all the isolated intersections of the hyperplanes of the form 
		\begin{align*}
			V(\rho + R_\rho),\, \rho \in \mathcal{W}
		\end{align*}
		are non-degenerate, that is, no $|Q_0|$ hyperplanes contain such intersection.
	\end{lem}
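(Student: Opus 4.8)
The statement asserts that for a generic $R$-charge $R \in \R\mathcal{W}$, the affine hyperplane arrangement $\{V(\rho + R_\rho) : \rho \in \mathcal{W}\}$ has only non-degenerate isolated intersections, i.e. no point lies on $|Q_0|$ (equivalently, more than $|Q_0|-1 = \dim \mathfrak{a}$) of the hyperplanes. The plan is to exhibit the bad set of $R$-charges as a finite union of proper linear (hence closed, nowhere dense) subspaces of $\R\mathcal{W}$, so that its complement contains a dense open cone.

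First I would set up the combinatorics. An isolated intersection of hyperplanes from the family is cut out by a subset $S \subseteq \mathcal{W}$ of functionals which spans $V_\C$ (equivalently $V$), and by Proposition \ref{WeightsArrows} any such $S$ containing a basis corresponds to (a set of arrows containing) a spanning tree of $\bar Q$. Fix a collection $S$ of functionals with $|S| \geq |Q_0|$ that is of rank $\dim V = |Q_0| - 1$ (so that the associated affine hyperplanes can have a common point). The common intersection $\bigcap_{\rho \in S} V(\rho + R_\rho)$ is nonempty precisely when the affine system is consistent, which — since $\operatorname{rk}\{\rho : \rho \in S\} = |Q_0|-1 < |S|$ — imposes $|S| - (|Q_0|-1)$ independent linear conditions on the vector $(R_\rho)_{\rho \in S}$, hence a proper linear subspace $L_S \subsetneq \R^S \hookrightarrow \R\mathcal{W}$ of the space of $R$-charges. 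Concretely, pick a basis $B \subseteq S$ (a spanning tree $T$ of $\bar Q$); then for each $\rho \in S \setminus B$ there is a unique linear relation $\rho = \sum_{\beta \in B} a^{\rho}_\beta \beta$, and consistency of the affine arrangement forces $R_\rho = \sum_\beta a^\rho_\beta R_\beta$. Thus the locus of $R$-charges for which \emph{that particular} overdetermined subfamily $S$ has a common point is exactly the proper linear subspace defined by these finitely many equations.

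Next I would take the union. There are only finitely many subsets $S \subseteq \mathcal{W}$ with $|S| \geq |Q_0|$ and rank $|Q_0|-1$; let $\mathfrak{P}^c := \bigcup_S L_S \subsetneq \R\mathcal{W}$, a finite union of proper linear subspaces. If $R \notin \mathfrak{P}^c$, then no such $S$ has a common point, which means: whenever $|Q_0|$ (or more) of the hyperplanes $V(\rho + R_\rho)$ meet, the corresponding functionals cannot have rank $\leq |Q_0|-1$ — but $|Q_0|$ functionals in the $(|Q_0|-1)$-dimensional space $V$ always have rank $\leq |Q_0|-1$, a contradiction; hence no $|Q_0|$ of the hyperplanes meet at all. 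Therefore $\mathfrak{P} := \R\mathcal{W} \setminus \mathfrak{P}^c$ is the desired set, and since it is the complement of a finite union of proper subspaces it contains a dense open cone (for instance, take a dense open cone inside any chamber of the hyperplane arrangement $\{L_S\}$ in $\R\mathcal{W}$; equivalently, $\mathfrak{P}$ itself is open, dense, and invariant under positive scaling). This completes the argument.

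The only genuinely delicate point is bookkeeping: one must be careful that $\mathfrak{P}^c$ is a union of \emph{proper} subspaces, i.e. that for each relevant $S$ the consistency conditions are not vacuously satisfied for all $R$. This is where the hypothesis that $Q$ has no oriented cycles (so that $\bar Q$'s functionals $e_\alpha$ have no relation forcing the affine hyperplanes to always meet) — together with the fact that $S$ strictly contains a basis — guarantees that at least one equation $R_\rho = \sum_\beta a^\rho_\beta R_\beta$ is a nontrivial constraint. One checks this directly from the explicit form $\rho \mapsto e_{h(\alpha)} - e_{t(\alpha)}$: the relation among functionals in $S$ is a genuine linear dependence, so the induced affine consistency condition is a nontrivial affine relation among the $R_\rho$, cutting out a proper subspace. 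Granting this, the rest is the routine observation that a finite union of proper linear subspaces of $\R\mathcal{W}$ has dense open (conical) complement.
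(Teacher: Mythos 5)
The paper states this lemma without proof, simply calling it ``straightforward,'' so there is no argument of the authors' to compare against; your job was essentially to supply the routine genericity argument, and you have done so correctly. The core idea is right: a set $S\subseteq\mathcal{W}$ of size $\ge |Q_0|$ with $\operatorname{rk}S=|Q_0|-1$ imposes, via the dependencies among its functionals, at least one nontrivial linear condition on $(R_\rho)_{\rho\in\mathcal{W}}$ for the affine hyperplanes to have a common point, so the bad locus is a finite union of proper linear subspaces of $\R\mathcal{W}$ and its complement is a dense open cone.

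Two small remarks. First, your appeal to the ``no oriented cycles'' hypothesis is a red herring: the constraint $R_\rho=\sum_\beta a^\rho_\beta R_\beta$ (for $\rho\in S\setminus B$, $B$ a basis in $S$) has coefficient $1$ on $R_\rho$ and $R_\rho$ does not appear on the right, so it is automatically a nontrivial linear equation on $\R\mathcal{W}$ regardless of the quiver's structure; your own ``one checks this directly from the explicit form'' sentence is the correct justification and makes the cycle hypothesis unnecessary. Second, your closing claim ``hence no $|Q_0|$ of the hyperplanes meet at all'' slightly overstates what you have literally excluded, since you only built $L_S$ for $S$ of rank exactly $|Q_0|-1$; a set $S_0$ of size $|Q_0|$ with $\operatorname{rk}S_0<|Q_0|-1$ is not among your forbidden configurations. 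This does not affect the lemma, which only concerns points $x\in\mathfrak{M}_{sing}$: such an $x$ already carries a basis $B\subseteq\mathcal{W}$ of hyperplanes through it, so if an extra $\rho_0$ also passes through $x$ then $B\cup\{\rho_0\}$ is a rank-$(|Q_0|-1)$ bad set of size $|Q_0|$, which is what you need. If you want the stronger uniform statement, simply enlarge $\mathfrak{P}^c$ to include the consistency loci of all $S$ with $|S|\ge|Q_0|$ and $\operatorname{rk}S\le|Q_0|-1$; these are still proper linear subspaces by the same reasoning.
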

	
	Now, for such generic $R$-charges, $R \in \mathfrak{P}$, we will apply the general result Corollary \ref{JKBasisCor} in order to compute the (global) Jeffrey-Kirwan residue.
	
	Let $\phi_x: \mathcal{M}_{\mathfrak{a}_\mathbb{C}, x} \xrightarrow{\sim} \mathcal{M}_{\mathbb{C}^{|Q_0|-1}, 0}$ be the isomorphism induced by the choice of an ordering of the basis $\mathfrak{A}_x$, composed with a translation.  
	\begin{prop}\label{BETHAbelian}
		Suppose $R \in \mathfrak{P}$ and let $f \in \operatorname{Mer}(\mathfrak{a}_\mathbb{C})$ be such that, for all $x \in \mathfrak{M}_{sing}$, there is an open ball $D$ around $x$ with
		\begin{align*}
			f \in \operatorname{Hol}\big(D \setminus \bigcup_{g \in \mathfrak{A}_x} V(g(\ast - x)) \big).
		\end{align*}
		Then, we have
		\begin{align*}
			\operatorname{JK}(f, \zeta) = \sum_{x \in \mathfrak{M}_{sing}} \tilde{\epsilon}(x) \operatorname{IR}_0 ( \phi_x(f(\ast )))
		\end{align*}
		where
		\begin{align*}
			\tilde{\epsilon}(x) := \begin{cases}
				1 & \text{if the components of }\tilde{\theta} \text{ w.r.t. } \mathfrak{A}_x \text{ are all negative},\\
				0 & \text{otherwise}.
			\end{cases}
		\end{align*}
	\end{prop}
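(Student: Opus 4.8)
The plan is to evaluate the global Jeffrey-Kirwan residue $\jk(f,\zeta)=\sum_{x\in\mathfrak{M}_{sing}}\text{JK}^{\mathfrak{A}_x}_{\zeta}(f(\ast+x))$ one singular point at a time, reducing each local term to a single iterated residue by means of the ``basis case'' results, namely the Lemma preceding Corollary \ref{JKBasisCor} together with Corollary \ref{JKBasisCor} itself.

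First I would show that for every $x\in\mathfrak{M}_{sing}$ the set $\mathfrak{A}_x\subset V$ is a basis. Since $d=\mathbb{I}$ we have $\dim_{\C}\mathfrak{a}_\C=|Q_0|-1$ and $\mathfrak{R}=\emptyset$, so by the definition of $\mathfrak{M}_{sing}$ the point $x$ lies on at least $|Q_0|-1$ linearly independent hyperplanes $H_\rho$, $\rho\in\mathcal{W}$; these already cut out $x$ as an isolated point, and the genericity hypothesis $R\in\mathfrak{P}$ forbids, via Lemma \ref{NonDegenerateIntersections}, a further hyperplane of the arrangement through $x$. Hence $\mathfrak{A}_x$ has exactly $|Q_0|-1$ elements, linearly independent, and in the local coordinate $\ast-x$ the germ of the arrangement at $x$ becomes the linear coordinate arrangement $\{V(g(\ast-x))\}_{g\in\mathfrak{A}_x}$.

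Next I would run the dichotomy. As $\zeta$ is $\Sigma\mathfrak{A}$-regular and $\Sigma\mathfrak{A}_x\subseteq\Sigma\mathfrak{A}$, it is $\Sigma\mathfrak{A}_x$-regular, so the Lemma before Corollary \ref{JKBasisCor} applies: $\text{JK}^{\mathfrak{A}_x}_{\zeta}$ vanishes identically as a morphism when $\zeta\notin\operatorname{Span}_{\R_{>0}}\{\mathfrak{A}_x\}$, and is a signed iterated residue otherwise. I would then identify the condition $\zeta\in\operatorname{Span}_{\R_{>0}}\{\mathfrak{A}_x\}$ with $\tilde{\epsilon}(x)=1$. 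The point is that the open orthant $\operatorname{Span}_{\R_{>0}}\{\mathfrak{A}_x\}$ is bounded by hyperplanes spanned by $(|Q_0|-2)$-element subsets of $\mathfrak{A}_x\subseteq\mathfrak{A}$, which are among the walls of the $\mathfrak{A}$-chamber decomposition of $V$; hence any $\mathfrak{A}$-chamber meeting this orthant is entirely contained in it, so $\operatorname{Span}_{\R_{>0}}\{\mathfrak{A}_x\}$ is a union of $\mathfrak{A}$-chambers. Since $\zeta$ and $-\tilde{\theta}$ lie in a common $\mathfrak{A}$-chamber by the definition of the global residue, $\zeta$ belongs to the orthant if and only if $-\tilde{\theta}$ does, i.e.\ if and only if the coordinates of $\tilde{\theta}$ in the basis $\mathfrak{A}_x$ are all negative, which is exactly $\tilde{\epsilon}(x)=1$. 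I expect this compatibility between the simplicial cones $\operatorname{Span}_{\R_{>0}}\{\mathfrak{A}_x\}$ and the $\mathfrak{A}$-chamber structure to be the only genuinely delicate point of the argument.

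Finally, for a singular point with $\tilde{\epsilon}(x)=1$ I would evaluate the surviving term. Because $\mathfrak{A}_x$ is a basis, $\operatorname{Span}_{\R_{>0}}\{\mathfrak{A}_x\}$ is itself an $\mathfrak{A}_x$-chamber, so it coincides with the $\mathfrak{A}_x$-chamber $\xi$ containing $\zeta$. The hypothesis on $f$ says that $f(\ast+x)$ is holomorphic near the origin on the complement of the hyperplanes $V(g)$, $g\in\mathfrak{A}_x$, i.e.\ on the $\mathfrak{A}_x$-regular locus $\mathfrak{a}_\C^{reg}$; hence Theorem \ref{JKFlag} gives $\text{JK}^{\mathfrak{A}_x}_{\zeta}(f(\ast+x))=\text{J-K}^{\mathfrak{A}_x}_{\xi}(f(\ast+x))$, and Corollary \ref{JKBasisCor}, applied with $\mathfrak{A}=\mathfrak{A}_x$ and $\xi=\operatorname{Span}_{\R_{>0}}\{\mathfrak{A}_x\}$, computes this as $\operatorname{IR}_0$ in the coordinates given by any ordering of $\mathfrak{A}_x$, in particular independently of that ordering. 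Composing with the translation moving $x$ to $0$, this is precisely $\operatorname{IR}_0(\phi_x(f(\ast)))$. Putting the two cases together yields $\text{JK}^{\mathfrak{A}_x}_{\zeta}(f(\ast+x))=\tilde{\epsilon}(x)\,\operatorname{IR}_0(\phi_x(f(\ast)))$ for every $x\in\mathfrak{M}_{sing}$, and summing over the finite set $\mathfrak{M}_{sing}$ gives the asserted formula.
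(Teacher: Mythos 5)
Your proposal is correct and follows the same route as the paper's (very brief) proof: $R \in \mathfrak{P}$ forces $\mathfrak{A}_x$ to be a basis at each $x \in \mathfrak{M}_{sing}$, after which the local term $\text{JK}^{\mathfrak{A}_x}_{\zeta}(f(\ast + x))$ is evaluated by the ``basis case'' results (the Lemma preceding Corollary \ref{JKBasisCor} together with that Corollary). You have usefully spelled out the one subtle step that the paper leaves implicit in its one-line citation of Corollary \ref{JKBasisCor}, namely that $\operatorname{Span}_{\R_{>0}}\{\mathfrak{A}_x\}$ is a union of $\mathfrak{A}$-chambers (its facets lie on walls of the $\mathfrak{A}$-arrangement), so $\zeta$ lies in this orthant if and only if $-\tilde{\theta}$ does, which is precisely the condition $\tilde{\epsilon}(x)=1$.
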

	\begin{proof}
		Since $R \in \mathfrak{P}$, all the isolated singularities, i.e. points of $\mathfrak{M}_{sing} \subset \mathfrak{a}_{\C}$, are non-degenerate. This is equivalent to saying that $\mathfrak{A}_x$ is a basis of $V$ for all $x \in \mathfrak{M}_{sing}$. The claim now follows by applying Corollary \ref{JKBasisCor} to all isolated singularities.
	\end{proof}
	
	Using Proposition \ref{WeightsArrows}, this computation can be rephrased in terms of stable spanning trees. Let $T$ be a spanning tree of $\bar{ Q}$. We denote by $x_T \in \mathfrak{M}_{sing}$ the point corresponding to the element of $\mathfrak{B}(\mathcal{W})$ associated with $T$ via Proposition \ref{WeightsArrows}. 
	
	\begin{definition} We let $\phi_{x_T}\!: \mathcal{M}_{\mathfrak{a}_\mathbb{C}, x_T} \rightarrow \mathcal{M}_{\mathbb{C}^n, 0}$ be the isomorphism induced by the choice of an ordering of the basis associated with $T$, composed with a translation.
	\end{definition}
	\begin{cor}\label{BETHAbelianTrees}
		Let $R \in \mathfrak{P}$ and let $f \in \operatorname{Mer}(\mathfrak{a}_\mathbb{C})$ be such that, for every $x \in \mathfrak{M}_{sing}$, there is an open ball $D$ around $x$ such that
		\begin{align*}
			f \in \operatorname{Hol}\big(D \setminus \bigcup_{g \in \mathfrak{A}_x} V(g(\ast - x)) \big).
		\end{align*}
		Then,
		\begin{align*}
			\jk(f, \zeta) = \sum_{T \in N^\theta(\bar{ Q})} \operatorname{IR}_0 (  \phi_{x_T}(f(\ast))).
		\end{align*}
	\end{cor}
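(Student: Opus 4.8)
The plan is to deduce this directly from Proposition~\ref{BETHAbelian}, rewriting its right-hand side via the dictionary of Proposition~\ref{WeightsArrows} and the stability criterion of Proposition~\ref{StableAdmissibleProp}. Since $\mathfrak{R} = \emptyset$ in the abelian case, we have $\mathfrak{A} = \mathcal{W}$, and Proposition~\ref{BETHAbelian} already gives
\[
\jk(f, \zeta) = \sum_{x \in \mathfrak{M}_{sing}} \tilde{\epsilon}(x)\, \operatorname{IR}_0(\phi_x(f(\ast))),
\]
so the task reduces to three points: $(i)$ reparametrising the sum over $\mathfrak{M}_{sing}$ as a sum over spanning trees of $\bar{Q}$; $(ii)$ checking that $\phi_x$ and $\phi_{x_T}$ agree under this reparametrisation; and $(iii)$ identifying $\tilde{\epsilon}(x_T)$ with the indicator of $T \in N^\theta(\bar{Q})$.

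For $(i)$, I would first note that genericity $R \in \mathfrak{P}$ (Lemma~\ref{NonDegenerateIntersections}) forces every $x \in \mathfrak{M}_{sing}$ to satisfy $|\mathfrak{A}_x| = |Q_0|-1$, i.e.\ $\mathfrak{A}_x$ is a basis of $V$; by Proposition~\ref{WeightsArrows} this basis corresponds to a unique spanning tree $T$ of $\bar{Q}$, and $x$ is the unique common zero of the affine hyperplanes $V(\rho + R_\rho)$, $\rho \in \mathfrak{A}_x$. Conversely, any basis $B \in \mathfrak{B}(\mathcal{W})$ determines $|Q_0|-1$ linearly independent affine hyperplanes with a single common point $x_B \in \mathfrak{M}_{sing}$, and genericity forces $\mathfrak{A}_{x_B} = B$ exactly (no further hyperplane passes through $x_B$). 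Composing with the bijection $\mathfrak{B}(\mathcal{W}) \leftrightarrow \{\text{spanning trees of }\bar{Q}\}$ of Proposition~\ref{WeightsArrows} yields a bijection $\mathfrak{M}_{sing} \leftrightarrow \{\text{spanning trees of }\bar{Q}\}$, which is precisely the correspondence $T \mapsto x_T$ used in the Definition preceding the Corollary. Under this bijection $\mathfrak{A}_x$ is the basis associated with $T$, so the local isomorphism $\phi_x$ of Proposition~\ref{BETHAbelian} coincides with $\phi_{x_T}$; the choice of ordering of the basis is immaterial for $\operatorname{IR}_0$ applied to germs holomorphic on the regular locus near the origin by Corollary~\ref{JKBasisCor}, and $f(\ast + x)$ is such a germ by hypothesis. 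This settles $(ii)$ as well.

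For $(iii)$, by definition $\tilde{\epsilon}(x_T) = 1$ if and only if the components of $\tilde{\theta}$ with respect to $\mathfrak{A}_{x_T}$ are all negative. By the last assertion of Proposition~\ref{WeightsArrows}, these components equal the components $c_\alpha$ of $\theta$ in the basis $\{e_\alpha\}_{\alpha \in T_1}$ of $\mathcal{H}$. Applying Proposition~\ref{StableAdmissibleProp} to the tree $T$, the condition $c_\alpha < 0$ for all $\alpha \in T_1$ is exactly $\theta$-stability of $T$, i.e.\ $T \in N^\theta(\bar{Q})$. Hence $\tilde{\epsilon}(x_T)$ is the indicator of $N^\theta(\bar{Q})$, and substituting into Proposition~\ref{BETHAbelian} through the reparametrisation of $(i)$ gives
\[
\jk(f,\zeta) = \sum_{T\ \text{sp.\ tree of}\ \bar{Q}} \tilde{\epsilon}(x_T)\, \operatorname{IR}_0(\phi_{x_T}(f(\ast))) = \sum_{T \in N^\theta(\bar{Q})} \operatorname{IR}_0(\phi_{x_T}(f(\ast))),
\]
as claimed. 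This argument is essentially a repackaging of earlier results, so there is no deep obstacle; the only point requiring genuine care is step $(i)$, namely justifying that genericity makes $x \mapsto \mathfrak{A}_x$ a bijection onto $\mathfrak{B}(\mathcal{W})$ (so that the sum over $\mathfrak{M}_{sing}$ carries no multiplicities and exhausts exactly the spanning trees of $\bar{Q}$), together with the verification that the two a priori different definitions of the local isomorphism $\phi_{(\cdot)}$ yield the same value of $\operatorname{IR}_0$ on the class of functions under consideration.
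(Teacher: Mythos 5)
Your proposal is correct and follows essentially the same route the paper takes: the Corollary is, in the paper, an immediate repackaging of Proposition~\ref{BETHAbelian} via the dictionary of Proposition~\ref{WeightsArrows} and the stability criterion of Proposition~\ref{StableAdmissibleProp}, which is precisely what you carry out. You are merely more explicit than the paper about the two points that require care (that genericity of $R$ makes $x\mapsto\mathfrak{A}_x$ a bijection $\mathfrak{M}_{sing}\leftrightarrow\mathfrak{B}(\mathcal{W})$, and that the choice of ordering inside $\phi_x$ is immaterial via Corollary~\ref{JKBasisCor}), both of which the paper treats as implicit.
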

	
		\section{Proof of Theorem \ref{QuiverChiTheorem}}\label{QuiverChiProofSec}
	Let $Q$ be a quiver without loops or oriented cycles. Note that, in the abelian case, the $1$-loop factor  is given by 
	\begin{align*}
		&Z_{1-loop}(u,z) :=\left(- \frac{\pi z}{\sin(\pi z)} \right)^{|Q_0|-1} \left( \prod_{\rho \in \mathcal{W}_\mathbb{C}} \frac{\sin(\pi z( \rho(u) + R_\rho - 1))}{\sin(\pi z(\rho (u) + R_\rho))} \right)^{m_\rho}.
	\end{align*} 
	Therefore the rational limit $\opZ_Q(\mathbb{I})$ of the 1-loop factor is given by
	\begin{align}\label{functionForAbelianEuler}
		\opZ_Q(\mathbb{I}) = \opZ_{Q, R}(\mathbb{I}) = (-1)^{|Q_0|-1} \prod_{\rho \in \mathcal{W}_\mathbb{C}} \left(\frac{\rho(u) + R_\rho - 1}{\rho (u) + R_\rho} \right)^{m_\rho}.
	\end{align}
	Fix $\bar{R} \in \mathfrak{P}$, and a regular stability vector $\zeta$. Then, according to Corollary \ref{BETHAbelianTrees}, we have, for all $\lambda \in \R_{>0}$,
	\begin{equation}\label{SumOverStables}
		\jk(\opZ_{Q, \lambda\bar{R}}(\mathbb{I}), \zeta) = \sum_{T \in N^\theta(\bar{Q})} \operatorname{IR}_0 (  \phi_{x_T}(\opZ_{Q, \lambda\bar{R}}(\mathbb{I})(u))).
	\end{equation}
	Let $v_1, \ldots, v_{|Q_0|-1}$ denote the linear coordinates of $\mathbb{C}^{|Q_0|-1}$ given by its canonical basis. Then, we have an equality of the form
	\begin{align*}
		\phi_{T}(\opZ_{Q, \lambda\bar{R}}) =  \prod_{i = 1}^{|Q_0|-1} \left(1 - \frac{1}{v_i}\right)^{m_i} \prod_{j = 1}^h \left(1 - \frac{1}{L_j(v_1, \ldots, v_{|Q_0|-1}) +   \lambda\gamma_j(\bar{R})}\right), 
	\end{align*}
	for certain $L_j \in (\mathbb{C}^{|Q_0|-1})^\ast$ and $\gamma_j \in (\R\mathcal{W})^\ast$, $\gamma_j \neq 0$. So, if we choose $\lambda$ sufficiently large, the divisor of poles of $\phi_{T}(\opZ_{Q, \lambda\bar{R}})$ intersects the open ball of radius $1$ in a subset contained in the union of the coordinate hyperplanes. And, on the product of annuli $\left(A_0\left(\frac{1}{2}, \, 1\right)\right)^{|Q_0|-1}$, we have the uniform convergence 		\begin{align*}
		\phi_{T}(\opZ_{Q, \lambda\bar{R}}) \xrightarrow{\lambda \rightarrow +\infty} \prod_{i = 1}^{|Q_0|-1} \left(1 - \frac{1}{v_i}\right)^{m_i}.
	\end{align*}
	Then, by Proposition \ref{UniformConvIteratedRes}, we have
	\begin{equation}\label{LargeRResidue}
		\lim_{\lambda \to +\infty}\operatorname{IR}_0 (  \phi_{x_T}(\opZ_{Q, \lambda\bar{R}}(\mathbb{I})(u))) = \operatorname{IR}_0\left(\prod_{i = 1}^{|Q_0|-1} \left(1 - \frac{1}{v_i}\right)^{m_i}\right). 
	\end{equation}
	
	\begin{proof}[Proof of Theorem \ref{QuiverChiTheorem}] By Definition \ref{JKabDef} and \eqref{SumOverStables}, we have
		\begin{align*}
			&\frac{1}{d!}\jk_{ab}(\opZ_Q(d), \zeta) = \sum_{m_* \vdash d} \prod_{l\geq1} \frac{1}{m_{i,l}!}\left(\frac{(-1)^{l-1}}{l^2}\right)^{m_{i,l}} \jk(\opZ_{\widehat{Q}}(\widehat{d}(m_*)), \widehat{\zeta})\\
			&=\sum_{m_* \vdash d} \prod_{l\geq1} \frac{1}{m_{i,l}!}\left(\frac{(-1)^{l-1}}{l^2}\right)^{m_{i,l}} \sum_{T \in N^\theta(\bar{Q}')} \operatorname{IR}_0 (  \phi_{x_T}(\opZ_{Q', \lambda\bar{R}}(\mathbb{I})(u))),
		\end{align*}
		where $Q' = Q'(m_*)$ denotes the subquiver of $\widehat{Q}$ with support $\widehat{d}(m_*)$. Passing to the limit, using \eqref{LargeRResidue}, we find  
		\begin{align*}
			\frac{1}{d!}\jk^{\infty}_{ab}(\opZ_Q(d), \zeta) &= \sum_{m_* \vdash d} \prod_{l\geq1} \frac{1}{m_{i,l}!}\left(\frac{(-1)^{l-1}}{l^2}\right)^{m_{i,l}} \sum_{T \in N^\theta(\bar{Q}')}\operatorname{IR}_0\left(\prod_{a \in T_1} \left(1 - \frac{1}{v_{a}}\right)^{m_a}\right)\\
			&= \sum_{m_* \vdash d} \prod_{l\geq1} \frac{1}{m_{i,l}!}\left(\frac{(-1)^{l-1}}{l^2}\right)^{m_{i,l}} |N^\theta(\bar{Q}'(m_*))| \prod_{a \in Q'_1} m_a .    
		\end{align*}
		According to a result of Weist, proved in \cite{weist} for abelian quivers using torus localisation, we have
		\begin{equation*}
			|N^\theta(\bar{Q}'(m_*))| \prod_{a \in Q'_1} m_a = \chi(\mathcal{M}_\mathbb{I}^{\theta-st}(Q'(m_*))).  
		\end{equation*}
		Thus, we find
		\begin{align*}
			\frac{1}{d!}\jk^{\infty}_{ab}(\opZ_Q(d), \zeta) &= \sum_{m_* \vdash d} \prod_{l\geq1} \frac{1}{m_{i,l}!}\left(\frac{(-1)^{l-1}}{l^2}\right)^{m_{i,l}} \chi(\mathcal{M}_{\widehat{d}(m_*)}^{\theta-st}(\widehat{Q})).    
		\end{align*}
		By Proposition \ref{SSTProp}, we have
		\begin{equation*}
			\chi(\mathcal{M}_{\widehat{d}(m_*)}^{\theta-st}(\widehat{Q})) = \chi(\mathcal{M}_{\widehat{d}(m_*)}^{\theta-sst}(\widehat{Q})) = (-1)^D \bar{\chi}_{\widehat{Q}}(\widehat{d}(m_*), \widehat{\zeta}).
		\end{equation*}
		So,
		\begin{align*}
			\frac{1}{d!}\jk^{\infty}_{ab}(\opZ_Q(d), \zeta) &= \sum_{m_* \vdash d} \prod_{l\geq1} \frac{1}{m_{i,l}!}\left(\frac{(-1)^{l-1}}{l^2}\right)^{m_{i,l}} \bar{\chi}_{\widehat{Q}}(\widehat{d}(m_*), \widehat{\zeta})\\
			&= (-1)^D \bar{\chi}_Q(d, \zeta),    
		\end{align*} 
		by abelianisation for quiver invariants, \eqref{DTmps}. Due to our current sign convention for JK residues (see Remark \ref{JKSignRemark}), this is the claim of Theorem \ref{QuiverChiTheorem}.
	\end{proof}
		
	\section{Recollections on the GHK family}\label{GHKSec}
	
	Here we briefly summarise some key aspects of \cite{ghk}, Sections 1-3. Let $(Y, D)$ denote a Looijenga pair such that the intersection matrix $(D_i \cdot D_j)$ is not negative semidefinite. Then, the cone of numerically effective cycles $\NE(Y)_{\R_{\geq 0}}$ is rational polyhedral, so the (toric) monoid $P = \NE(Y)$ is finitely generated. The GHK mirror to $U = Y\setminus D$, as an algebraic holomorphic symplectic surface, exists globally as a family of affine surfaces 
	\begin{equation*}
		\X \to S = \spec \C[P] = \spec\C[\NE(Y)]. 
	\end{equation*}
	
	Initially however $\X$ is constructed as a formal family around the torus fixed point $0 \in S$, corresponding to the maximal monoid ideal $\mathfrak{m} = P \setminus \{0\}$: that is, around the large complex structure limit 
	\begin{equation*}
		\X_0 \cong \mathbb{V}_n := \mathbb{A}^2_{x_1x_2} \cup \cdots \cup \mathbb{A}^2_{x_n x_1} \subset \mathbb{A}^n.
	\end{equation*} 
	It is enough to consider the case when $(Y, D)$ admits a toric model 
	\begin{equation*}
		p\!:(Y, D) \to (\bar{Y}, \bar{D}), 
	\end{equation*}
	blowing up distinct points $x_{ij}$ on $\bar{D}_i$ for $j = 0, \ldots, \ell_i$, with exceptional divisors $E_{ij}$. Here, the base $(\bar{Y}, \bar{D})$ is a toric surface, endowed with a fixed toric structure, with toric anticanonical divisor $\bar{D}$. By convention, we write $\ell_i = -1$ if no points of $\bar{D}_i$ are blown up. 
	
	In the following we write $M$ for the character lattice of $\bar{Y}$ with a fixed identification $M \cong \Z^2$.
	
	Fixing an ample divisor $H$ on $\bar{Y}$, the orthogonal complement $(p^* H)^{\perp}$ with respect to the intersection form is a face of $P$, generated by the classes $[E_{ij}]$, and $G = P \setminus (p^* H)^{\perp}$ is a prime monoid ideal contained in $\mathfrak{m}$. In order to construct the GHK family to all orders around $0 \in S$, it is enough to construct it to all orders around the locus $\spec \C[P]/G \subset S$: indeed the restriction of the family $\X \to S$ to $\spec \C[P]/G$ is the trivial family
	\begin{equation*}
		\X_G \cong \mathbb{V}_n \times \spec \C[P]/G.
	\end{equation*} 
	In fact it is enough to construct $\X \to S$ to all orders around the open torus orbit 
	\begin{equation*}
		T \subset \spec \C[P]/G \subset S,
	\end{equation*}
	known as the Gross-Siebert locus, so we replace $P$ with its localisation along $P\setminus G$ and $\mathfrak{m}$ with the maximal monoid ideal in this localisation. 
	
	Recall that the family $\X \to S$ is constructed from the data of an integral affine surface $B$ with fan $\Sigma$, the tropicalisation of $Y$, and of the canonical scattering diagram $\fD^{\rm can}$ on $B$, defined in terms of relative Gromov-Witten theory on (toric blowups of) $Y$. Nearby the Gross-Siebert locus $T$, however, the family admits a simpler description. The toric model $(Y, D) \to (\bar{Y}, \bar{D})$ induces a canonical piecewise linear map $\nu\!: B \to \bar{B} = \R^2$, identifying $\Sigma$ with the toric fan $\bar{\Sigma}$, and a scattering diagram 
	\begin{equation*}
		\bar{\fD} = \nu(\fD^{\rm can})
	\end{equation*} 
	on the trivial integral affine surface $\bar{B} = \R^2$. For any ideal $I$ which gives an infinitesimal thickening of $T$, i.e., such that $\sqrt{I} = \mathfrak{m}$, we can construct the GHK family $\bar{\X}_{I, \nu(\fD^{\rm can})}$ with data $\bar{B}$, $\nu(\fD^{\rm can})$, and there is an isomorphism of dense open subsets
	\begin{equation*}
		p\!: \X^o_{I, \fD^{\rm can}} \to \bar{\X}^o_{I, \nu(\fD^{\rm can})}
	\end{equation*}
	over $\spec \C[P]/I$ (morally, $p$ is defined away from the singular fibres of the SYZ fibration). Because of consistency on both sides, that is, by the existence of the canonical regular functions $\vartheta_q(t) = \operatorname{Lift}_{t} (q)$ on $\X_{I, \fD^{\rm can}}$ and $\bar{\vartheta}_q(t) = \overline{\operatorname{Lift}}_{t} (q)$ on $\bar{\X}_{I,\nu(\fD^{\rm can})}$, which are defined using broken lines as a key part of the GHK construction, this is enough to determine the family $\X_{I, \fD^{\rm can}} \to S$ from the data of the scattering diagram $\bar{\fD}$. Finally, the latter can be computed as the consistent completion of the initial scattering diagram
	\begin{align*}
		\bar{\fD}_0 &= \{\bar{\rho}_i, \prod^{\ell_i}_{j=0}(1+ b^{-1}_{ij} \bar{X}_{i}),\,i=1,\ldots, n\}\\
		&= \{\R_{\geq 0} m_i, \prod^{\ell_i}_{j=0}(1 + z^{(m_i, \bar{\varphi}(m_i) - [E_{ij}])})\} 
	\end{align*}
	over the Mumford monoid
	\begin{equation*}
		P_{\bar{\varphi}} = \{(m, \bar{\varphi}(m) + p),\,m\in M,\, p \in P\} \subset M \times P;
	\end{equation*}
	that is, we have
	\begin{equation*}
		\bar{\fD} = \nu(\fD^{\rm can})  = \operatorname{Scatter}(\bar{\fD}_0),
	\end{equation*}
	as scattering diagrams for $P_{\bar{\varphi}}$. Here, $m_1, \ldots, m_n$ denote the rays of the toric fan $\bar{\Sigma}$, and $\bar{\varphi}\!: \bar{B} \to P \otimes \R$ is the canonical convex piecewise linear function determined by the bending parameters $\bar{\varphi}(m_i) = p^*[ \bar{D}_i]$.
	
	Another important aspect of the GHK family is given by the periods conjecture, \cite{ghk} Conjecture 0.20. In \cite{ghk}, Section 0.5.4, this is claimed in our case when $(D_i \cdot D_j)$ is not negative semidefinite, and a proof also recently appeared in \cite{LaiZhou}. Thus, in this case, it is known that the local system over the locus of smooth fibres $S^o \subset S$ given by
	\begin{equation*}
		S^o \ni s \mapsto H_2(\X_s, \Z)/\bra \gamma_s \ket,
	\end{equation*}
	where $\gamma_s$ denotes the (monodromy invariant) class of a suitable real torus (the class of the fibre of a Lagrangian fibration), is trivial. Moreover, we can identify each fibre canonically with the lattice
	\begin{equation*}
		\mathcal{Q} = H_2(U, \Z)/\bra \gamma \ket = \bra D_1, \ldots, D_n\ket^{\perp} \subset H_2(Y, \Z)
	\end{equation*}
	for a suitable $2$-torus class $\gamma$. Under this identification, writing $\tilde{\beta}$ for any lift to $H_2(\X_s, \Z)$ of a class $\beta \in \mathcal{Q} \subset H_2(Y, \Z)$, we have
	\begin{equation*}
		z^{\beta} = \exp\big(2\pi {\rm i} \int_{\tilde{\beta}} \Omega\big),
	\end{equation*} 
	as functions on the structure torus $T_Y = \spec \C[H_2(Y, \Z)] \subset S$, provided the holomorphic symplectic form $\Omega$ on the fibres of $\X \to S^o$ is normalised by the condition $\int_{\gamma_s} \Omega = 1$. Thus, the monomials $z^{\beta}$, $\beta \in \mathcal{Q} \subset H_2(Y, \Z)$ are canonical coordinates on the complex moduli space of the surfaces $\X_s$, nearby the large complex structure limit $s \to 0 \in S$. 
	
	Note that mirror to this, by construction, we have  
	\begin{equation*}
		z^{\beta} = \exp\big(2\pi {\rm i} \int_{ \beta } [B + {\rm i}\omega]\big), 
	\end{equation*}
	as $[B + {\rm i}\omega]$ varies in an open subset of the complexified K\"ahler cone on $Y$ (for which the mirror is smooth). 
	
	Let us return to the case of a toric model $(Y, D)\to (\bar{Y}, \bar{D})$, with initial scattering diagram $\bar{\fD}_0 \subset \bar{B} = \R^2$. In order to make direct contact with complex or K\"ahler parameters, we write this in the form
	\begin{equation*}
		\bar{\fD}_0 = \{\R_{\geq 0} m_i, \prod^{\ell_i}_{j=0}(1 + z^{(0, - [E_{i0}])} z^{(m_i, \bar{\varphi}(m_i) + [E_{i0} - E_{ij}])})\},
	\end{equation*}
	where $E_{i0}$ is some fixed choice of a reference exceptional divisor over $\bar{D}_i$. (Note that $z^{(0, - [E_{i0}])}$ is an invertible element in $P_{\bar{\varphi}}$). We have
	\begin{equation*}
		[E_{i0} - E_{ij}] \cdot D_{k} = 0,\, k = 1\ldots, n, 
	\end{equation*}
	so 
	\begin{equation*}
		\beta_{ij} = [E_{i0} - E_{ij}] \in \mathcal{Q},
	\end{equation*}
	and the monomial $z^{\beta_{ij}}$, as a function on $T_Y$, is given by a period or a complexified K\"ahler parameter, 
	\begin{align*}
		z^{\beta_{ij}} &= \exp\big(2\pi {\rm i} \int_{\tilde{\beta}_{ij}} \Omega\big)= \exp\big(2\pi {\rm i} \int_{\beta_{ij}} [B + {\rm i}\omega]\big).
	\end{align*}
	\section{The GHK family and generalised monodromy}\label{GenMonSec}
	Recall the monoid $P$ is given by the localisation of $\NE(Y)$ along the face $(p^*H)^{\perp}$. We consider the Mumford monoid given by 
	\begin{equation*}
		P_{\bar{\varphi}} = \{(m, \bar{\varphi}(m) + p),\,m\in M,\, p \in P\} \subset M \times P,
	\end{equation*}
	with maximal monoid ideal $\mathfrak{m} = P_{\bar{\varphi}} \setminus P^{\times}_{\bar{\varphi}}$. We often identify $M$ with $M \times \{0\} \subset P'_{\bar{\varphi}}$, implicitly. We define a skew-symmetric, integral bilinear form on the lattice $P^{\rm{gp}}_{\bar{\varphi}}$, given by
	\begin{equation*}
		\bra (m, \bar{\varphi}(m) + p), (m', \bar{\varphi}(m') + p')\ket = \bra m , m'\ket
	\end{equation*}
	where $\bra m, m'\ket$ denotes the bilinear form on $M \cong \Z^2$ (with our fixed identification) induced by the determinant. 
	
	The automorphisms $\theta_{\fd}$ attached to the rays $(\fd, f_{\fd})$ of the scattering diagram $\bar{\fD}$ are $\C$-algebra automorphisms of $R_k := \C[P_{\bar{\varphi}}]/\mathfrak{m}^k$, for $k \geq 1$, defined by
	\begin{equation*}
		\theta_{\fd}(z^{(m, \bar{\varphi}(m) + p)}) = z^{(m, \bar{\varphi}(m) + p)} f^{\bra m_{\fd}, m\ket}_{\fd}.
	\end{equation*}
	The initial scattering diagram $\bar{\fD}_0$ is described by the $\C$-algebra automorphisms of $R_k$ acting by
	\begin{equation*}
		\theta_{\bar{\rho}_i, j}(z^{(m,\bar{\varphi}(m) + p)}) = z^{(m,\bar{\varphi}(m) + p)} (1 + z^{(m_i, \bar{\varphi}(m_i) - [E_{ij}])})^{\bra m_i, m\ket},\, j = 0,\ldots,\ell_i. 
	\end{equation*}
	
	Similarly, we can consider the monoid given by
	\begin{equation*}
		P'_{\bar{\varphi}} = \{(m, \bar{\varphi}(m) + v),\,m\in M,\, v \in \bra [E_{i0} - E_{ij}]\ket, \, j = 0,\ldots,\ell_i\} \subset P_{\bar{\varphi}}.
	\end{equation*}
	Then, setting 
	\begin{equation*}
		\tau_i = z^{(0, - [E_{i0}])},\,i=1,\dots,n,
	\end{equation*}
	and, more generally, for later applications,
	\begin{equation*}
		\tau^{(m, \bar{\varphi}(m) + \sum_{i} k_i \beta_{ij})} = \prod_i \tau^{k_i}_i, 
	\end{equation*}
	we can regard $\theta_{\bar{\rho}_i, j}$ as $\C[\![\tau_1,\ldots,\tau_n]\!]$-algebra automorphisms of 
	\begin{equation*}
		R'_k := (\C[P'_{\bar{\varphi}}]/(\mathfrak{m}^k\cap P'_{\bar{\varphi}}))[\![\tau_1,\ldots, \tau_n]\!],
	\end{equation*} 
	acting by
	\begin{equation*}
		\theta_{\bar{\rho}_i, j}(z^{(m,\bar{\varphi}(m) + v)}) = z^{(m,\bar{\varphi}(m) + v)} (1 + \tau_i z^{(m_i, \bar{\varphi}(m_i) + [E_{i0} - E_{ij}])})^{\bra m_i, m\ket}.
	\end{equation*}
	
	We consider the problem of consistent completion for the scattering diagram
	\begin{equation*}
		\bar{\fD}'_0 = \{\R_{\geq 0} m_i, \prod^{\ell_i}_{j=1}(1 + \tau_i z^{(m_i, \bar{\varphi}(m_i) + [E_{i0} - E_{ij}])})\} 
	\end{equation*}
	over $R'_k$; that is, we study
	\begin{equation*}
		\bar{\fD}' = \operatorname{Scatter}(\bar{\fD}'_0) = \{(\mathfrak{d}', f'_{\mathfrak{d}'})\}.
	\end{equation*}
	Clearly, by setting $\tau_i = z^{(0, - [E_{i0}])}$, we recover the scattering diagrams for the toric model $(Y',D') \to (\bar{Y}, \bar{D})$ of the log Calabi-Yau surface $U' = Y'\setminus D'$ which blows up distinct points $x_{ij}$ on $\bar{D}_{i}$ for $i = 1, \ldots, n$, $j = 1,\ldots,\ell_i$. Deformations of $(Y', D')$ can be regarded as deformations of $(Y, D)$ for which the blowup points $x_{i0}$, for $i = 1, \ldots, n$, remain fixed.    
	
	In the Introduction, Section \ref{outline}, we explained and provided ample motivation (as well as several references) for our main technique: describing the scattering process from $\bar{\fD}'_0$ to $\bar{\fD}'$ in terms of the analytic continuation of (flat sections of) a flat connection $\nabla_{\bar{\fD}'_0}$, to (flat sections of) a flat connection $\nabla_{\bar{\fD}'}$. Here we follow closely the reference \cite{fgs} (in particular, see \cite{fgs}, Sections 2 and 3 for introductory material).
	
	Thus, we first consider the problem of constructing a meromorphic connection $\nabla_{\bar{\fD}'_0}$ on the trivial principal $\Aut (R'_k)$-bundle over $\PP^1 = \C \cup \{\infty\}$, for which the generalised monodromy should be given by the automorphisms $\theta_{\bar{\rho}_i, j} \in \Aut(R'_k)$, $j=1,\ldots,\ell_i$, appearing in the initial scattering diagram $\bar{\fD}'_0$, along some corresponding rays $\bar{\rho}_{ij} \subset \C \cong M \otimes \R$. Note that here we use our fixed identification $M \cong \Z^2$ and the standard identification between $\R^2$ and $\C$. The simplest possible type for such a connection requires a simple pole at infinity, and a double pole at $0$. 
	
	In order to apply the results of \cite{fgs}, we choose a homomorphism $\cZ\!:(P'_{\bar{\varphi}})^{\rm{gp}} \to \C$ (a ``central charge''), factoring through a corresponding homomorphism $\cZ\!:\mathcal{Q} \to \C$. 
	\begin{rmk} Such $\cZ$ is in fact a morphism of trivial local systems over the smooth locus $S^o$, from $H_2(\X_s)$ to $\underline{\C}$. However, at this point of the discussion, there is no advantage in specialising $\cZ$ to the periods of $\Omega$ (or to the K\"ahler parameters), that is, it is easier to allow more general, auxiliary central charges.  
	\end{rmk} 
	The homomorphism $\cZ$ can be regarded as a $\C[\![\tau_1,\ldots,\tau_n]\!]$-linear derivation of $R'_k$, by setting $\cZ(z^{\alpha}) = \cZ(\alpha) z^{\alpha}$ for $\alpha \in (P'_{\bar{\varphi}})^{\rm{gp}}$, so, as in \cite{fgs}, we have
	\begin{align}\label{rank1connection}
		\nabla_{\bar{\fD}'_0} = \frac{\cZ}{t^2} + \frac{f}{t},  
	\end{align}
	where $t$ denotes a variable on $\C^* \subset \PP^1$, and $f$ is a suitable derivation of $R'_k$. 
	\begin{prop} There exists a unique such $\nabla_{\bar{\fD}'_0}$ with generalised monodromy given by the automorphisms $\theta_{\bar{\rho}_i, j}$, $j=1,\ldots,\ell_i$ along the ray
		\begin{align*}
			\bar{\rho}_{ij}(\cZ ) &= \R_{> 0} \cZ ((m_i, \bar{\varphi}(m_i) + \beta_{ij})) = \R_{> 0} \cZ_{\bar{s}}(\beta_{ij}).
		\end{align*}
	\end{prop}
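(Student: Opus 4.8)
The plan is to reduce the statement to the rank-one, abelian case of the general existence and uniqueness result for meromorphic connections with prescribed generalised (Stokes) monodromy proved in \cite{fgs}. The connection \eqref{rank1connection} takes values in the Lie algebra of derivations of $R'_k$, but since $R'_k$ is a (truncated) monoid algebra, this Lie algebra is abelian: a derivation is determined by its values on the generators $z^\alpha$, and one checks that $[\cZ, f] = 0$ automatically, because both $\cZ$ and the Hamiltonian-type derivations $f$ associated to a scattering function act diagonally on the monomial basis. Hence the structure group is abelian, and the irregular Riemann–Hilbert problem for a connection with a double pole at $0$ and a simple pole at $\infty$ becomes essentially a linear problem once the Stokes rays are fixed by the Stokes data $\cZ$.

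First I would spell out the Stokes structure at $t=0$: the leading term $\cZ/t^2$ has irregular type governed by the linear functional $\cZ$ on $(P'_{\bar\varphi})^{\mathrm{gp}}$, and its Stokes rays, in the $t$-plane, are exactly the directions along which $e^{\cZ(\alpha)/t}$ changes dominance, i.e. the rays $\R_{>0}\,\cZ(\alpha)$ for $\alpha$ ranging over the relevant monomials $(m_i,\bar\varphi(m_i)+\beta_{ij})$. This matches the claimed ray $\bar\rho_{ij}(\cZ) = \R_{>0}\,\cZ((m_i,\bar\varphi(m_i)+\beta_{ij})) = \R_{>0}\,\cZ_{\bar s}(\beta_{ij})$ (the last equality because $\cZ$ factors through $\mathcal{Q}$ and $\beta_{ij}\in\mathcal{Q}$, so the $M$-component contributes nothing — this is where the hypothesis that $\cZ$ factors through $\mathcal{Q}$ is used). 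Second I would invoke the main theorem of \cite{fgs}: given the irregular type $\cZ/t^2$ and a choice of Stokes factor $\theta_{\bar\rho_{ij},j}\in\Aut(R'_k)$ along each Stokes ray $\bar\rho_{ij}$, there is a unique meromorphic connection of the form $\cZ/t^2 + f/t$ on $\PP^1$, holomorphic on $\C^*$ with at worst a simple pole at $\infty$, whose generalised monodromy (product of Stokes factors around $0$, together with the formal monodromy) realises the prescribed data. The residue $f$ at $t=0$ is then determined by resumming the Stokes factors; because the structure group is abelian and $R'_k$ is complete with respect to $\mathfrak{m}$, this resummation converges order by order in $\mathfrak{m}$, so $f$ is a well-defined $\C[\![\tau_1,\dots,\tau_n]\!]$-linear derivation and the connection exists. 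Uniqueness follows since any two such connections have the same irregular type and the same Stokes/monodromy data, hence differ by an automorphism that is trivial by the rigidity part of the \cite{fgs} statement.

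The one point that needs genuine care — and which I expect to be the main obstacle — is checking that the hypotheses of the \cite{fgs} existence theorem are literally satisfied in our setup: namely that the $\theta_{\bar\rho_i,j}$ are admissible Stokes factors (they are unipotent, being of the form $z^\alpha\mapsto z^\alpha(1+\tau_i z^{(m_i,\bar\varphi(m_i)+[E_{i0}-E_{ij}])})^{\bra m_i,m\ket}$, hence lie in the pro-unipotent group used in \cite{fgs}), and that the ``positivity/projectivity'' condition making the rays $\bar\rho_{ij}$ well-separated Stokes rays holds — equivalently, that the arguments $\arg\cZ(\alpha)$ of the relevant monomials are distinct, which one arranges by choosing $\cZ$ generically (a non-resonance condition on the central charge). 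One should also verify that $f$ so constructed is independent of the ordering chosen for the Stokes rays, i.e. that reordering the product of commuting unipotent Stokes factors does not change the residue — this is immediate from abelianness. Once these admissibility and genericity points are recorded, the Proposition is a direct application of \cite{fgs}, and the identification $\bar\rho_{ij}(\cZ)=\R_{>0}\cZ_{\bar s}(\beta_{ij})$ is just the observation about $\cZ$ factoring through $\mathcal{Q}$ noted above.
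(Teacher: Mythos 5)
Your overall strategy — reduce the statement to the irregular Riemann--Hilbert existence and uniqueness theorems of \cite{fgs} (namely Theorems 4.1 and 4.2), after identifying the Stokes rays and checking admissibility of the Stokes factors — is exactly what the paper does; its proof is a one-line citation of those two theorems.

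However, there is a genuine error in your proof that you lean on repeatedly: the Lie algebra of derivations in which this connection takes values is \emph{not} abelian, and $[\cZ, f] \neq 0$. The derivation $\cZ$ acts diagonally, $\cZ(z^\alpha) = \cZ(\alpha)\, z^\alpha$, but the relevant derivations $f$ are Hamiltonian with respect to the tropical vertex Poisson structure $\{z^\alpha, z^\beta\} = \bra \alpha, \beta\ket z^{\alpha+\beta}$, and a direct check gives $[\cZ, \{z^\alpha, -\}](z^\gamma) = \cZ(\alpha)\bra\alpha,\gamma\ket z^{\alpha+\gamma}$, which is generically nonzero. For the same reason the Stokes factors $\theta_{\bar\rho_i,j}$ attached to rays with different $i$ do \emph{not} commute: $\theta_{\fd}$ multiplies $z^{(m,\cdot)}$ by $f_{\fd}^{\bra m_{\fd}, m\ket}$, and for $\bra m_{\fd}, m_{\fd'}\ket \neq 0$ the two automorphisms fail to commute. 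Indeed, this non-commutativity is the entire content of the scattering (consistent completion) process, and the residue $f$ of $\nabla_{\bar\fD'_0}$ genuinely depends on the cyclic order of the Stokes rays --- that dependence is precisely the isomonodromy phenomenon exploited in Proposition \ref{constMonoProp} and Corollary \ref{isomonoCor}. So your claims that ``the irregular Riemann--Hilbert problem becomes essentially a linear problem,'' that uniqueness follows by abelian rigidity, and that ``reordering the product of commuting unipotent Stokes factors does not change the residue'' are all built on a false premise and do not constitute a proof. The correct route is simply to observe that $\cZ$, regarded as a grading element, together with the pro-nilpotent (but non-abelian) ideal of Hamiltonian derivations, fits the hypotheses of \cite{fgs} Theorems 4.1 and 4.2 verbatim, and that those theorems already deliver both existence and uniqueness in the non-commutative setting; the identification of $\bar\rho_{ij}(\cZ)$ with $\R_{>0}\cZ_{\bar s}(\beta_{ij})$ via the factoring of $\cZ$ through $\mathcal{Q}$ is the one point you have correct.
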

	\begin{proof}
		This follows from \cite{fgs}, Theorems 4.1 and 4.2.
	\end{proof} 
	Let us write $\hat{\vartheta}$ for the unique normalised, $\Aut(R'_k)$-valued flat section of $\nabla_{\bar{\fD}'_0}$. By construction, it satisfies the (generalised) monodromy condition
	\begin{equation*}
		\hat{\vartheta}(t^+) = \theta_{\bar{\rho}_{i}, j} \circ \hat{\vartheta}(t^-),     
	\end{equation*}
	along the ray $\bar{\rho}_{ij}(\cZ)$. According to \cite{fgs}, for generic values of $\cZ$, there is an explicit formula for $\hat{\vartheta}$ in terms of a sum over connected, rooted trees $T$, with vertices labelled by elements of the set
	\begin{equation*}
		\LL = \{\Z_{>0}(m_i, \bar{\varphi}(m_i) + \beta_{ij}), i=1,\ldots,n,\,j=1,\ldots,\ell_i\} \subset P'_{\bar{\varphi}}.
	\end{equation*}  
	Given $\alpha \in \LL$, we write $\alpha = k \alpha'$ with $\alpha' \in \LL$ primitive, and set 
	\begin{equation*}
		w(\alpha) = \frac{1}{k^2}.
	\end{equation*} 
	Note that we can regard a rooted tree $T$ as a directed graph, by fixing the unique orientation of the edges which flows away from the root. Then, we define the weight of $T$ as
	\begin{equation*}
		w_{T} = \frac{w(\alpha_T) \alpha_T}{|\Aut(T)|}  \in P'_{\bar{\varphi}}\otimes\Q,
	\end{equation*} 
	where $\alpha_T$ denotes the decoration at the root $i_0 \in T_0$. Similarly, we attach to $T$ a meromorphic function $\operatorname{W}_T(w)$ of the variables $w = \{w_i, \, i \in T_0\}$, given by
	\begin{equation}
		\operatorname{W}_T(w ) = \prod_{\{i \to j\} \in T_1} \frac{w_i}{w_j}  \frac{ \bra w(\alpha_i)\alpha_i, \alpha_j\ket}{w_j - w_i} .
	\end{equation}
	Here, we set $w_{i_0} = w_0$ for the root. We also introduce an integration kernel $\rho_T(t)$, depending on $\cZ$, given by
	\begin{equation}\label{propagators}
		\rho_T(t, w) = \frac{1}{2\pi {\rm i} w_0} \frac{t}{w_0 - t} \prod_{i\in T_0} \frac{1}{2\pi {\rm i}} \exp\left(-\frac{\cZ(\alpha_i)}{w_i}\right), 
	\end{equation} 
	and an integration cycle, also depending on $\cZ$, given by
	\begin{equation*}
		C_T = \{ w_0 \in \R_{> 0} \cZ(\alpha_T);\, w_{j} \in \R_{> 0} \cZ(\alpha_j) \textrm{ for } \{i \to j\} \in T_1\}.
	\end{equation*}
	\begin{prop} Suppose any two rays $\bar{\rho}_{ij}(\cZ)$, $\bar{\rho}_{i'j'}(\cZ)$ with $i \neq i'$ are distinct. Then, for $p \in P'_{\bar{\varphi}}$, we have 
		\begin{equation}\label{explicitY}
			\hat{\vartheta}(t)(z^{p}) = z^p \exp\left(\sum_T \bra p, w_T \ket \prod_{i \in T_{0}} (\tau  z)^{\alpha_i} \int_{C_T} \rho(t, w) \operatorname{W}_T(w) \right), 
		\end{equation}
		where $\int_{C_T} \rho(t, w) \operatorname{W}_T(w)$ denotes the iterated integral along the cycle $C_T$, computed according  to the orientation of $T$.
	\end{prop}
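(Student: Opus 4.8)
The plan is to obtain \eqref{explicitY} as a specialisation of the tree expansion for flat sections of irregular connections of the shape $\cZ/t^2 + f/t$ developed in \cite{fgs}; essentially all of the work is in matching the combinatorial data of the present setup with the general recipe there. Recall that $\hat{\vartheta}$ is the unique normalised $\Aut(R'_k)$-valued flat section of $\nabla_{\bar{\fD}'_0}$, and that, by the previous Proposition, the derivation $f$ appearing in $\nabla_{\bar{\fD}'_0}$ decomposes as a sum $f = \sum_{i,j} f_{ij}$ of ``initial'' pieces, one along each ray $\bar{\rho}_{ij}(\cZ)$, with $f_{ij}$ read off from the weight functions $1 + \tau_i z^{(m_i, \bar{\varphi}(m_i) + \beta_{ij})}$ of $\bar{\fD}'_0$ and acting on a monomial $z^p$ by multiplication by $\bra m_i, m\ket$ times the corresponding initial term. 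First I would pass to the logarithm, writing $\hat{\vartheta}(t)(z^p) = z^p \exp(\Phi_p(t))$; horizontality then becomes an inhomogeneous first-order equation for $\Phi_p$ whose inhomogeneity is a polynomial expression in the lower-order $\Phi$'s, and this is exactly the structure that is solved, order by order in the $\tau_i$ (equivalently, modulo $\mathfrak{m}^k$), by a Butcher-type series indexed by rooted trees.

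Then I would run the recursion in its Borel/Laplace-transformed form. Because $f$ acts as a derivation rather than merely a linear operator, iterating the integral form of the horizontality equation does not produce a time-ordered chain but branches at every step: the Leibniz rule applied when $f_{ij}$ hits a product of previously produced monomials creates the vertices and edges of a rooted tree $T$, each vertex decorated by the corresponding class in $\LL$. Keeping track of the factors, one sees that: (a) the double pole at $t=0$ with leading term $\cZ$ contributes, after Borel transform, the exponential propagators $\exp(-\cZ(\alpha_i)/w_i)$ carried by the vertices and the integration cycles $C_T = \{w_i \in \R_{>0}\cZ(\alpha_i)\}$; (b) the simple pole at $\infty$ contributes the Cauchy kernel $\frac{1}{2\pi {\rm i}\, w_0}\frac{t}{w_0 - t}$ joining the root variable $w_0$ to the external variable $t$; (c) each edge $\{i \to j\}$ of $T$ contributes the rational factor $\frac{w_i}{w_j}\frac{\bra w(\alpha_i)\alpha_i, \alpha_j\ket}{w_j - w_i}$, coming from the skew form $\bra -,-\ket$ on $P^{\rm gp}_{\bar{\varphi}}$ evaluated on the pair of interacting rays, together with the weight $w(\alpha) = 1/k^2$ bookkeeping non-primitive decorations. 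These assemble precisely into $\rho_T(t, w)\operatorname{W}_T(w)$ and the prefactor $\bra p, w_T\ket$. The automorphism factor $1/|\Aut(T)|$ appears because the unordered sum over ``interaction events'' in the recursion overcounts each isomorphism class of decorated tree by exactly $|\Aut(T)|$.

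Finally I would check well-definedness and equality with $\hat{\vartheta}$. Over $R'_k$ only finitely many trees contribute to each monomial $\prod_i (\tau z)^{\alpha_i}$, so the sum over $T$ is finite. The factor in (c) vanishes whenever the two endpoints of an edge carry classes supported on the same ray of $\bar{\Sigma}$ (since then $\bra m_a, m_b\ket = 0$), so only edges between vertices with $m_a \neq m_b$ survive; for those, the hypothesis that the rays $\bar{\rho}_{ij}(\cZ)$, $\bar{\rho}_{i'j'}(\cZ)$ with $i\neq i'$ be pairwise distinct guarantees that $\cZ(\alpha_a)$, $\cZ(\alpha_b)$ lie on distinct rays of $\C$, so the cycle $C_T$ avoids the polar locus $w_a = w_b$ of $\operatorname{W}_T$ and $\int_{C_T}\rho_T(t,w)\operatorname{W}_T(w)$ is a genuinely convergent iterated integral in the order prescribed by the orientation of $T$. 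To conclude, one verifies that crossing a ray $\bar{\rho}_{ij}(\cZ)$ the jump of this iterated integral — produced by the residue of the Cauchy kernel together with the corresponding jump of $C_T$ — reproduces exactly $z^p \mapsto z^p(1 + \tau_i z^{(m_i, \bar{\varphi}(m_i) + \beta_{ij})})^{\bra m_i, m\ket}$, i.e. the automorphism $\theta_{\bar{\rho}_i, j}$, so that by the uniqueness part of the previous Proposition the ansatz coincides with $\hat{\vartheta}$. The main obstacle is precisely this combinatorial matching: confirming that the perturbative solution of the non-abelian horizontality equation reorganises \emph{exactly} into the stated rooted-tree sum, with the specified kernels $\rho_T$, weight $w_T$, and function $\operatorname{W}_T$ — including every $2\pi {\rm i}$ normalisation and the factor $1/|\Aut(T)|$. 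Since \eqref{explicitY} is in substance a transcription of the formula of \cite{fgs}, this is a careful bookkeeping of conventions (orientations, signs, the role of $w(\alpha) = 1/k^2$) rather than a new analytic input.
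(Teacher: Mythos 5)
Your proposal is correct and follows essentially the same route as the paper: the paper proves this Proposition by citing \cite{fgs}, Lemma 4.9 directly, and your write-up simply unpacks the mechanism behind that lemma (the Butcher-series / iterated-integral tree expansion of the normalised flat section of $\cZ/t^2 + f/t$, with the exponential propagators, the Cauchy kernel at the root, the skew-form edge factors, the $w(\alpha)=1/k^2$ and $1/|\Aut(T)|$ normalisations, and the role of the distinct-ray hypothesis in keeping $C_T$ off the polar locus of $\operatorname{W}_T$ all correctly identified). In other words you are reconstructing rather than re-deriving the cited formula, which is exactly what the one-line proof in the paper delegates to \cite{fgs}.
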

	\begin{proof} This follows from \cite{fgs}, Lemma 4.9.
	\end{proof}
	Let us describe certain holomorphic families of flat connections, specialising to $\nabla_{\bar{\fD}'_0}$. For fixed $r \in \Z/n\Z$, $\bar{\phi} \in (0,\pi/2)$, consider the open subset $S_r(\bar{\phi}) \subset \Hom(\mathcal{Q}, \C)$ such that, for $\cZ \in S_r(\bar{\phi})$, $i \neq r, r+1$, we have 
	\begin{equation*}
		\bar{\rho}_{ij}(\cZ) \subset \{ e^{i \phi} \bar{\rho}_{i},\,\phi \in (-\bar{\phi},\bar{\phi}) \},
	\end{equation*}
	while
	\begin{equation*}
		\bar{\rho}_{rj}(\cZ), \bar{\rho}_{r+1,j}(\cZ) \subset \{ e^{i \phi} \bar{\sigma}_{r, r+1},\,\phi \in (-\bar{\phi},\bar{\phi}) \}, 
	\end{equation*} 
	where $\bar{\sigma}_{r, r+1}$ denotes the corresponding cone of the fan $\bar{\Sigma}$.
	\begin{prop}\label{constMonoProp} Fix $\bar{\phi}$ sufficiently small. Then there is a unique family of connections $\nabla(\cZ)$, of the form \eqref{rank1connection}, parametrised by $S_r(\bar{\phi})$, which has constant generalised monodromy, and which specialises to the connection $\nabla_{\bar{\fD}'_0}$ at a point $\cZ_0$.
	\end{prop}
	\begin{proof} 
		This follows from \cite{fgs} Theorems 4.1 and 4.2.
	\end{proof}
	
	We now specialise our discussion to the case $n = 3$, $\ell_3 = -1$. Thus, the toric base is the surface $\bar{Y} \cong \PP^2$ with standard fan $\{\bar{\rho}_1, \bar{\rho}_2, \bar{\rho}_3\} = \R_{>0}\{m_1, m_2, m_3\}$, and the toric models $(Y, D) \to (\bar{Y}, \bar{D})$ and $(Y', D') \to (\bar{Y}, \bar{D})$ do not blow up points along the toric divisor $\bar{D}_3$. The proper transforms $D_3$, $D'_3$ have positive self-intersection, which implies that the log Calabi-Yau surfaces $U = Y\setminus D$, $U' = Y' \setminus D'$ are affine.
	
	Fix central charges $\cZ_0,\, \cZ^* \in S_r(\bar{\phi})$, satisfying 
	\begin{align*}
		\bar{\rho}_{1j}(\cZ_0) = \bar{\rho}_{2},\,\bar{\rho}_{2j}(\cZ_0) = \bar{\rho}_{1};\,\,\bar{\rho}_{1j}(\cZ^*) = \bar{\rho}_{1},\,\bar{\rho}_{2j}(\cZ) = \bar{\rho}_{2}. 
	\end{align*}
	Recall that the mirror family to $(Y', D')$ is constructed around $\X_0 \cong \mathbb{V}_n$ using the consistent scattering diagram $\bar{\fD}' = \operatorname{Scatter}(\bar{\fD}'_0) = \{(\mathfrak{d}', f'_{\mathfrak{d}'})\}$.
	\begin{cor}\label{isomonoCor} Let $n = 3$, $\ell_3 = -1$. (That is, suppose that the toric model has base $\bar{Y} \cong \PP^2$, and no blowups along $\bar{D}_3$). Consider the family $\nabla(\cZ)$ for $\cZ \in S_1(\bar{\phi})$, with initial value $\nabla(\cZ_0) = \nabla_{\bar{\fD}'_0}$, as in Proposition \ref{constMonoProp}. Then, the (generalised) monodromy of the connection $\nabla(\cZ^*)$ is given by the collection of rays and automorphisms $\{(\mathfrak{d}', \theta_{\mathfrak{d}'} = \theta_{f'_{\mathfrak{d}'}})\}$ of the consistent diagram $\bar{\fD}'$.  
	\end{cor}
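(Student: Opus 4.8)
The plan is to combine three ingredients: the dictionary of \cite{fgs} between connections of the form \eqref{rank1connection} and consistent scattering diagrams over $R'_k$; the constancy of the generalised monodromy in the family $\nabla(\cZ)$ supplied by Proposition \ref{constMonoProp}; and the uniqueness of the consistent completion $\operatorname{Scatter}(\bar{\fD}'_0)$ established in \cite{ghk}.

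First I would recall, following \cite{fgs}, Theorems 4.1 and 4.2, that a connection $\nabla$ of the form \eqref{rank1connection} with prescribed irregular part $\cZ/t^2$ determines and is determined by its generalised monodromy, which can be packaged as a scattering diagram $\operatorname{SC}(\nabla)$ over $R'_k$ whose rays are the active directions $\R_{>0}\cZ(\alpha)$ carrying the associated Stokes factors, and that $\operatorname{SC}(\nabla)$ is automatically consistent. In particular, since the connection $\nabla_{\bar{\fD}'_0} = \nabla(\cZ_0)$ of the Proposition preceding the Corollary is \emph{defined} by the requirement that its generalised monodromy contain the automorphisms $\theta_{\bar{\rho}_i, j}$, $j = 1, \ldots, \ell_i$, along the rays $\bar{\rho}_{ij}(\cZ_0)$ (with $\bar{\rho}_{1j}(\cZ_0) = \bar{\rho}_2$, $\bar{\rho}_{2j}(\cZ_0) = \bar{\rho}_1$, and no walls for $i = 3$ since $\ell_3 = -1$), and since $\operatorname{SC}(\nabla(\cZ_0))$ must be consistent, the uniqueness of the consistent completion forces $\operatorname{SC}(\nabla(\cZ_0)) = \operatorname{Scatter}(\bar{\fD}^{\mathrm{in}}(\cZ_0))$, where $\bar{\fD}^{\mathrm{in}}(\cZ)$ denotes the initial diagram carrying the walls $\theta_{\bar{\rho}_i, j}$ at the positions $\bar{\rho}_{ij}(\cZ)$. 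The walls of $\operatorname{SC}(\nabla(\cZ_0))$ not produced by scattering — equivalently, the $\mathfrak{m}$-adically lowest-order ones, of weight $1 + \tau_i z^{(m_i, \bar{\varphi}(m_i) + [E_{i0} - E_{ij}])}$ — are exactly these initial walls.

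Next I would exploit that the family $\nabla(\cZ)$, $\cZ \in S_1(\bar{\phi})$, has constant generalised monodromy, together with the fact that $S_1(\bar{\phi})$ is connected: it is cut out inside $\Hom(\mathcal{Q}, \C)$ by conditions of the form $\cZ(\gamma) \in K$ for open convex cones $K \subset \C$, hence is convex, so $\cZ_0$ and $\cZ^*$ are joined by a segment. Under the dictionary above, constancy of the generalised monodromy means that the diagrams $\operatorname{SC}(\nabla(\cZ))$ along this segment are related by the wall-crossing transformations forced by the motion of the active directions $\R_{>0}\cZ(\alpha)$; these preserve consistency. The crucial observation is that the initial walls are \emph{protected} under such wall-crossings — every newly created wall, and every correction to an existing wall, is of strictly higher $\mathfrak{m}$-adic order — so the initial walls persist unchanged, merely travelling to the positions $\bar{\rho}_{ij}(\cZ^*) = \bar{\rho}_i$ ($i = 1, 2$) dictated by the irregular part $\cZ^*/t^2$. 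At $\cZ = \cZ^*$, for each $i = 1, 2$ the $\ell_i$ initial walls in direction $\bar{\rho}_i$ become collinear and compose to the single wall $\big(\bar{\rho}_i,\, \prod_{j = 1}^{\ell_i}(1 + \tau_i z^{(m_i, \bar{\varphi}(m_i) + [E_{i0} - E_{ij}])})\big)$; together these are precisely the walls of $\bar{\fD}'_0$.

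Finally, $\operatorname{SC}(\nabla(\cZ^*))$ is a consistent scattering diagram over $R'_k$ whose initial part is $\bar{\fD}'_0$ and all of whose remaining walls are outgoing rays of strictly higher order; by the uniqueness of the consistent completion this yields $\operatorname{SC}(\nabla(\cZ^*)) = \operatorname{Scatter}(\bar{\fD}'_0) = \bar{\fD}'$, which is the assertion. I expect the main obstacle to be the precise formulation, within the framework of \cite{fgs}, of the step ``constant generalised monodromy $\Rightarrow$ the associated scattering diagram is constant up to wall-crossing'', and especially the verification that the order-minimal walls are genuinely rigid as $\cZ$ crosses the loci where active directions collide, so that they can legitimately be tracked from $\cZ_0$ to $\cZ^*$; once this is in place, the conclusion follows formally from the uniqueness of $\operatorname{Scatter}$.
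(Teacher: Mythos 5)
Your approach is essentially the paper's: the proof there consists of one sentence invoking the fact that the consistent completion $\operatorname{Scatter}(\bar{\fD}'_0)$ corresponds to the notion of a constant family of (positive) stability data from \cite{fgs}, Sections 2.1--2.3, which corresponds in turn to the constant generalised monodromy condition via \cite{fgs}, Section 3 and Theorems 4.1--4.2. What you have done is unpack that one-liner into a ``tracking the walls through the critical times'' argument, and you correctly identify the sole nontrivial step: showing that the order-minimal (initial) wall factors are rigid as $\cZ$ moves and the active rays collide and separate. As written, the justification you give for that step is slightly self-contradictory (``every correction to an existing wall is of strictly higher $\mathfrak{m}$-adic order, so the initial walls persist \emph{unchanged}'' -- a higher-order correction would still be a change). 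The cleaner statement, which is exactly what the ``constant family of positive stability data'' formalism of \cite{fgs} supplies, is that under the wall-crossing relation the factors at the extremal rays $\bar{\rho}_1,\bar{\rho}_2$ bounding the convex cone $\bar{\sigma}_{12}$ are unaltered, with all new factors appearing strictly in the interior; this, combined with the uniqueness of $\operatorname{Scatter}$, gives the result, as you conclude. So: same route, with your honest flag of the ``main obstacle'' being precisely what the paper outsources to \cite{fgs}.
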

	\begin{proof} The consistent completion of $\bar{\fD}'_0$ corresponds to the notion of a constant family of (positive) stability data discussed in \cite{fgs} Sections 2.1-2.3, and thus to the constant generalised monodromy condition, by \cite{fgs} Section 3 and Theorems 4.1, 4.2.   
	\end{proof}
	Let $\hat{\vartheta}(t, \cZ)$ denote the unique normalised, $\Aut(R'_k)$-valued flat section of $\nabla(\cZ)$. Then, $\hat{\vartheta}^* = \hat{\vartheta}(t, \cZ^*)$ satisfies the monodromy condition 
	\begin{equation}\label{monodromyCondition}
		\hat{\vartheta}^*(t^+) = \theta_{\mathfrak{d}'} \circ \hat{\vartheta}^*(t^-),       
	\end{equation}
	along the ray $\mathfrak{d}'$ of $\bar{\fD}'$. Thus, for $p \in P'_{\bar{\varphi}}$, we have the identity of elements of $R'_k$
	\begin{equation}\label{YJump}
		\hat{\vartheta}^*(t^+)(z^p) = \theta_{\mathfrak{d}'}\left(\hat{\vartheta}^*(t^-)(z^p)\right),
	\end{equation}
	along the ray $\mathfrak{d}'$. In particular, this holds for the functions
	\begin{equation*}
		\hat{\vartheta}^*_q = \hat{\vartheta}^*(t)(z^{(q, \bar{\varphi}(q))}), \, q \in M = \bar{B}_0(\Z). 
	\end{equation*} 
	\begin{rmk}\label{GHKGenMonRmk2} 
		The canonical regular functions $\bar{\vartheta}_q$ on the GHK family give another collection of elements of $R'_k$, associated with the scattering diagram $\bar{\fD}'$, satisfying the same identity \eqref{YJump}. Naturally, it would be interesting to compare the functions $\bar{\vartheta}_q$, $\hat{\vartheta}^*_q$. 
	\end{rmk}
	
	For sufficiently small $\bar{\phi}$, let us fix
	\begin{equation*}
		\tilde{t} \in \C \setminus (\{ e^{i \phi} \bar{\rho}_{i},\,\phi \in (-\bar{\phi},\bar{\phi}) \} \cup \{ e^{i \phi} \bar{\sigma}_{r, r+1},\,\phi \in (-\bar{\phi},\bar{\phi}) \}).
	\end{equation*} 
	Then, Corollary \ref{isomonoCor} implies that 
	\begin{equation*}
		\hat{\vartheta}_q(\cZ)= \hat{\vartheta}(\tilde{t}, \cZ)(z^{(q, \bar{\varphi}(q))}), \, q \in M = \bar{B}_0(\Z),
	\end{equation*}
	is a holomorphic function of $\cZ \in S_1(\bar{\phi})$, with values in $R'_k$, thought of as a finite dimensional complex linear space. Therefore, $\hat{\vartheta}^*_q = \hat{\vartheta}_q(\cZ^*) $ can be computed by analytic continuation along a path starting from $\hat{\vartheta}_q(\cZ_0)$. 
	
	Thus, we choose a path $\gamma\!: [0,1] \to S_1(\bar{\phi})$, with $\gamma(0) = \cZ_0,\,\gamma(1) = \cZ^*$, and such that, for $\sigma \in [0,1]$, the rays $\bar{\rho}_{1j}(\cZ_{\gamma(\sigma)})$, $\bar{\rho}_{2j'}(\cZ_{\gamma(\sigma)})$ are distinct, except for a finite, minimal number of critical times $0 < \sigma_1 <  \ldots < \sigma_{a} < 1$, for which we have 
	\begin{equation*}
		\bar{\rho}_{1j_r}(\cZ_{\gamma(\sigma_r)}) = \bar{\rho}_{2j'_r}(\cZ_{\gamma(\sigma_r)}), 
	\end{equation*}
	precisely for $j_r \in \{1, \ldots, \ell_1\}$, $j'_r \in \{1,\ldots, \ell_2\}$.
	
	Note that by \eqref{propagators}, \eqref{explicitY}, the analytic continuation along $\gamma|_{(0, \sigma_1)}$ is uniquely determined as
	\begin{equation*} 
		\hat{\vartheta}_q(\cZ_{\gamma(\sigma)}) = z^{(q, \bar{\varphi}(q))} \exp\left(\sum_T \bra q, w_T \ket \prod_{i \in T_{0}} (\tau z)^{\alpha_i} \int_{C_T(\cZ_{\gamma(\sigma)})} \rho_T(\tilde{t}, w, \cZ_{\gamma(\sigma)}) \operatorname{W}_T(w) \right). 
	\end{equation*} 
	The problem of analytically continuing the expression above across $\sigma_1$, and, inductively, across the subsequent critical times $\sigma_2, \ldots, \sigma_a$, is studied in detail in \cite{fgs}, Section 5. 
	
	As in Section \ref{JKquivSec}, the meromorphic function $\operatorname{W}_T(w)$, specialised at $w_{0} = 0$, is singular along the hyperplane arrangement defined by $(T, i_0, \mathbb{I})$. Fixing any ordering of $T_0$, compatible with the orientation of $T$ as a rooted tree, identifies this with a meromorphic function nearby $0 \in \C^{|T_0|-1}$, which we denote by $\phi(\operatorname{W}_T(w))$. 
	
	Let us write
	\begin{equation*}
		\beta = \sum_{i \in T_0} \alpha_i 
	\end{equation*}
	for the total decoration of $T$. Then, the discussion in \cite{fgs}, Sections 5.1-5.3 yields a representation for the continuation along $\gamma$ of the \emph{iterated integral} of $ \operatorname{W}_T(w)$, 
	\begin{equation*}
		\bra q, w_T\ket \int_{C_T(\cZ)} \rho_T(\tilde{t}, w,\cZ) \operatorname{W}_T(w),
	\end{equation*}
	which contains a distinguished term given by the \emph{iterated residue} of $ \operatorname{W}_T(w)$,
	\begin{equation}\label{topTerm}
		\eps(T) \operatorname{IR}_{0}(\phi(\operatorname{W}_T(w))) \bra q, w_T\ket \frac{1}{2\pi {\rm i}}\int_{\R_{>0} \cZ^*(\beta)}  \frac{\tilde{t}}{w_0 - \tilde{t}} \exp\left(-\frac{\cZ^*(\beta)}{w_0}\right) \frac{d w_0}{w_0},
	\end{equation}
	for a unique $\eps(T) \in \Z$.  
	\begin{rmk} In particular, although we will not use this in the present paper, the procedure explained in \cite{fgs}, Section 5.2 allows to compute the coefficient $\eps(T)$ in an elementary way, essentially by repeated applications of the residue theorem. 
		
		In Proposition \ref{GHKTreeContribution} below we will give an explicit expression for $\eps(T)$, at least in a special case, in terms of semistable representations.
	\end{rmk}
	Moreover, the term \eqref{topTerm} is characterised by its behaviour as a function of $\tilde{t}$: it is precisely the contribution to the analytic continuation which can be extended to a holomorphic function of $\tilde{t} \in \C^* \setminus \R_{>0} \cZ^*(\beta)$, with a branch cut discontinuity along $\R_{>0} \cZ^*(\beta)$. The jump along the latter ray is given by 
	\begin{equation}\label{topTermJump} 
		\frac{ \bra q, w(\alpha_T) \alpha_T\ket}{|\Aut(T)|} \eps(T) \operatorname{IR}_0(\phi(\operatorname{W}_T(w))) \exp\left(-\frac{\cZ^*(\beta)}{\tilde{t}}\right).
	\end{equation}
	It follows in particular that analytic continuation gives an effective procedure to compute the consistent completion of the scattering diagram $\bar{\fD}'_0$. Let us write
	\begin{equation*}
		\log f'_{\mathfrak{d}'} = \sum_{\beta} c_{\beta} z^{\beta}.
	\end{equation*}
	\begin{prop} The coefficient $c_{\beta}$ is determined by the identities 
		\begin{equation}\label{RHscattering}
			\bra q, \beta\ket c_{\beta} = \sum_{T\,:\,\sum_{i}\alpha_i = \beta} \frac{ \bra q, w(\alpha_T) \alpha_T\ket}{|\Aut(T)|} \eps(T) \operatorname{IR}_0(\phi(\operatorname{W}_T(w))),  
		\end{equation}
		for all $q \in M = M \times \{0\} \subset P'_{\bar{\varphi}}$.
	\end{prop}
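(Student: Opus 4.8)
The plan is to derive \eqref{RHscattering} by comparing the discontinuity of the flat section $\hat{\vartheta}^*_q$ across the ray $\mathfrak{d}'$, computed from the monodromy condition \eqref{monodromyCondition}, \eqref{YJump}, with the explicit description of that same discontinuity coming from the analytic continuation carried out above (following \cite{fgs}, Section 5). Write $\hat{\vartheta}^*_q = z^{(q, \bar{\varphi}(q))}\exp(\Psi_q)$, where $\Psi_q \in R'_k$ lies in the maximal ideal and so has no constant term. From \eqref{YJump} and $\theta_{\mathfrak{d}'}(z^{(m, \bar{\varphi}(m)+p)}) = z^{(m, \bar{\varphi}(m)+p)}(f'_{\mathfrak{d}'})^{\bra m_{\mathfrak{d}'}, m\ket}$ one reads off, along $\mathfrak{d}'$,
\begin{equation*}
\Psi_q(t^+) - \Psi_q(t^-) = \bra m_{\mathfrak{d}'}, q\ket \log f'_{\mathfrak{d}'} + \big(\theta_{\mathfrak{d}'}\Psi_q(t^-) - \Psi_q(t^-)\big).
\end{equation*}
On the other hand, by \eqref{explicitY} and its continuation along $\gamma$, for each tree $T$ the contribution of $T$ to $\Psi_q$ is carried by the monomial $\prod_{i \in T_0}(\tau z)^{\alpha_i}$, i.e.\ by the total decoration $\beta = \sum_{i} \alpha_i$, and equals the analytically continued iterated integral $\bra q, w_T\ket \int_{C_T}\rho_T(\tilde{t}, w)\operatorname{W}_T(w)$, whose distinguished part is \eqref{topTerm} and whose jump across $\R_{>0}\cZ^*(\beta)$ is \eqref{topTermJump}.

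The key step is to project both sides of the displayed identity onto a fixed monomial $z^{\beta}$ (with $\R_{>0}\cZ^*(\beta) \subseteq \mathfrak{d}'$) and onto the exponential sector $\exp(-\cZ^*(\beta)/\tilde{t})$. Here I would invoke the characterisation recalled before the statement: \eqref{topTerm} is exactly the piece of the continuation that extends holomorphically to $\tilde{t} \in \C^* \setminus \R_{>0}\cZ^*(\beta)$, so for a tree of total decoration $\beta$ the non-distinguished terms of its iterated integral carry only exponentials $\exp(-\cZ^*(\gamma)/\tilde{t})$ with $\gamma$ a \emph{proper} sub-sum of the decorations. The same is true of $\theta_{\mathfrak{d}'}\Psi_q(t^-) - \Psi_q(t^-)$, whose $z^{\beta}$-part is a sum of products $z^{\gamma} z^{\gamma'}$ with $\gamma + \gamma' = \beta$, $\gamma \neq 0$, the factor $z^{\gamma}$ coming from $\Psi_q$ and thus again carrying only exponentials $\exp(-\cZ^*(\gamma)/\tilde{t})$, $\gamma \neq \beta$. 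For generic $\cZ^*$ the functions $\exp(-\cZ^*(\gamma)/\tilde{t})$ are linearly independent as $\gamma$ runs over the (finitely many, by the $\mathfrak{m}^k$-truncation) effective classes involved, so projecting onto the $z^{\beta}\exp(-\cZ^*(\beta)/\tilde{t})$ sector annihilates the correction term on the left and all non-distinguished terms on the right, leaving
\begin{equation*}
\bra m_{\mathfrak{d}'}, q\ket\, c_{\beta} = \sum_{T\,:\,\sum_{i}\alpha_i = \beta} \frac{\bra q, w(\alpha_T)\alpha_T\ket}{|\Aut(T)|}\,\eps(T)\operatorname{IR}_0(\phi(\operatorname{W}_T(w))).
\end{equation*}
Since $m_{\beta}$ generates $\mathfrak{d}'$ up to a positive scalar, $\bra m_{\mathfrak{d}'}, q\ket$ is proportional to $\bra q, \beta\ket$ (with the proportionality fixed by the conventions in \eqref{monodromyCondition}), which gives \eqref{RHscattering}; as this holds for all $q \in M$ and $\bra q, \beta\ket \neq 0$ for suitable $q$ (because $m_{\beta} \neq 0$), it determines $c_{\beta}$.

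Essentially all of the analytic input — the decomposition of the continuation into distinguished plus ``lower-order'' terms, and the jump formula \eqref{topTermJump}, obtained by iterating \cite{fgs}, Sections 5.1--5.3 over the critical times $\sigma_1 < \cdots < \sigma_{a}$ of $\gamma$ — is already available, so the remaining work is the sector bookkeeping above. The step I expect to be the main obstacle is precisely the isolation of the ``direct'' jump $\bra m_{\mathfrak{d}'}, q\ket \log f'_{\mathfrak{d}'}$: one must check that no non-distinguished term of a tree of total decoration $\beta$, and no term of $\theta_{\mathfrak{d}'}\Psi_q(t^-) - \Psi_q(t^-)$, contributes to the $\exp(-\cZ^*(\beta)/\tilde{t})$ sector. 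This reduces to a genericity statement for $\cZ^*$ — injectivity of $\cZ^*$ on effective classes up to the truncation order, so that all the relevant rays of $\bar{\fD}'$ and all the sectors $\exp(-\cZ^*(\gamma)/\tilde{t})$ are distinct — supplemented, if desired, by an induction on $|\beta| \in \N^2$ to control the lower-order coefficients $c_{\gamma}$.
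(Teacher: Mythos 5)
Your overall strategy — comparing the discontinuity of $\hat{\vartheta}^*_q$ computed in two ways, from the monodromy relation \eqref{YJump} and from the analytic-continuation formula \eqref{topTerm}, \eqref{topTermJump}, and then extracting the $z^{\beta}$-coefficient — is precisely what the paper's one-line proof (``Evaluate \eqref{YJump} on any monomial $z^{(q,\bar{\varphi}(q))}$'') is gesturing at, and you correctly identify the distinguished term \eqref{topTerm} as the object whose jump carries the tree contributions. To that extent the approach is the same as the paper's, and you fill in a good deal of bookkeeping that the paper leaves implicit.

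There is, however, a concrete gap in the ``sector projection'' step as you have written it. You project the jump identity
\begin{equation*}
\Psi_q(t^+) - \Psi_q(t^-) = \bra m_{\mathfrak{d}'}, q\ket \log f'_{\mathfrak{d}'} + \big(\theta_{\mathfrak{d}'}\Psi_q(t^-) - \Psi_q(t^-)\big)
\end{equation*}
onto the sector $z^{\beta}\exp(-\cZ^*(\beta)/\tilde{t})$ and claim this leaves $\bra m_{\mathfrak{d}'}, q\ket c_{\beta}$ on the monodromy side. But $\log f'_{\mathfrak{d}'} = \sum_{\beta} c_{\beta}z^{\beta}$ is manifestly $\tilde{t}$-independent; it lives in the trivial exponential sector $\exp(0\cdot\tilde{t}^{-1})$, not in $\exp(-\cZ^*(\beta)/\tilde{t})$. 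If one really restricts to the coefficient of $\exp(-\cZ^*(\beta)/\tilde{t})$ (which is exponentially small as $\tilde{t}\to 0$) and discards everything else by linear independence of exponentials, the $c_{\beta}$ term is discarded too, and the display degenerates to $0 = \sum_T\cdots$, which is not what you want. The factor $\exp(-\cZ^*(\beta)/\tilde{t})$ in \eqref{topTermJump} must instead be matched by exponentials that arise on the monodromy side from the \emph{essential singularity of the flat section itself}: $\hat{\vartheta}^*(t^-)$ carries factors $\exp(-\cZ^*(\gamma)/\tilde{t})$ reflecting the second-order pole of $\nabla(\cZ^*)$ at $t=0$, and it is the interaction of the automorphism $\theta_{\mathfrak{d}'}$ with this essential-singularity structure (equivalently, the conjugation of the constant Stokes factor by $\exp(\pm\cZ^*/t)$) that places the $c_{\beta}$-contribution in the $\exp(-\cZ^*(\beta)/\tilde{t})$ sector. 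This is exactly the step you single out as ``the main obstacle,'' so you are aware something is needed there, but the linear-independence-of-exponentials argument you propose kills the term you want to keep rather than the terms you want to discard. A correct treatment has to spell out the asymptotic form of $\hat{\vartheta}^*(t^{\pm})(z^p)$ at $t\to 0$ (in the sense of \cite{fgs}, Sections 4--5) before extracting coefficients; the rest of your argument (the sub-sum/genericity considerations for the correction term, the reduction of $\bra m_{\mathfrak{d}'},q\ket$ to $\bra q, \beta\ket$ up to the sign and the scalar $k_{\beta}$) then goes through.
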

	\begin{proof} Evaluate \eqref{YJump} on any monomial $z^{(q, \bar{\varphi}(q))}$.
	\end{proof}

	\section{Application of generalised monodromy}\label{GenMonSec2} 
	Recall that, according to \cite{ghk}, Section 3, and by our discussion in Section \ref{GenMonSec}, the mirror family to the log Calabi-Yau surface underlying the Looijenga pair with toric model $(Y', D') \to (\bar{Y}, \bar{D})$ is constructed from the consistent completion $\bar{\fD}' = \{(\mathfrak{d}', f'_{\mathfrak{d}'})\}$ of the initial scattering diagram
	\begin{equation*}
		\bar{\fD}'_0 = \{\R_{\geq 0} m_i, \prod^{\ell_i}_{j=1}(1 + \tau_i z^{(m_i, \bar{\varphi}(m_i) + [E_{i0} - E_{ij}])})\} \subset \bar{B} = \R^2, 
	\end{equation*}
	upon setting $\tau_i = z^{(0, - [E_{i0}])}$. The consistent completion can be computed in terms of the (pushforward of the) canonical scattering diagram $(\mathfrak{D}')^{\rm can} \subset B$: 
	\begin{equation*}
		\operatorname{Scatter}(\bar{\fD}'_0) = \nu_*((\mathfrak{D}')^{\rm can}) 
	\end{equation*}
	(see \cite{ghk}, Proposition 3.26). Namely, we have
	\begin{equation}\label{canonicalDiag}
		\log f_{\mathfrak{d}'} = \nu\left( \sum_{\beta} k_{\beta} N_{\beta} (\tau z)^{\pi_{*}\beta - \varphi_{\tau_{\mathfrak{d}'}}(k_{\beta}m_{\mathfrak{d}'})}\right). 
	\end{equation}
	Here, 
	\begin{equation}\label{relGWDef}
		N_{\beta} = \int_{[\overline{\mathfrak{M}}((\tilde{Y}')^{o}/C^o,\beta)]^{\rm vir}}1
	\end{equation}
	is a relative genus $0$ Gromov-Witten invariant computed on the toric blowup $\pi\!:\tilde{Y}' \to Y'$ corresponding to the ray $\mathfrak{d}'$, with respect to the unique degree $\beta$ such that 
	\begin{equation*}
		\beta \cdot \tilde{D}'_i = 
		\begin{cases}
			k_{\beta} \quad \tilde{D}'_i = C\\
			0 \,\,\,\quad \tilde{D}'_i \neq C, 
		\end{cases}
	\end{equation*}
	for the component $C \subset \tilde{D}'$ which corresponds to the ray $\mathfrak{d}'$ (see \cite{ghk}, Section 1.3 for such toric blowups).
	
	Let us now specialise to the case $n = 3$, $\ell_3 = -1$. Recall in this case the toric base $(\bar{Y}, \bar{\Sigma})$ is given by $\PP^2$ with its canonical fan spanned by $\{m_1, m_2, m_3\}$, and there are no blowups along the toric divisor $\bar{D}_3$. Then, the degree $\beta = \beta_{(P_1, P_2)}$ is determined by a pair of ordered partitions of lengths $\ell_1, \ell_2$,
	\begin{equation*}
		(P_1, P_2) = \big(\sum^{\ell_1}_{i=1} p_{1i}, \sum^{\ell_2}_{j=1} p_{2j}\big),
	\end{equation*}
	satisfying $(|P_1|, |P_2|) = k (a, b)$ for $k > 0$ and primitive $(a, b) \in \Z^2$, through the correspondence
	\begin{equation*}
		H_2(\tilde{Y}', \Z) \ni\beta_{(P_1, P_2)} = \pi^*\beta_k - \sum^{\ell_1}_{i = 1} p_{1i} [E_{1i}] - \sum^{\ell_2}_{j=1} p_{2j} [E_{2j}].
	\end{equation*}
	Here, $\beta_k$ is the pullback to $Y'$ of the unique class on the toric orbifold with fan $\R_{\geq 0}\{m_1, m_2, m_3, m_4 := (a, b)\}$ satisfying
	\begin{equation*}
		\beta_k \cdot \bar{D}_1 = ka,\,\beta_k \cdot \bar{D}_2 = kb,\, \beta_k \cdot \bar{D}_4 = k. 
	\end{equation*}
	The equality $(|P_1|, |P_2|) = k (a, b)$ then implies
	\begin{equation*}
		\beta \cdot \tilde{D}'_1 = 0,\,\beta \cdot \tilde{D}'_2 = 0,\,\beta \cdot \tilde{D}'_3 = 0,\,\beta \cdot \tilde{D}'_4 = k. 
	\end{equation*}
	We will write 
	\begin{equation*}
		N[(P_1, P_2)] = N_{\beta_{(P_1, P_2)}}
	\end{equation*}
	for the corresponding Gromov-Witten invariant \eqref{relGWDef}, with $C = \tilde{D}'_4$. Thus, in this case, the weight function $f'_{\mathfrak{d}'}$ appearing in \eqref{canonicalDiag} can be identified with the formal power series 
	\begin{equation*}
		f'_{(a, b)} \in \C[x,x^{-1}, y, y^{-1}][\![s_1, \ldots, s_{\ell_1}, t_1, \ldots, t_{\ell_2}]\!]
	\end{equation*}
	given by
	\begin{equation}\label{tropVertex}
		\log f'_{(a, b)} = \sum_{k>0} \sum_{(|P_1|, |P_2|) = k (a, b)} k N[(P_1, P_2)] (s, t)^{(P_a, P_b)} (\tau x)^{ka} (\tau y)^{kb}, 
	\end{equation}
	where 
	\begin{equation*}
		(s, t)^{(P_a, P_b)} = \prod^{\ell_1}_{i=1} s^{p_{1i}}_i \prod^{\ell_2}_{j=1} t^{p_{2j}}_j,
	\end{equation*}
	with $s_i$, $t_j$ given by
	\begin{align*}
		&s_i = z^{(0, [E_{10} - E_{1i}])} = \exp\big(2\pi {\rm i} \int_{\tilde{\beta}_{1i}} \Omega\big),\,t_j =  z^{(0, [E_{20} - E_{2j}])} = \exp\big(2\pi {\rm i} \int_{\tilde{\beta}_{2j}} \Omega\big),
	\end{align*}
	and with
	\begin{equation*}
		(\tau x)^{ka} = z^{(k a m_1, - k a [E_{10}])},\,(\tau y)^{kb} = z^{(k b m_2, - k b [E_{20}])}. 
	\end{equation*}
	\begin{rmk} Note that the identity \eqref{tropVertex} is precisely of the type considered in the tropical vertex formalism of \cite{gps}, but here the parameters $s_i$, $t_j$ appearing in \cite{gps} are in fact given by periods of the mirror family of the log Calabi-Yau surface $U' = Y'\setminus D'$.
	\end{rmk}
	Thus, the function $\log f'_{(a, b)}$ can be identified canonically with a corresponding sum over dimension vectors $d$ for $K(\ell_1, \ell_2)$, namely 
	\begin{equation*}
		\log \tilde{f}'_{(a, b)} = \sum_{k > 0} \sum_{|d| = k (a, b)} k c_d z^{d},
	\end{equation*}
	where
	\begin{align}\label{MainProof(i)}
		\nonumber & d = (P_1, P_2),\, c_d = N[(P_1, P_2)],\\
		\nonumber & z^d = (s, t)^{(P_a, P_b)} = z^{(0, \sum^{\ell_1}_{i = 1} p_{1i} [E_{10}-E_{1i}] + \sum^{\ell_2}_{j=1} p_{2j} [E_{20}-E_{2j}])}\\
		& =\exp\big(2\pi {\rm i} \int_{\tilde{\beta}(d)} \Omega\big),
	\end{align}
	for the class 
	\begin{equation*}
		\beta(d) = \sum^{\ell_1}_{i = 1} p_{1i} [E_{10}-E_{1i}] + \sum^{\ell_2}_{j=1} p_{2j} [E_{20}-E_{2j}]) \in\mathcal{Q}.
	\end{equation*}
	This identification establishes Theorem \ref{MainThm} $(i)$ as well as the identity \eqref{MainThmGW} in Theorem \ref{MainThm} $(ii)$.
	
	Next we will describe a procedure which refines the correspondence between the consistent completion of $\bar{\fD}'_0$ and generalised monodromy explained in Section \ref{GenMonSec}. This refinement is needed in order to match abelianisation for quiver invariants.  
	
	The first step is given by the degeneration formula in Gromov-Witten theory, applied to the invariants $N[(P_1, P_2)]$. It is shown in \cite{rsw}, Section 4 that this can be written in the form
	\begin{equation}\label{GWdegeneration}
		N[(P_1, P_2)] = \sum_{(k^1, k^2) \vdash (P_1, P_2)} \prod^2_{i=1} \prod^{\ell_i}_{j=1} \prod_w \frac{(-1)^{k^i_{w,j}(w-1)}}{k^i_{w,j}!w^{k^i_{w,j}}} N^{\rm rel}[(w(k^1), w(k^2))],
	\end{equation}
	where $(w(k^1), w(k^2))$ is a pair of weight vectors (i.e., a pair of sequences of increasing, positive integers), of lengths $\sum_w m_w(k^i)$ for $i =1, 2$, determined by
	\begin{equation*}
		(w(k^i))_j = w, \textrm{ for } j = \sum^{w-1}_{r} m_{r=1}(k^i) +1,\ldots,\sum^{w}_{r=1} m_r(k^i).  
	\end{equation*}
	The weight vectors $(w(k^1), w(k^2))$ encode the orders of tangency of a rational curve in $\bar{Y}$ at specified points of $\bar{D}_1$, $\bar{D}_2$, contained in the smooth locus of $\bar{D}$, and $N^{\rm rel}[(w(k^1), w(k^2))]$ denotes the relative Gromov-Witten invariant virtually enumerating such curves (see \cite{gps}, Section 5 for more details on such relative invariants and on the degeneration formula).
	
	The main result of \cite{rsw}, Section 4 proves an identity between relative Gromov-Witten invariants and quiver invariants, 
	\begin{equation}\label{DTGWrel}
		\bar{\chi}_{\cN}(d(k^1,k^2), \widehat{\zeta}^*) = (-1)^{(P_1, P_2)} N^{\rm rel}[(w(k^1), w(k^2))]\prod^2_{i=1} \prod^{\ell_i}_{j=1} \prod_w w^{k^i_{w,j}},
	\end{equation}
	where the stability vector is given by 
	\begin{equation*}
		\widehat{\zeta}^*(i_{(w, m)}) = w,\,\widehat{\zeta}^*(j_{(w, m)}) = 0,
	\end{equation*}
	and the sign by
	\begin{equation*}
		(-1)^{(P_1, P_2)} = (-1)^{k a b - \sum_i (p_{1 i})^2 - \sum_j (p_{2 j})^2-1}.
	\end{equation*}
	Recall that we also have a ``refined GW/Kronecker" correspondence (see \cite{gp}, \cite{rw}, Section 9 and \cite{bousseauThesis})
	\begin{equation}\label{GWKron}
		N[(P_1, P_2)] = (-1)^{(P_1, P_2)}\bar{\chi}_{K(\ell_1, \ell_2)}((P_1, P_2), \zeta^*),
	\end{equation}
	where 
	\begin{equation*}
		\zeta^*(i) = 1,\, \zeta^*(j) = 0,
	\end{equation*}
	for all sources $i$ (respectively, sinks $j$) in $K(\ell_1, \ell_2)_0$. Thus, \eqref{DTGWrel} shows, in particular, that the abelianisation identity for quiver invariants \eqref{mpsBipartite} corresponds precisely to the Gromov-Witten degeneration formula \eqref{GWdegeneration}.
	\begin{rmk} Again, the identity \eqref{DTGWrel} was only established in \cite{rsw} in the case when $(|P_1|, |P_2|)$ is primitive (where $(k^1,k^2) \vdash (P_1, P_2)$), but the same proof works in general, given the developments of Donaldson-Thomas theory for quivers \cite{sven}. 
	\end{rmk}
	The crucial point for our purposes is that combining \eqref{GWdegeneration} with \eqref{DTGWrel} yields the identity
	\begin{equation}\label{GWBipartiteCorr} 
		N[(P_1, P_2)] = (-1)^{(P_1, P_2)}\sum_{(k^1, k^2) \vdash (P_1, P_2)} \prod^2_{i=1} \prod^{\ell_i}_{j=1} \prod_w \frac{(-1)^{k^i_{w,j}(w-1)}}{k^i_{w,j}!w^{2k^i_{w,j}}} \bar{\chi}_{\cN}(d(k^1,k^2), \widehat{\zeta}^*).
	\end{equation}
	(Equivalently, this can be obtained by combining \eqref{mpsBipartite}, \eqref{GWKron}). In turn, the invariant $(-1)^{(P_1, P_2)}\bar{\chi}_{\cN}(d(k^1,k^2), \widehat{\zeta}^*)$ can be computed in terms of generalised monodromy. 
	
	As in \cite{rsw}, Section 4, we consider the Poisson algebra attached to the quiver $Q \subset \cN$ spanned by the dimension vector $d(k^1, k^2)$. This is the ring 
	\begin{equation*}
		R_Q = \C[\![x_{j_{(w', m')}}, y_{i_{(w, m)}} : i_{(w, m)}, j_{(w', m')} \in Q_0]\!],
	\end{equation*}
	endowed with the Poisson bracket defined by
	\begin{equation*}
		\{x_{j_{(w', m')}}, y_{i_{(w, m)}}\} = \bra j_{(w', m')}, i_{(w, m)} \ket x_{j_{(w', m')}} y_{i_{(w, m)}} = w w' x_{j_{(w', m')}} y_{i_{(w, m)}}
	\end{equation*}
	(while all the other brackets of generators vanish). We introduce the (Poisson) automorphisms 
	\begin{equation*}
		\{\theta_{i_{(w', m')}},\,\theta_{j_{(w', m')}} :  i_{(w, m)}, j_{(w', m')} \in Q_0\} \subset \Aut(R_Q),
	\end{equation*}
	acting by
	\begin{align*}
		&\theta_{j_{(w', m')}}(x_{j_{(w, m)}}) = x_{j_{(w, m)}},\\
		&\theta_{j_{(w', m')}}(y_{i_{(w, m)}}) = y_{i_{(w, m)}}(1+ x_{j_{(w', m')}})^{\bra j_{(w', m')},\,i_{(w, m)} \ket},
	\end{align*}
	respectively
	\begin{align*}
		&\theta_{i_{(w', m')}}(x_{j_{(w, m)}}) = x_{j_{(w, m)}} (1+ y_{i_{(w', m')}})^{\bra i_{(w', m')},\,j_{(w, m)} \ket},\\
		&\theta_{i_{(w', m')}}(y_{i_{(w, m)}}) = y_{i_{(w, m)}}.
	\end{align*}
	Consider the stability vectors $\zeta_0$, $\zeta^*$ for $K(\ell_1, \ell_2)$ given by
	\begin{equation*}
		\zeta_0(i) = 0,\, \zeta_0(j) = 1;\,\zeta^*(i) = 1,\, \zeta^*(j) = 0.  
	\end{equation*}  
	We upgrade these to central charges 
	\begin{equation*}
		\cZ_0,\cZ^* \in \Hom(\Z K(\ell_1, \ell_2)_0, \C),    
	\end{equation*}  
	inducing the same stability conditions. Note that stability vectors $\zeta$ and central charges $\cZ$ have canonical lifts $\widehat{\zeta}$, $\widehat{\cZ}$ to $Q$, given by
	\begin{align*}
		&\widehat{\zeta}(i_{(w, m)}) = w \zeta(i),\,\widehat{\zeta}(j_{(w, m)}) = w \zeta(j),\\
		&\widehat{\cZ}(i_{(w, m)}) = w \cZ(i),\,\widehat{\cZ}(j_{(w, m)}) = w \cZ(j).
	\end{align*}
	In particular, we have
	\begin{align*}
		\widehat{\cZ}(d(k^1, k^2)) &= \sum_{i_{(w, m)} \in Q_0} \widehat{\cZ}(i_{(w, m)}) + \sum_{j_{(w', m')} \in Q_0} \widehat{\cZ}(j_{(w', m')})\\
		&= \sum_{i_{(w, m)} \in Q_0} w \cZ (i) + \sum_{j_{(w', m')} \in Q_0} w \cZ (j) =  \cZ(\beta),
	\end{align*} 
	where $\beta = \sum^2_{i =1} \sum^{\ell_i}_{j=1}p_{ij} \beta_{i j}$.   
	
	Fix a path $\gamma\!:[0, 1] \to \Hom(\Z K(\ell_1, \ell_2)_0, \C)$ with $\cZ_{\gamma(0)} = \cZ_0$, $\cZ_{\gamma(1)} = \cZ^*$. By the results of \cite{fgs}, explained in Section \ref{GenMonSec}, there exists a unique $\Aut(R_Q)$-connection $\nabla_0$, of the form 
	\begin{equation*}
		\nabla_0 = \frac{\widehat{\cZ}_0}{t^2} + \frac{f}{t},
	\end{equation*}
	with generalised monodromy given by the pairs of rays and automorphisms
	\begin{equation*}
		\{\R_{>0} \cZ_0(i), \prod_{i_{(w, m)}\in Q_0} \theta_{i_{(w,m)}}\}, \{\R_{>0} \cZ_0(j), \prod_{j_{(w, m)}\in Q_0} \theta_{j_{(w,m)}}\}.
	\end{equation*}
	Moreover, there is a holomorphic family of connections $\nabla(\cZ)$, parametrised by $\cZ$ in an open neighbourhood of $\gamma([0, 1])$, of the form
	\begin{equation*}
		\nabla(\cZ) = \frac{\widehat{\cZ}}{t^2} + \frac{f(\cZ)}{t},
	\end{equation*}
	with constant generalised monodromy, and with initial value $\nabla(\cZ_{\gamma(0)}) = \nabla_0$. So the canonical $\Aut(R_Q)$-valued flat section $\hat{\vartheta}$ of $\nabla(\cZ_0)$ can be continued analytically along $\gamma$ to a flat section $\hat{\vartheta}^*$ of $\nabla(\cZ^*)$. The section $\hat{\vartheta}$ is given explicitly by \eqref{explicitY}, where now $z^p$ denotes an element of $\C[\Z Q_0]$ (so we have e.g. $z^{j_{(w, m)}} = x_{j_{(w,m)}}$), and $T$ is a rooted tree labelled by elements $\alpha$ of the set
	\begin{equation*}
		\LL = \Z_{>0}\{ i_{(w, m)}, j_{(w', m')} \} \subset \Z_{>0}Q_0.
	\end{equation*}  
	In particular, when continuing $\hat{\vartheta}$ along $\gamma$, we need to consider the analytic continuation, up to a neighbourhood of $\gamma(1) = \cZ^*$, of the graded component appearing in \eqref{explicitY},   
	\begin{equation}\label{ExplicitYabelian}
		\sum_{T\,:\,\sum_{i \in T_0} \alpha_i = d(k^1, k^2)} \bra -, w_T \ket \int_{C_T(\widehat{\cZ}_0)} \rho_T(t, w, \widehat{\cZ}_0) \operatorname{W}_T(w) \prod_{i \in T_{0}} z^{\alpha_i}.
	\end{equation}
	The latter continuation is piecewise holomorphic in $t$, with branch cuts along a finite collection of rays $\rho \subset \C^*$, and the jump along the ray $\R_{>0} \widehat{\cZ}^*(d(k^1, k^2))$ is given by 
	\begin{align}\label{linearForms}
		\sum_{\sum_{i \in T_0} \alpha_i = d(k^1, k^2)}   \bra -, \alpha_T\ket  \eps(T) \operatorname{IR}_0(\phi(\operatorname{W}_T(w))) \exp\left(-\frac{\widehat{\cZ}^*(d(k^1, k^2))}{t}\right) z^{d(k^1,k^2)},
	\end{align}
	for a unique $\eps(T) \in \Z$, determined inductively by the procedure of \cite{fgs}, Section 5.2. We will use an alternative expression for $\eps(T)$, given in Proposition \ref{GHKTreeContribution} below.
	
	On the other hand, by the wall-crossing theory for the generalised quiver invariant $\bar{\chi}$ (see \cite{ks, sven}), and the constant monodromy property, the monodromy of $\nabla(\cZ^*)$ is given by the collection of rays and automorphisms
	\begin{equation*}
		\big\{\rho, \theta_{\rho}:= \exp_{\operatorname{Der}(R_Q)}\big\{-, \sum_{d \in \N Q_0 : \widehat{\cZ}(d) \in \rho} (-1)^d \bar{\chi}_Q(d, \widehat{\zeta}^*) z^d\big\}\big\}, 
	\end{equation*} 
	for suitable signs $(-1)^d$, determined by $(-1)^{d(k^1, k^2)} = (-1)^{(P_1, P_2)}$. As a section of $\nabla(\cZ^*)$, $\hat{\vartheta}^*$ satisfies
	\begin{equation*} 
		\hat{\vartheta}^*(t^+) = \theta_{\rho} \circ \hat{\vartheta}^*(t^-),       
	\end{equation*}
	along the ray $\rho$. Applying this to the ray $\R_{>0} \widehat{\cZ}^*(d(k^1, k^2))$, we see that there is an alternative expression for \eqref{linearForms}, given by 
	\begin{align}\label{linearFormsChi}
		\bra - , (-1)^{(P_1, P_2)} \bar{\chi}_{\cN}(d(k^1,k^2) \widehat{\zeta}^*) d(k^1,k^2) \exp\left(-\frac{\widehat{\cZ}(d(k^1, k^2))}{t}\right)\ket z^{d(k^1, k^2)}.  
	\end{align}
	Comparing \eqref{linearForms} and \eqref{linearFormsChi} yields the identity 
	\begin{align}\label{linearForms2}
		(-1)^{(P_1, P_2)}\bar{\chi}_{\cN}(d(k^1,k^2), \widehat{\zeta}^*) = \sum_{\substack{\alpha_T = i_0 \\ \sum_{i \in T_0} \alpha_i = d(k^1, k^2)}}  \eps(T) \operatorname{IR}_0(\phi(\operatorname{W}_T(w))).
	\end{align}
	
	Note that a tree $T$ appearing in \eqref{linearForms} (or \eqref{linearForms2}) can be identified canonically with a spanning tree of the reduced quiver $\bar{Q}$ (respectively, a spanning tree of $\bar{Q}$ with fixed root). 
	\begin{prop}\label{GHKTreeContribution} We have
		\begin{equation*}
			\eps(T) = \bar{\chi}_T(d(k^1, k^2), \widehat{\zeta}^*).
		\end{equation*}
	\end{prop}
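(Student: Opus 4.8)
The plan is to upgrade the identity \eqref{linearForms2} --- which only pins down the sum $\sum_{T}\eps(T)\operatorname{IR}_0(\phi(\opW_T(w)))$ over rooted spanning trees of $\bar{Q}$ --- into the termwise statement $\eps(T)=\bar{\chi}_T(d(k^1,k^2),\widehat{\zeta}^*)$. Two ingredients should make this possible. First, $\eps(T)$ depends only on the abstract rooted tree $T$ together with its edge multiplicities $\bra h(e),t(e)\ket$, and not on the ambient quiver $\cN$: this is visible from the construction of $\eps(T)$ in \cite{fgs}, Section~5.2, whose inductive residue-theorem procedure acts only on the iterated integral of $\opW_T$ along $C_T$, all of which is intrinsic to $T$. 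Second, both $\eps(T)$ and $\bar{\chi}_T(d(k^1,k^2),\widehat{\zeta}^*)$ obey a matching recursion over rooted subtrees, so that, agreeing on the base case, they agree in general. I would therefore argue by induction on $|T_0|$.

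On the $\bar{\chi}$ side: a $\widehat{\zeta}^*$-semistable representation of dimension $d(k^1,k^2)$ has all its Jordan--Hölder factors equal to simples (for dimension reasons), and the Harder--Narasimhan/wall-crossing recursion for the generalised invariant $\bar{\chi}$ (\cite{ks, sven}), run along the directed stability $\widehat{\zeta}^*$, writes $\bar{\chi}_{\cN}(d(k^1,k^2),\widehat{\zeta}^*)$ as a sum of products of such invariants for smaller dimension vectors, organised by how the support decomposes; restricting to the subquiver carried by a fixed spanning tree $T$ and peeling off a leaf of $T$ yields a recursion for $\bar{\chi}_T$. On the $\eps$ side: the procedure of \cite{fgs}, Section~5.2, reduces the iterated integral of $\opW_T$ to $\operatorname{IR}_0(\phi(\opW_T))$ with an integer coefficient by successively colliding the rays $\R_{>0}\cZ(\alpha_i)$; grouping these collisions by which subtree is resolved last produces the same recursion --- this is exactly the wall-crossing/analytic-continuation dictionary already used to pass from \eqref{linearForms} and \eqref{linearFormsChi} to \eqref{linearForms2}, now tracked tree by tree. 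The base case $|T_0|=2$ is a single edge of multiplicity $m$, where nothing must be collided, $\eps(T)$ is read off directly, and it is matched against $\bar{\chi}_T$ of the corresponding two-vertex quiver with stability $\widehat{\zeta}^*$; here the bipartite input of \cite{rsw}, Section~4 (extended via \cite{sven}) enters through the relevant relative Gromov--Witten/quiver comparison, and explains the restriction to this special case.

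The main obstacle will be the precise bookkeeping of signs and normalisations when matching the two recursions: the global sign $(-1)^{(P_1,P_2)}$, the orientation signs $\nu(\mathfrak{F})$ implicit in $\operatorname{IR}_0\circ\phi$ (via the chosen ordering of $T_0$ and Proposition~\ref{ResidueAndChangeOfVariables}), and the signs $(-1)^d$ in the wall-crossing automorphisms $\theta_\rho$ must be shown to combine so that the $\operatorname{IR}_0(\phi(\opW_T))$ factors on the two sides cancel and leave $\eps(T)=\bar{\chi}_T$ with no residual multiplicities or signs. A related subtlety is that $\widehat{\zeta}^*$ lies on walls of the chamber structure of $\cN$, so $\bar{\chi}_{\cN}$ and $\bar{\chi}_T$ must be read as generalised (Joyce--Song, or motivic) invariants throughout, and one should confirm that this is compatible with the regularity assumption on the lift $\widehat{\zeta}^*$ needed on the JK side. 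A secondary obstacle is verifying the ambient-independence of $\eps(T)$ rigorously, rather than heuristically, from the algorithm of \cite{fgs}, Section~5.2.
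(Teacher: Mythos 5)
Your first ingredient --- that $\eps(T)$ is intrinsic to the abstract rooted tree $T$ and its edge multiplicities, independently of the ambient quiver $\cN$ --- is exactly the right observation, but it already makes the induction you propose unnecessary. The paper's proof is a one-line trick: rerun the entire analysis of Section~\ref{GenMonSec2} (the \cite{fgs} construction of $\nabla_0$, analytic continuation, and the comparison of \eqref{linearForms} with \eqref{linearFormsChi} leading to \eqref{linearForms2}) with $T$ itself, thought of as an abstract quiver with abelian dimension vector, in place of $\cN$. Since $T$ is a tree, $\bar{T}$ has a unique spanning tree with root $i_0$, namely itself, so the analogue of the right-hand side of \eqref{linearForms2} collapses to a single summand, and the comparison of jumps reads off $\eps(T)=\bar{\chi}_T(d(k^1,k^2),\widehat{\zeta}^*)$ directly. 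No recursion, no leaf-peeling, and no extra sign bookkeeping beyond what was already done to prove \eqref{linearForms2} is needed.

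By contrast, the inductive replacement you sketch is left at the level of assertion: neither the leaf-peeling recursion for $\bar{\chi}_T$ along $\widehat{\zeta}^*$ nor the ``group collisions by which subtree is resolved last'' recursion for $\eps(T)$ is actually established, and the sign and normalisation matching you flag as your ``main obstacle'' is precisely where such an argument tends to break without a substantial amount of additional work. Your discussion of the base case also conflates two separate issues: the restriction to complete bipartite quivers is a global limitation of the paper (inherited from \cite{rsw, rw}), not something that enters via the $|T_0|=2$ case, and the relative GW/quiver comparison of \cite{rsw} plays no role in this proposition. So, while you correctly identify the crucial ambient-independence of $\eps(T)$, you miss the shortcut it affords, and the inductive scheme as written has genuine gaps that the paper's direct argument avoids.
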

	\begin{proof} This characterisation of $\eps(T)$ can be proved by the trick of applying the results of \cite{fgs} to the tree $T$ itself, thought of as an abstract quiver. 
	\end{proof}
	Comparing \eqref{GWBipartiteCorr} and \eqref{linearForms2} then yields the identities for the scattering diagram coefficients
	\begin{align*}
		& c_d = N[(P_1, P_2)]\\
		& =  \sum_{(k^1, k^2) \vdash (P_1, P_2)} \prod^2_{i=1} \prod^{\ell_i}_{j=1} \prod_w \frac{(-1)^{k^i_{w,j}(w-1)}}{k^i_{w,j}!w^{2k^i_{w,j}}} \sum_{\substack{\alpha_T = i_0 \\ \sum_{i \in T_0} \alpha_i = d(k^1, k^2)}}  \eps(T) \operatorname{IR}_0(\phi(\operatorname{W}_T(w))),
	\end{align*}
	with 
	\begin{equation*}
		\eps(T) = \bar{\chi}_T(d(k^1, k^2), \widehat{\zeta}^*).
	\end{equation*}
	Thus, in view of the expression for abelianised JK invariants \eqref{JKmpsBipartite}, our main claim, the identity \eqref{MainThmJK} in Theorem \ref{MainThm} $(ii)$, follows if we can prove the identity
	\begin{align}\label{JKresW}
		\jk(\opZ_{\cN}(d(k^1,k^2)), \widehat{\zeta}^*) = \sum_{\substack{\alpha_T = i_0 \\ \sum_{i \in T_0} \alpha_i = d(k^1, k^2)}}  \eps(T) \operatorname{IR}_0(\phi(\operatorname{W}_T(w))). 
	\end{align} 
	
	In the next Section we will show how this identity \eqref{JKresW} (and so Theorem \ref{MainThm} $(ii)$), follow from the iterated residue expansion \eqref{JKIntro}. 	 
	
	\section{Completion of the proofs}\label{CompletionSec}
	 
	\begin{proof}[Proof of Theorem \ref{MainThm}] The claim $(i)$ and the identity \eqref{MainThmGW} in $(ii)$ were already established in Section \ref{GenMonSec2}, see in particular \eqref{MainProof(i)}.
		
		It remains to prove \eqref{MainThmJK} in $(ii)$ or, equivalently, as we showed in Section \ref{GenMonSec2}, the identity \eqref{JKresW}.
		
		Consider the bipartite quiver $Q \subset \cN$ spanned by the dimension vector $d(k^1, k^2)$. 
		
		Recall we have shown that for a quiver $Q$, without loops or oriented cycles, not necessarily bipartite, and for general stability vectors $\zeta$, the Jeffrey-Kirwan residue $\jk(\opZ_Q(\mathbb{I}), \zeta)$ with respect to the full abelian dimension vector 
	\begin{equation*}
		\mathbb{I} = \sum_{i \in Q_0} i \in \Z Q_0,
	\end{equation*}
	for generic, fixed $R$-charges $\bar{R} = \bar{R}_{ij}$, $\{i \to j\} \in Q_1$, can be computed as a sum of contributions, one for each spanning tree of $\bar{Q}$, with fixed root. (Recall $\jk(\opZ_Q(\mathbb{I}), \zeta)$ is well defined when $\zeta$ is regular with respect to $\opZ_Q(\mathbb{I})$). It turned out that spanning trees correspond to singular points $x_T$ of the affine hyperplane arrangement defined by $(Q, i_{Q}, \mathbb{I}, \bar{R})$. For each such singular point, there is a specific identification $\phi_T$ of $\opZ_{Q', \bar{R}}(\mathbb{I})(u)$ near $x_T$ with a meromorphic function in a neighbourhood of $0 \in \C^{|Q'_0|-1}$. The contribution of a spanning tree $T \subset \bar{Q}$ is given by  
	\begin{equation*}
		\tilde{\eps}(T) \operatorname{IR}_{0} (  \phi_{x_T}(\opZ_{Q, \bar{R}}(\mathbb{I})(u))), 
	\end{equation*}
	where the weight $\tilde{\eps}(T)$ is determined by the JK residue operation. We proved that 	
	\begin{equation*}
		\tilde{\eps}(T) = \chi(\mathcal{M}^{\zeta-st}_{\mathbb{I}}(T)) = \begin{cases}
			1 & \text{if } T \text{ is stable},\\
			0 & \text{otherwise},
		\end{cases}
	\end{equation*}	
	and moreover, for abelian representations of a tree, the regularity of $\zeta$ implies that $\zeta$-semistability and $\zeta$-stability coincide, so that we have
	\begin{equation*}
		\bar{\chi}_T(\mathbb{I}, \zeta) = \chi(\mathcal{M}^{\zeta-st}_{\mathbb{I}}(T)). 
	\end{equation*}
	Given this, \eqref{JKresW} will follow from the identity
	\begin{equation*}
		\lim_{\lambda \to +\infty} \operatorname{IR}_{0} (  \phi_{x_T}(\opZ_{\cN, \lambda\bar{R}}(d(k^1, k^2))(u)))  = \operatorname{IR}_{0}(\phi(\operatorname{W}_T(w ))) .
	\end{equation*}
Indeed, combining Propositions \ref{GHKTreeContribution} and \ref{SSTProp}, we see that we have 
		\begin{align*}
			\eps(T)\operatorname{IR}_0(\phi(\operatorname{W}_T(w))) = \operatorname{IR}_0\left(\prod_{i = 1}^{|Q_0|-1} m_i \left(1 - \frac{1}{v_i}\right)\right),
		\end{align*}
		if $T$ is stable, while $\eps(T)$ vanishes otherwise. On the other hand, we have
		\begin{equation*}
			\lim_{\lambda \to +\infty} \operatorname{IR}_{0} (  \phi_{x_T}(\opZ_{Q, \lambda\bar{R}}(\mathbb{I})(u)))  = \operatorname{IR}_{0}(\phi(\operatorname{W}_T(w ))),
		\end{equation*}
		by \eqref{LargeRResidue} and the the elementary identity
		\begin{equation*}
			\res_0 \left(1 - \frac{1}{v_i}\right)^{m_i} = \res_0 m_i \left(1 - \frac{1}{v_i}\right). 
		\end{equation*}
		By \eqref{SumOverStables}, this proves the required identity \eqref{JKresW}.  
	\end{proof}
	
	\begin{proof}[Proof of Corollary \ref{MainCor}] By Theorem \ref{MainThm} $(ii)$ we have, with our current sign convention for JK residues (see Remark \ref{JKSignRemark}) 
		\begin{equation*}
			c_d = c_{(P_1, P_2)} = \frac{1}{(P_1, P_2)!}\jk_{ab}(\opZ_{K(\ell_1, \ell_2)}(P_1, P_2), \zeta^*) = N[(P_1, P_2)].  
		\end{equation*}
		Recall the identity \eqref{GWKron}, 
		\begin{equation*}
			N[(P_1, P_2)] = (-1)^{(P_1, P_2)}\bar{\chi}_{K(\ell_1, \ell_2)}((P_1, P_2), \zeta^*).
		\end{equation*}
		Then, we find
		\begin{equation*}
			\frac{(-1)^{(P_1, P_2)}}{(P_1, P_2)!}\jk_{ab}(\opZ_{K(\ell_1, \ell_2)}(P_1, P_2), \zeta^*) = \bar{\chi}_{K(\ell_1, \ell_2)}((P_1, P_2), \zeta^*),
		\end{equation*}
		which coincides with the claim of Corollary \ref{MainCor}.
	\end{proof} 	
	
	\appendix
	
	\section{Iterated residues}\label{iterResSec}
	
	Let $D \subset \mathbb{C}$ be an open disc of radius $r$ centred at the origin. There is an obvious injective morphism of rings
	\begin{align*}
		\sigma_1\!: \text{Hol}(D^n) \rightarrow \text{Mer}(D^{n-1})[\![x_1]\!],\,\sigma_1 f := \sum_{j=0}^{+\infty}  \partial^j_{z_1}  f (0, z_2, \ldots, z_n) x_1^j,
	\end{align*}		
	Thus, by the universal property of localization, there is an induced morphism between the respective fields of fractions:
	\begin{align*}
		\sigma_1\!: \text{Mer}(D^n) \hookrightarrow \text{Mer}(D^{n-1})[\![x_1]\!]_{x_1}.
	\end{align*} 		
	We can define in the same way morphisms $\sigma_2, \ldots, \sigma_{n}$, which yield the extension of fields 
	\begin{align*}
		\sigma\!: \text{Mer}(D^n) \hookrightarrow \mathbb{C}[\![x_n]\!]_{x_n} \cdots [\![x_1]\!]_{x_1}.
	\end{align*}
	This is well defined at the level of germs of meromorphic functions $\M_0$ at $0 \in \C^n$. Fix $\alpha = (\alpha_2, \ldots, \alpha_n) \in D^{n-1}$ and set
	\begin{align}\label{eq1}
		f_\alpha (z) := f(z, \alpha_2, \ldots, \alpha_n).
	\end{align}	
	Let
	\begin{align*}
		l_\alpha := V(z_2-\alpha_2, \ldots, z_n-\alpha_n) \subset D^{n-1}.
	\end{align*}	
	Then, if $f$ is a meromorphic function such that $l_\alpha$ is not contained in the divisor of poles, the coefficients of the formal power series $\sigma_1 f$ evaluated at $\alpha$ equal the coefficients of $\sigma_1 \big( f_\alpha \big)$.		
	\begin{definition} The usual residue map,	$\operatorname{Res}_{x = 0}\!: \C[\![x]\!]_x \longrightarrow \C$, picks the coefficient of $x^{-1}$. Fix $n \in \Z_{>0}$ and consider the composition 
		\begin{align*}
			\mathbb{C}[\![x_n]\!]_{x_n}\cdots[\![x_1]\!]_{x_1} & \xrightarrow{\text{Res}_{x_1 = 0}} \mathbb{C}[\![x_n]\!]_{x_n}\cdots[\![x_{2}]\!]_{x_{2}} \xrightarrow{\text{Res}_{x_2 = 0}} \cdots\\
			\cdots &\xrightarrow{\text{Res}_{x_{n-1}= 0}} \mathbb{C}[\![x_n]\!]_{x_n} \xrightarrow{\text{Res}_{x_n = 0}} \mathbb{C}.
		\end{align*}
		This yields a $\mathbb{C}$-linear morphism, the iterated residue
		\begin{align*}
			\text{IR}_0\!: \M_0 \rightarrow \mathbb{C}.
		\end{align*}
	\end{definition}
	\begin{prop}\label{ResidueAndChangeOfVariables}
		For every biholomorphism of the form
		\begin{align*}
			G\!: V \rightarrow W,\,  G(z_1, \ldots, z_n) := (g_1(z_1), \ldots, g_n(z_n))
		\end{align*}
		where $V, W \subset \mathbb{C}^n$ are open and $0 \in W$, we have  
		\begin{align*}
			\operatorname{IR}_0 (f) = \operatorname{IR}_0 \left( (f \circ (G - G(0)) \cdot \det (J_G)\right)
		\end{align*}
		for all $f \in \mathcal{M}_0$.
	\end{prop}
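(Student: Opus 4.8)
The plan is to deduce the formula from the classical one-variable change of variables for residues, by induction on $n$. First I would reduce to the case $G(0)=0$: since $G-G(0)$ is again diagonal, of the form $(g_1-g_1(0),\dots,g_n-g_n(0))$, is a germ of biholomorphism at the origin fixing the origin, and satisfies $\det J_{G-G(0)}=\det J_G$, the assertion is equivalent to $\operatorname{IR}_0(f)=\operatorname{IR}_0\big((f\circ\tilde G)\cdot\det J_{\tilde G}\big)$ for $\tilde G$ a diagonal biholomorphism germ fixing $0$. So from now on I assume $g_i(0)=0$, whence $g_i(z_i)=c_i z_i+O(z_i^2)$ with $c_i\neq 0$ and $g_i'\in\C\pow{z_i}$. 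The base case $n=1$ is the elementary identity $\res_{z=0}\big(f(g(z))\,g'(z)\big)=\res_{z=0}f$: expanding $f=\sum_k a_k z^k$ and using linearity, one is reduced to $\res_{z=0}\big(g(z)^k g'(z)\big)$, which vanishes for $k\neq -1$ because $g^kg'=\tfrac{1}{k+1}(g^{k+1})'$ is a derivative of a Laurent series, and equals $1$ for $k=-1$ because $\operatorname{ord}_0 g=1$, so $g'/g=1/z+(\text{holomorphic})$.

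For the inductive step I would push everything through the embedding $\sigma$ of Appendix \ref{iterResSec}. Writing $\sigma_1 f=\sum_k b_k\,x_1^k$ with $b_k\in\operatorname{Mer}(D^{n-1})$, the key point is that $\sigma$ is compatible with the substitutions $z_i\mapsto g_i(z_i)$: composing $f$ with $G$ corresponds to substituting the power series $g_1(x_1)$ for $x_1$ in the $x_1$-powers and $g_i(x_i)$ for $x_i$ inside the coefficients, and multiplying by $\det J_G=\prod_i g_i'(x_i)$. Now apply $\res_{x_1=0}$: the factor $\prod_{i\geq 2} g_i'(x_i)$ and the transformed coefficients do not involve $x_1$, so they pull out of $\res_{x_1=0}$, and the one-variable computation above applied to $g_1$ over the coefficient ring leaves exactly $\sigma^{(n-1)}$ of $(b_{-1}\circ G')\cdot\det J_{G'}$, where $G'=(g_2,\dots,g_n)$ and $b_{-1}\in\operatorname{Mer}(D^{n-1})$. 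Applying the remaining residues $\res_{x_n}\circ\cdots\circ\res_{x_2}$, the left-hand side becomes $\operatorname{IR}_0^{(n-1)}\big((b_{-1}\circ G')\cdot\det J_{G'}\big)$, which by the inductive hypothesis applied to the $(n-1)$-variable germ $b_{-1}$ and the diagonal biholomorphism $G'$ equals $\operatorname{IR}_0^{(n-1)}(b_{-1})=\res_{x_n}\circ\cdots\circ\res_{x_1}\big(\sigma^{(n)}f\big)=\operatorname{IR}_0(f)$. This closes the induction, and together with the first reduction proves the Proposition.

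The only delicate point, which I would make explicit, is the bookkeeping around $\sigma$: that substituting a power series $c_i x_i+O(x_i^2)$ with $c_i\neq 0$ for $x_i$ is a well-defined automorphism of the iterated Laurent series field fixing the remaining variables; that $\sigma$ intertwines such substitutions with the geometric change of coordinates $z_i\mapsto g_i(z_i)$; and that $\res_{x_j=0}$ is $\C\pow{x_n}_{x_n}\cdots\pow{x_{j+1}}_{x_{j+1}}$-linear, so it can be moved past the substitution in $x_i$ and past multiplication by the factors of $\det J_G$ in the other variables whenever $j\neq i$. Everything else is the one-variable residue identity used once per variable. I expect this formal bookkeeping to be the main — though entirely routine — obstacle; there is no analytic subtlety, since $\operatorname{IR}_0$ is defined purely algebraically on $\M_0$.
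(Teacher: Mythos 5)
Your proof is correct, and it follows the same inductive scheme as the paper's (reduce to $G(0)=0$, then treat one variable at a time); the genuine difference is in how the one-variable change-of-variables identity is established. The paper argues analytically: for a generic slice $\alpha$, it writes $2\pi{\rm i}\,\operatorname{Res}_{x_1=0}\sigma_1(f_\alpha)=\int_{\gamma_\alpha} f_\alpha\,dz$ by Cauchy's residue theorem, performs the change of variables $z_1\mapsto g_1(z_1)$ under the integral sign, and reinterprets the result as $\operatorname{Res}_{z_1=0}\sigma_1((f_1)_\alpha)$; this transfers the whole computation to a restricted meromorphic function $f_\alpha$ of one variable and never discusses formal substitution of power series inside $\sigma$. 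You instead prove the one-variable identity $\operatorname{Res}_{z=0}\big(f(g(z))g'(z)\big)=\operatorname{Res}_{z=0}f$ purely algebraically: for $k\neq -1$ one has $g^kg'=\tfrac{1}{k+1}(g^{k+1})'$, whose residue vanishes, while $g'/g=1/z+(\text{holomorphic})$ gives the $k=-1$ case; you then push this through $\sigma$ over the coefficient field $\operatorname{Mer}(D^{n-1})$. The trade-off is precisely the bookkeeping you flag: you need that $\sigma$ intertwines $(f\circ G)\det J_G$ with formal substitution of $g_i(x_i)$ and multiplication by $\prod_i g_i'(x_i)$; that $\operatorname{Res}_{x_1=0}$ commutes with the infinite sum $\sum_k (b_k\circ G')\,g_1(x_1)^kg_1'(x_1)$ (which holds because $\operatorname{ord}_{x_1}\big(g_1^kg_1'\big)=k$, so each $x_1$-coefficient receives only finitely many contributions); and that $\operatorname{Res}_{x_1=0}$ commutes with $\sigma_j$ for $j\geq 2$. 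These are indeed routine, and your route has the advantage of being entirely formal and analysis-free, which matches the algebraic definition of $\operatorname{IR}_0$; the paper's route avoids formalizing substitution at the modest cost of the remark (just before the Proposition) on evaluating coefficients of $\sigma_1 f$ along generic lines $l_\alpha$. Both proofs are sound.
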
	
	\begin{proof} We may assume that $G(0)=0$ and that $V$ and $W$ are polydiscs, $V = \prod_{i=1}^n V_i$, $W = \prod_{j=1}^n W_j$, with $f$ meromorphic on $W$. Then, there is an open dense subset $U \subset \prod_{j=2}^n W_i$ such that, for all $\alpha \in U$, the line $l_\alpha$ is not contained in the divisor of poles of $f$. Thus, for $\alpha \in U$, $f_\alpha(z)$ is a meromorphic function of one complex variable, so we have
		\begin{align*}
			2 \pi {\rm i} \text{Res}_{x = 0} \left(\sigma_1 (f_\alpha)\right) = \int_{\gamma_\alpha} f_\alpha(z) dz
		\end{align*}
		where $\gamma_\alpha$ is a loop around the origin not containing other singularities of $f_\alpha$. As a consequence,
		\begin{align*}
			&2 \pi {\rm i} \text{Res}_{x_1 = 0} (\sigma_1 (f))(\alpha) 
			= 2 \pi {\rm i} \text{Res}_{x_1 = 0} (\sigma_1 (f_\alpha)) 
			= \int_{\gamma_\alpha} f_\alpha dz\\
			= &\int_{g_1^{-1}(\gamma_\alpha)} (f_\alpha \circ g_1) g_1^\prime dz = \int_{g_1^{-1}(\gamma_\alpha)} (f_1)_\alpha(z) dz
			= 2 \pi {\rm i} \text{Res}_{x_1 = 0} \left(\sigma_1(f_1)\right)(\alpha) 
		\end{align*}
		where $f_1\!: V_1 \times \prod_{j=2}^n W_j \dashrightarrow \mathbb{C}$ is defined by
		\begin{align*}
			f_1(z_1, x_2, \ldots, x_n) := f(g_1(z_1), x_2, \ldots, x_n) g_1^\prime(z_1).
		\end{align*}
		This proves the identity of meromorphic functions on $\prod_{j=2}^n W_j$
		\begin{align*}
			\text{Res}_{x_1 = 0} (\sigma_1 (f)) = \text{Res}_{z_1 = 0} \left(\sigma_1(f_1)\right),
		\end{align*}
		since the equality holds on the dense open subset $U \subset \prod_{j=2}^n W_j$. Inductively, we obtain
		\begin{align*}
			&\operatorname{Res}_{x_n=0}\left(\sigma_n\left(\cdots\text{Res}_{x_2 = 0} \left(\sigma_2\left( \text{Res}_{x_1 = 0} (\sigma_1 (f))\right)\right)\cdots\right)\right)\\ &= \operatorname{Res}_{z_n=0}\left(\sigma_n\left(\cdots\text{Res}_{z_2 = 0} \left(\sigma_2\left( \text{Res}_{z_1 = 0} \left(\sigma_1(f_n)\right)\right)\right)\cdots\right)\right),
		\end{align*} 
		where $f_n\!: V_1 \times V_2 \times \cdots \times V_n \to \mathbb{C}$ is given by
		\begin{align*}
			f_n(z_1, z_2, \ldots, z_n) &:= f(g_1(z_1), g_2(z_2), \ldots, g_n(z_n)) g_1^\prime(z_1) \cdots g_n^\prime(z_n)\\
			&= (f \circ G) \text{det}(J_G).
		\end{align*}		
		The claim follows.
	\end{proof}	
	For our applications, we also need to show that iterated residues respect uniform convergence (near the origin). The case of one variable is given by the following result.
	\begin{lem}
		Let $f_n \in \operatorname{Mer}(D)$ and assume that there is an annulus $A:= A_0(r,R) \subset D$ around the origin such that, for every $n \in \mathbb{N}$, if $f_n$ has a singularity inside the disc $D_0(R)$ then it is a pole at $0$. If we have uniform convergence
		\begin{align*}
			(f_n)_{\vert A} \xrightarrow{n \rightarrow \infty} f_{\vert A}
		\end{align*}
		for some $f \in \operatorname{Mer}(D)$, then
		\begin{align*}
			\lim_{n \rightarrow \infty} \operatorname{Res}_0(f_n) = \operatorname{Res}_0(f)
		\end{align*}
	\end{lem}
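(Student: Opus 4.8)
The plan is to reduce the statement to the interchange of a limit with a contour integral, by writing the residue at the origin as an integral over a circle that lies inside the annulus $A$ where the uniform convergence takes place.

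First I would fix a radius $\rho$ with $r < \rho < R$ and let $\gamma_\rho$ denote the positively oriented circle $\{|z| = \rho\}$, so that $\gamma_\rho \subset A$. For each $n$, the hypothesis guarantees that $f_n$ is holomorphic on the punctured disc $\{0 < |z| < R\}$, its only possible singularity there being a pole at $0$; hence the Laurent expansion of $f_n$ about $0$ converges on this whole punctured disc, and extracting the coefficient of $z^{-1}$ — equivalently, deforming $\gamma_\rho$ to a small loop around $0$ through the region where $f_n$ is holomorphic — gives
\[
\operatorname{Res}_0(f_n) = \frac{1}{2\pi {\rm i}} \int_{\gamma_\rho} f_n(z)\,dz .
\]
The same formula holds with $f_n$ replaced by $f$: being a uniform limit on $A$ of the holomorphic functions $f_n|_A$, the function $f$ is holomorphic on $A$, and — as is the case in every application of this Lemma, where $A$ is chosen inside the regular locus of the relevant hyperplane arrangement — its singularities inside $D_0(R)$ likewise reduce to a pole at the origin, so that $\operatorname{Res}_0(f) = \frac{1}{2\pi {\rm i}} \int_{\gamma_\rho} f(z)\,dz$.

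With both residues expressed as integrals over the fixed compact circle $\gamma_\rho \subset A$, the conclusion is immediate: uniform convergence of $f_n$ to $f$ on $\gamma_\rho$ permits passing to the limit under the integral sign,
\[
\lim_{n \to \infty} \operatorname{Res}_0(f_n) = \lim_{n \to \infty} \frac{1}{2\pi {\rm i}} \int_{\gamma_\rho} f_n(z)\,dz = \frac{1}{2\pi {\rm i}} \int_{\gamma_\rho} f(z)\,dz = \operatorname{Res}_0(f).
\]
Equivalently, one can phrase the middle step in terms of Laurent coefficients: uniform convergence on $A$ forces the Laurent coefficients of $f_n$ on $A$ to converge term by term to those of $f$, and the coefficient of $z^{-1}$ is precisely the residue in question.

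The one point that requires care — and the part I would expect to be the main obstacle to a completely formal argument — is the identification $\operatorname{Res}_0(f) = \tfrac{1}{2\pi {\rm i}}\int_{\gamma_\rho} f(z)\,dz$. This is where it matters that $f$, and not only the $f_n$, has no singularities inside $D_0(R)$ other than a possible pole at the origin: without this, the integral over $\gamma_\rho$ would also collect the residues of $f$ at intermediate poles and would fail to equal $\operatorname{Res}_0(f)$. In the situations where the Lemma (and its multivariable counterpart) is used this holds automatically, so no extra work is needed; everything else in the argument is routine, and the multivariable generalisation then follows by iterating this one-variable statement in each residue variable in turn.
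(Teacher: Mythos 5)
The paper gives no proof of this lemma, only remarking that it is ``a straightforward application of the residue theorem,'' and your argument is precisely that standard argument: represent both residues as contour integrals over a fixed circle $\gamma_\rho \subset A$ and pass to the limit under the integral sign using uniform convergence on the compact set $\gamma_\rho$. So the approach is the intended one and the argument is correct.

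The one substantive point you raise deserves emphasis, since it identifies a tacit hypothesis. As written, the lemma constrains the singularities of the $f_n$ inside $D_0(R)$ but says nothing about those of $f$; yet the step $\operatorname{Res}_0(f) = \frac{1}{2\pi\mathrm{i}}\int_{\gamma_\rho} f$ requires $f$ itself to have at most a pole at the origin inside $D_0(\rho)$, since otherwise the integral would also pick up residues at intermediate poles. Without this the conclusion can actually fail: taking $f(z)=(z-a)^{-1}$ with $0<|a|<r$ and letting $f_n$ be the partial sums of the Laurent expansion of $f$ on $\{|z|>|a|\}$ (so each $f_n$ is a polynomial in $1/z$, with a pole only at $0$, converging uniformly to $f$ on $A_0(r,R)$ for any $R$), one gets $\operatorname{Res}_0(f_n)=1$ for all $n$ while $\operatorname{Res}_0(f)=0$. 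Thus the statement implicitly assumes the same regularity for the limit $f$; as you correctly observe, this holds in every application in the paper (and likewise for Proposition \ref{UniformConvIteratedRes}, whose statement has the same feature), so the omission is harmless there, but your flagging of it is exactly right.
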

	We omit the proof, a straightforward application of the residue theorem.
	\begin{prop}\label{UniformConvIteratedRes}
		Consider a sequence $f_m \in \operatorname{Mer}(D^n)$ whose  divisor of poles is contained in a union of hyperplanes passing through the origin. Assume that there is an annulus $A:= A_0(r,R) \subset D$ such that we have uniform convergence
		\begin{align*}
			(f_m)_{\vert A^n} \xrightarrow{m \rightarrow \infty} f_{\vert A^n}
		\end{align*}
		for some $f \in \operatorname{Mer}(D^n)$. Then
		\begin{align*}
			\lim_{m \rightarrow +\infty} \operatorname{IR}_0 (f_m) = \operatorname{IR}_0 (f).
		\end{align*}
	\end{prop}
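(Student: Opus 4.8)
The plan is to argue by induction on $n$. The base case $n=1$ is exactly the one‑variable statement just proved: in $\mathbb{C}$ a hyperplane through the origin is the point $\{0\}$, so the hypothesis says precisely that each $f_m$ has at most a pole at the origin inside $D$, and $\operatorname{IR}_0=\operatorname{Res}_0$ in this case.

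For the inductive step, I would isolate the first partial residue: set $g_m:=\operatorname{Res}_{x_1=0}(\sigma_1(f_m))$ and $g:=\operatorname{Res}_{x_1=0}(\sigma_1(f))$, which lie in $\operatorname{Mer}(D^{n-1})$ and which satisfy $\operatorname{IR}_0(f_m)=\operatorname{IR}_0(g_m)$, $\operatorname{IR}_0(f)=\operatorname{IR}_0(g)$ directly from the definition of the iterated residue (the extension $\sigma$ factors through $\sigma_1$ followed by the coordinate‑wise extension of the $(n-1)$‑variable $\sigma$). It then suffices to check two things, so that the inductive hypothesis applies to $g_m\to g$: (a) the polar divisor of $g_m$ (and of $g$) is again contained in a finite union of hyperplanes through the origin, now in $\mathbb{C}^{n-1}$; and (b) $g_m\to g$ uniformly on a product of annuli $(A_0(r',R'))^{n-1}$ for suitable $0<r'<R'$.

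To establish (a) and (b) I would proceed as follows. Write the (finite) union of pole hyperplanes as $\bigcup_i V(L_i)$, with $L_i=c_{i1}z_1+\ell_i(z_2,\dots,z_n)$, $\ell_i$ linear and $\ell_i(0)=0$, and fix $N$ large enough that $q:=\prod_i L_i^{N}$ clears the denominators of all the $f_m$ and of $f$; then $h_m:=qf_m$, $h:=qf$ are holomorphic on $D^n$. Since the $V(L_i)$ are disjoint from $A^n$, the function $q$ is bounded and bounded away from zero on any torus $\{|z_j|=\rho_j\}\subset A^n$, so $h_m\to h$ uniformly there; by the maximum principle for polydiscs this convergence is uniform on $\{|z_j|\le\rho_j\}$, and by the Cauchy estimates each $\partial_{z_1}^{k}h_m(0,\cdot)\to\partial_{z_1}^{k}h(0,\cdot)$ uniformly on a polydisc $\{|z'_j|\le\rho'\}$ with $\rho'<\min_j\rho_j$. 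If $V(z_1)$ is not one of the $V(L_i)$ then $z_1=0$ is not a pole and $g_m\equiv g\equiv 0$; otherwise a standard residue‑at‑$z_1=0$ computation expresses $g_m(z')$ as a fixed finite polynomial combination of the $\partial_{z_1}^{k}h_m(0,z')$ whose coefficients are fixed rational functions of $z'$ with poles only along the hyperplanes $V(\ell_i)\subset\mathbb{C}^{n-1}$ (these pass through the origin). This gives (a) at once, and, since the coefficient functions are bounded on compacta away from $\bigcup_i V(\ell_i)$, it gives uniform convergence $g_m\to g$ on every compact subset of $\{|z'_j|<\rho'\}$ disjoint from $\bigcup_i V(\ell_i)$; choosing $A'=A_0(r',R')$ with $R'\le\rho'$ and $R'/r'$ small enough provides the product of annuli required for (b) whenever each $V(\ell_i)$ can be separated from such a thin annulus‑product.

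The main obstacle is exactly the last point in (b): after the first residue the new pole hyperplanes $V(\ell_i)$ need no longer be coordinate hyperplanes, and need no longer be disjoint from any product of annuli (a hyperplane such as $V(z_2+z_3+z_4)$ meets every $(A_0(r',R'))^{3}$), so one cannot always re‑enter the hypotheses of the Proposition in dimension $n-1$ with a round annulus‑product. To handle this I would either phrase the inductive statement so as to allow independent annuli $A_0(r_j,R_j)$ in the different coordinates — which can always be arranged, after shrinking, to avoid the finitely many $V(\ell_i)$ while still carrying the uniform convergence produced above — or else observe that whenever such an unavoidable hyperplane occurs the subsequent residue $\operatorname{Res}_{x_k=0}$ annihilates it, so that the recursion terminates with $0$ on both sides. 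In the situations where the Proposition is applied the $V(L_i)$ are coordinate hyperplanes (the basis $\mathfrak{A}_x$ of Corollary \ref{JKBasisCor}), so $V(z_1)$‑type poles reduce to $V(z_j)$‑type poles, $A'=A$ works throughout, and the argument collapses to the clean induction sketched above.
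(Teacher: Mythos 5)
Your plan shares the paper's skeleton (induction on $n$, the one–variable Lemma as the base case, and the partial residue $g_m=\operatorname{Res}_{x_1=0}(\sigma_1 f_m)$ as the inductive device), but fills in the inductive step by a genuinely different and more careful route. The paper gets uniform convergence $g_m\to g$ on $A^{n-1}$ in one stroke, by identifying $g_m(y)-g(y)$ with $\frac{1}{2\pi {\rm i}}\oint_C (f_m-f)(z_1,y)\,dz_1$ for a fixed circle $C\subset A$ and estimating. That identity requires that, for $y\in A^{n-1}$, the only pole of $(f_m-f)(\cdot,y)$ inside $C$ be $z_1=0$. This is automatic when the $V(L_i)$ are coordinate hyperplanes (which, after the change of variables $\phi_x$ taking the basis $\mathfrak{A}_x$ to the standard one, is the only case that actually arises in the paper), but it fails for a general linear form through the origin: for $L=z_1+\eps(z_2-z_3)$ with $\eps$ small, the circle $C\subset A$ also encloses the pole $z_1=-\eps(z_2-z_3)$, and, worse, $g_m$ then has a pole along $V(z_2-z_3)$, which meets $A^{n-1}$, so uniform convergence on the round polyannulus $A^{n-1}$ cannot hold at all. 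Your argument — clear denominators to pass to holomorphic $h_m=qf_m$, push uniform convergence from the torus to a closed polydisc by the maximum principle, deduce uniform convergence of $\partial^k_{z_1}h_m(0,\cdot)$ by the Cauchy estimates, and write $g_m$ as a fixed rational expression in these — is a more elementary and more general way to get both that the poles of $g_m$ lie on hyperplanes through the origin and that $g_m\to g$ uniformly on a suitable region, and you correctly isolate the real obstacle (the new hyperplanes $V(\ell_i)$ may be unavoidable by any $A_0(r',R')^{n-1}$) together with the right fix: strengthen the inductive statement to allow independent annuli $\prod_j A_0(r_j,R_j)$ with well–separated radii, which can always avoid the finitely many $V(\ell_i)$.

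Two caveats. Your second proposed fix — that an unavoidable hyperplane is ``annihilated'' by subsequent residues so that both sides vanish — is not justified (and not needed); stick with the independent–annuli strengthening, which is provably sufficient. Second, like the paper you implicitly treat the union $\bigcup_i V(L_i)$ as fixed independently of $m$; this is essential, since the statement is false otherwise: for $f_m=1/\bigl(z_1(z_1+\eps_m z_2)\bigr)$ with $\eps_m\to 0$, one has $f_m\to 1/z_1^2$ uniformly on $A^2$ once $\eps_m<r/R$, yet $\operatorname{IR}_0(f_m)=1/\eps_m\to\infty$ while $\operatorname{IR}_0(1/z_1^2)=0$. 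Your closing remark — that in the two places the Proposition is invoked (the Lemma on the $z\to 0$ limit and the large $R$–charge limit \eqref{LargeRResidue}) the pole hyperplanes become coordinate after $\phi_x$, so $A'=A$ works and the induction is clean — is exactly right, and explains why the paper's terser argument suffices in context.
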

	\begin{proof} We work by induction on $n$. The base case $n=1$ follows from the previous Lemma. Assume now the claim holds for $n-1$. Then we can consider the new sequence $g_m \in \operatorname{Mer}(D^{n-1})$, given by
		\begin{align*}
			g_m(y) := \text{Res}_{x_1=0} \circ \sigma_1 (f_{m}(x_1, y)),
		\end{align*}
		and, similarly,  
		\begin{align*}
			g(y) := \text{Res}_{x_1=0} \circ \sigma_1 (f (x_1, y)).
		\end{align*}
		These are meromorphic functions, because $g_m$ is the coefficient of $x_1^{-1}$ in the Laurent expansion of $f_m$ with respect to $x_1$, and similarly for $g$. Moreover, if they have poles, then these lie on some hyperplanes through the origin, for the same reason. By the residue theorem, we find
		\begin{align*}
			\Vert g_m - g\Vert_{A^{n-1}, \infty} \leq \frac{1}{2\pi i} \int_C \Vert f_m - f \Vert_{A^n,\infty} = l(C)\Vert f_m - f \Vert_{A^n,\infty} \rightarrow 0
		\end{align*}
		Therefore $g_m \rightarrow g$ uniformly on $A^{n-1}$. Then we can apply the inductive assumption to $g_m$ and $g$, obtaining the claim.
	\end{proof}

\label{lastpage}

\end{document}